\def\spec#1#2{\medskip\noindent\textbf{#1.} \emph{#2}}
\newtheorem{theorem}{Theorem}[section] 
\newtheorem{lemma}[theorem]{Lemma}     
\newtheorem{corollary}[theorem]{Corollary}
\newtheorem{proposition}[theorem]{Proposition}
\theoremstyle{definition}
\newtheorem{definition}[theorem]{Definition}
\newtheorem{remark}[theorem]{Remark}
\newtheorem*{remark*}{Remark} 
\newtheorem*{question}{Question} 
\numberwithin{equation}{section}
\title[Poincar{\'e} duality complexes in dimension four]
{Poincar{\'e} duality complexes in dimension four} 
\author{Hans Joachim Baues and Beatrice Bleile}
\begin{document}

\begin{abstract}

We describe an algebraic structure on chain complexes yielding algebraic models which classify homotopy types of ${\rm PD}^4$--complexes. Generalizing Turaev's fundamental triples of ${\rm PD}^3$--complexes we introduce fundamental triples for ${\rm PD}^n$--complexes and show that two ${\rm PD}^n$--complexes are orientedly homotopy equivalent if and only if their fundamental triples are isomorphic. As applications we establish a conjecture of Turaev and obtain a criterion for the existence of degree $1$ maps between $n$--dimensional manifolds.

\end{abstract}

\maketitle

\section*{Introduction}
In order to study the homotopy types of closed manifolds, Browder and Wall introduced the notion of Poincar{\'e} duality complexes. A Poincar{\'e} duality complex, or ${\rm PD}^n$--complex, is a ${\rm CW}$--complex, $X$, whose cohomology satisfies a certain algebraic condition. Equivalently, the chain complex, $\widehat C(X)$, of the universal cover of $X$ must satisfy a corresponding algebraic condition. Thus Poincar{\'e} complexes form a mixture of topological and algebraic data and it is an old quest to provide purely algebraic data determining the homotopy type of ${\rm PD}^n$--complexes. This has been achieved for $n=3$, but, for $n=4$, only partial results are available in the literature.

Homotopy types of $3$--manifolds and ${\rm PD}^3$--complexes were considered by Thomas \cite{Thomas}, Swarup \cite{Swarup} and Hendriks \cite{Hendriks}. The homotopy type of a ${\rm PD}^3$--complex, $X$, is determined by its \emph{fundamental triple}, consisting of the fundamental group, $\pi = \pi_1(X)$, the orientation character, $\omega$, and the image in ${\rm H}_3(\pi, \mathbb Z^{\omega})$ of the fundamental class, $[X]$. Turaev \cite{Turaev} provided an algebraic condition for a triple to be realizable by a ${\rm PD}^3$--complex. Thus, in dimension $3$, there are purely algebraic invariants which provide a complete classification.

Using primary cohomological invariants like the fundamental group, characteristic classes and intersection pairings, partial results were obtained for $n=4$ by imposing conditions on the fundamental group. For example, Hambleton, Kreck and Teichner classified ${\rm PD}^4$--complexes with finite fundamental group having periodic cohomology of dimension $4$ (see \cite{HamblKreck}, \cite{Teichner} and \cite{HamblKreckTeich}). Cavicchioli, Hegenbarth and Piccarreta studied ${\rm PD}^4$--complexes with free fundamental group (see \cite{CavHeg} and \cite{HegPic}), as did Hillman \cite{Hillman3}, who also considered ${\rm PD}^4$--complexes with fundamental group a ${\rm PD}^3$--group \cite{Hillman2}. Recently, Hillman \cite{Hillman1} considered homotopy types of ${\rm PD}^4$--complexes whose fundamental group has cohomological dimension $2$ and one end. 

It is doubtful whether primary invariants are sufficient for the homotopy classification of ${\rm PD}^4$--complexes in general and we thus follow Ranicki's approach (\cite{Ranicki1} and \cite{Ranicki2}) who assigned to each ${\rm PD}^n$--complex, $X$, an \emph{algebraic Poincar{\'e} duality complex} given by the chain complex, $\widehat C(X)$, together with a \emph{symmetric} or \emph{quadratic structure}. However, Ranicki considered neither the realizability of such algebraic Poincar{\'e} duality complexes nor whether the homotopy type of a ${\rm PD}^n$--complex is determined by the homotopy type of its algebraic Poincar{\'e} duality complex.

This paper presents a structure on chain complexes which completely classifies ${\rm PD}^4$--complexes up to homotopy. The classification uses \emph{fundamental triples} of ${\rm PD}^4$--complexes, and, in fact, the chain complex model yields algebraic conditions for the realizability of fundamental triples.

Fundamental triples  of formal dimension $n \geq 3$ comprise an $(n-2)$--type $T$, a homomorphism $\omega: \pi_1(T) \rightarrow \mathbb Z / 2 \mathbb Z$ and a homology class $t \in {\rm H}_n(T, \mathbb Z^{\omega})$. There is a functor, 
\begin{equation*}
\tau_+: {\rm \bf PD}^n_+ \longrightarrow {\rm\bf Trp}^n_+,
\end{equation*}
from the category ${\rm \bf PD}^n_+$ of ${\rm PD}^n$--complexes and maps of degree one to the category ${\rm\bf Trp}^n_+$ of triples and morphisms inducing surjections on fundamental groups. Our first main result is

\spec{Theorem \ref{mainthm}}{The functor $\tau_+$ reflects isomorphisms and is full  for $n \geq 3$.}

\spec{Corollary \ref{class}}{Take $n \geq 3$. Two closed $n$--dimensional manifolds or two ${\rm PD}^n$--complexes, respectively, are orientedly homotopy equivalent if and only if their fundamental triples are isomorphic.}

\medskip

Corollary \ref{class} extends results of Thomas \cite{Thomas}, Swarup \cite{Swarup} and Hendriks \cite{Hendriks} for dimension $3$ to arbitrary dimension and establishes Turaev's conjecture \cite{Turaev} on ${\rm PD}^n$--complexes whose $(n-2)$--type is an Eilenberg--{Mac\,Lane} space $K(\pi_1X,1)$. Corollary \ref{class} is even of interest in the case of simply connected or highly connected manifolds. 

Theorem \ref{mainthm} also yields a criterion for the existence of a map of degree one between ${\rm PD}^n$--complexes, recovering Swarup's result for maps between $3$--manifolds and Hendriks' result for maps between ${\rm PD}^3$--complexes. 

In the oriented case, special cases of Corollary \ref{class} were proved by Hambleton and Kreck \cite{HamblKreck} and Cavicchioli and Spaggiari \cite{CavSpag}. In fact, in \cite{HamblKreck}, Corollary 3.2 is obtained under the condition that either the fundamental group is finite or the second rational homology of the 2--type is non--zero. Corresponding conditions were used in \cite{CavSpag} for oriented ${\rm PD}^{2n}$--complexes with $(n-1)$--connected universal covers, and Teichner extended the approach of \cite{HamblKreck} to the non--oriented case in his thesis \cite{Teichner}. Our result shows that the conditions on finiteness and rational homology used in these papers are not necessary.  

It follows directly from Poincar{\'e} duality and Whitehead's Theorem that the functor $\tau_+$ reflects isomorphisms. To show that $\tau_+$ is full requires work. Given ${\rm PD}^n$--complexes $Y$ and $X$, $n \geq 3$, and a morphism $f: \tau_+ Y \rightarrow \tau_+ X$ in ${\rm\bf Trp}^n_+$, we first construct a chain map $\xi: \widehat C(Y) \rightarrow \widehat C(X)$  preserving fundamental classes, that is, $\xi_{\ast}[Y] = [X]$. Then we use the category ${\rm\bf H}^{k+1}_c$ of homotopy systems of order $(k+1)$ introduced in \cite{Baues1} to realize $\xi$ by a map $\overline f: Y \rightarrow X$ with $\tau_+(\overline f) = f$. 

Our second main result describes algebraic models of homotopy types of ${\rm PD}^4$--complexes. We introduce the notion of \emph{${\rm PD}^n$--chain complex} and show that ${\rm PD}^3$--chain complexes are equivalent to ${\rm PD}^3$--complexes up to homotopy. In Section \ref{4chainrealize} we show that ${\rm PD}^4$--chain complexes classify homotopy types of ${\rm PD}^4$--complexes up to $2$--torsion. In particular, we obtain

\spec{Theorem \ref{finodd}}{The functor $\widehat C$ induces a 1--1 correspondence between homotopy types of ${\rm PD}^4$--complexes with finite fundamental group of odd order and homotoppy types of ${\rm PD}^4$--chain complexes with homotopy co--commutative diagonal and finite fundamental group of odd order.}

\medskip

To obtain a complete homotopy classification of ${\rm PD}^4$--complexes, we study the chain complex of a $2$--type in Section \ref{2typechains}. We compute this chain complex  up to dimension $4$  in terms of Peiffer commutators in pre--crossed modules. This allows us to introduce ${\rm PD}^4$--chain complexes together with a $\beta$--invariant, and we prove

\spec{Corollary \ref{PD4cor}}{The functor $\widehat C$ induces a 1--1 correspondence between homotopy types of ${\rm PD}^4$--complexes and homotopy types of $\beta$--${\rm PD}^4$--chain complexes.}

\medskip

Corollary \ref{PD4cor} highlights the crucial r\^{o}le of Peiffer commutators for the homotopy classification of $4$--manifolds.

The proofs of our results rely on the obstruction theory \cite{Baues1} for the realizability of chain maps which we recall in Section \ref{homsys}.
\section{Chain complexes}\label{chaincomp}
Let $X^n$ denote the $n$--skeleton of the CW--complex $X$. We call $X$ reduced if $X^0 = \ast$ is the base point. The objects of the category ${\rm\bf{CW}}_0$ are reduced CW--complexes $X$ with universal covering $p: \widehat X \rightarrow X$, such that $p(\widehat  \ast) = \ast$, where $\widehat \ast \in  \widehat X^0$ is the base point of $\widehat X$. Here the $n$--skeleton of $\widehat X$ is $\widehat X^n = p^{-1}(X^n)$. Morphisms in ${\rm\bf{CW}}_0$ are cellular maps $f: X \rightarrow Y$ and homotopies in $\rm\bf CW_0$ are base point preserving. A map $f: X \rightarrow Y$ in $\rm\bf CW_0$ induces a unique covering map $\widehat f: \widehat X \rightarrow \widehat Y$ with $\widehat f(\widehat \ast) = \widehat \ast$, which is equivariant with respect to $\varphi = \pi_1(f)$. 

We consider pairs $(\pi, C)$, where $\pi$ is a group and $C$ a chain complex of left modules over the group ring $\mathbb Z [\pi]$. We write $\Lambda = \mathbb Z [\pi]$ and $C$ for $(\pi, C)$, whenever $\pi$ is understood. We call $(\pi, C)$ free if each $C_n, n \in \mathbb Z$, is a free $\Lambda$--module. Let $\rm aug: \Lambda \rightarrow \mathbb Z$ be the augmentation homomorphism, defined by ${\rm aug} (g) = 1$ for all $g \in \pi$. Every group homomorphism, $\varphi: \pi \rightarrow \pi'$, induces a ring homomorphism $\varphi_{\sharp}: \Lambda \rightarrow \Lambda'$, where $\Lambda' = \mathbb Z[\pi']$. A chain map is a pair $(\varphi, F): (\pi, C) \rightarrow (\pi', C')$, where $\varphi$ is a group homomorphism and $F: C \rightarrow C'$ a $\varphi$--equivariant chain map, that is a chain map of the underlying abelian chain complexes, such that $F(\lambda c) = \varphi_{\sharp}(\lambda) F(c)$ for $\lambda \in \Lambda$ and $c \in C$. Two such chain maps are homotopic, $(\varphi, F) \simeq (\psi, G)$ if $\varphi = \psi$ and if there is a $\varphi$--equivariant map $\alpha: C \rightarrow C'$ of degree $+1$ such that $G - F = d \alpha + \alpha d$.

A pair $(\pi, C)$ is a reduced chain complex if $C_0 = \Lambda$ with generator $\ast$, $C_i = 0$ for $i  < 0$ and ${\rm H}_0 C = \mathbb Z$ such that $C_0 = \Lambda \rightarrow {\rm H}_0 C = \mathbb Z$ is the augmentation of $\Lambda$. A chain map, $(\varphi, f): (\pi, C) \rightarrow (\pi', C')$, of reduced chain complexes, is reduced if $f_0$ is induced by $\varphi_{\sharp}$, and a chain homotopy $\alpha$ of reduced chain maps is reduced if $\alpha_0 = 0$. The objects of the category ${\rm\bf H}_0$ are reduced chain complexes and the morphisms are reduced chain maps. Homotopies in ${\rm\bf H}_0$ are reduced chain homotopies. Every chain complex $(\pi, C)$ in ${\rm\bf H}_0$ is equipped with the augmentation $\varepsilon: C \rightarrow \mathbb Z$ in ${\rm\bf H}_0$. The ring homomorphism $\mathbb Z \rightarrow \Lambda$ yields the co--augmentation $\iota: \mathbb Z \rightarrow C$, where we view $\mathbb Z = (0, \mathbb Z)$ as chain complex with trivial group $\pi = 0$ concentrated in degree $0$. Note that $\varepsilon \iota = {\rm id}_{\mathbb Z}$, and the composite $\iota \varepsilon: C \rightarrow C'$ is the \emph{trivial} map.

For an object $X$ in ${\rm\bf{CW}}_0$, the cellular chain complex $C(\widehat X)$ of the universal cover $\widehat X$ is given by $C_n(\widehat X) = {\rm H}_n(\widehat X^n, \widehat X^{n+1})$. The fundamental group $\pi = \pi_1(X)$ acts on $C(\widehat X)$,  and viewing $C(\widehat X)$ as a complex of left $\Lambda$--modules, we obtain the object $\widehat C(X) = (\pi, C(\widehat X))$ in ${\rm\bf H}_0$. Moreover, a morphism $f: X \rightarrow Y$ in ${\rm\bf CW}_0$ induces the homomorphism $\pi_1(f)$ on the fundamental groups and the $\pi_1(f)$--equivariant map $\widehat f: \widehat X \rightarrow \widehat Y$ which in turn induces the $\pi_1(f)$--equivariant chain map $\widehat f_{\ast}: C(\widehat X) \rightarrow C(\widehat Y)$ in ${\rm\bf H}_0$. As $\widehat f$ preserves base points, $\widehat C(f) = (\pi_1(f), \widehat f_{\ast})$ is a reduced chain map. We obtain the functor
\begin{equation}\label{Chat}
\widehat C: {\rm\bf CW}_0  \longrightarrow {\rm\bf H}_0.
\end{equation}
The chain complex $C$ in ${\rm\bf H}_0$ is \emph{$2$--realizable} if there is an object $X$ in ${\rm\bf CW}_0$ such that $\widehat C(X^2) \cong C_{\leq 2}$, that is, $\widehat C(X^2)$ is isomorphic to $C$ in degree $\leq 2$. 
Given two objects $X$ and $Y$ in ${\rm\bf{CW}}_0$, their product again carries a cellular structure and we obtain the object $X \times Y$ in ${\rm\bf{CW}}_0$ with base point $(\ast, \ast)$ and universal cover $(X \times Y)\widehat \  = \widehat X \times \widehat Y$, so that 
\begin{equation}
\widehat C(X \times Y) = (\pi \times \pi, C(\widehat X) \otimes_{\mathbb Z} C(\widehat Y)).
\end{equation}

For $i = 1, 2$, let $p_i: X \times X \rightarrow X$ be the projection onto the $i$--th factor. A \emph{diagonal} $\Delta: X \rightarrow X \times X$ in ${\rm\bf{CW}}_0$ is a cellular map with $p_i \Delta \simeq {\rm{id}}_X$ in ${\rm\bf{CW}}_0$ for $i = 1, 2$. A \emph{diagonal} on $(\pi, C)$ in ${\rm\bf H}_0$ is a chain map $(\delta, \Delta): (\pi, C) \rightarrow (\pi \times \pi, C \otimes_{\mathbb Z} C)$ in ${\rm\bf H}_0$ with $\delta: \pi \rightarrow \pi \times \pi, g \mapsto (g,g)$, such that $p_i \Delta \simeq {\rm id}_C$ for $i = 1, 2$, where $p_1 = {\rm id} \otimes \varepsilon$ and $p_2 = \varepsilon \otimes {\rm id}$. 

The diagonal $(\delta, \Delta)$ in ${\rm\bf H}_0$ is homotopy co--associative if the diagram
\begin{equation*}
\xymatrix{
C \ar[rr]^-{\Delta} \ar[d]^{\Delta} && C \otimes _{\mathbb Z} C \ar[d]^-{\mbox{id} \otimes \Delta} \\
C \otimes _{\mathbb Z} C \ar[rr]^-{\Delta \otimes \mbox{id}} && C \otimes _{\mathbb Z} C \otimes _{\mathbb Z} C}
\end{equation*}
commutes up to chain homotopy in ${\rm\bf H}_0$. The diagonal $(\delta, \Delta)$ in ${\rm\bf H}_0$ is homotopy co--commutative if the diagram

\begin{equation*}
\xymatrix{
C \ar[rr]^-{\Delta} \ar[drr]_{\Delta} && C \otimes _{\mathbb Z} C \ar[d]^-{T} \\
&& C \otimes _{\mathbb Z} C}
\end{equation*}
commutes up to chain homotopy in ${\rm\bf H}_0$, where $T$ is given by $T(c \otimes d) = (-1)^{|c||d|} d \otimes c.$ 

By the cellular approximation theorem, there is a diagonal $\Delta: X \rightarrow X \times X$ in ${\rm\bf{CW}}_0$ for every object $X$ in ${\rm\bf{CW}}_0$. Applying the functor $\widehat C$ to such a diagonal, we obtain the diagonal $\widehat C(\Delta)$ in ${\rm\bf H}_0$. This raises the question of realizabilty, that is, given a diagonal $(\delta, \Delta): \widehat C (X) \rightarrow \widehat C(X) \otimes_{\mathbb Z} \widehat C(X)$ in ${\rm\bf H}_0$, is there a diagonal $\Delta$ in ${\rm\bf{CW}}_0$ with $\widehat C(\Delta) = (\delta, \Delta)$? As $\widehat C(\Delta)$ is homotopy co--associative and homotopy co--commutative for any diagonal $\Delta$ in ${\rm\bf{CW}}_0$, homotopy co--associativity and homotopy co--commutativity of $(\delta, \Delta)$ are necessary conditions for realizability. 

To discuss questions of realizability for a functor $\lambda: {\rm\bf A} \rightarrow {\rm\bf B}$, we consider pairs $(A,b)$, where $b: \lambda A \cong B$ is an equivalence in $\rm\bf B$. Two such pairs are equivalent, $(A,b) \sim (A',b')$, if and only if there is an equivalence $g: A' \cong A$ in $\rm\bf A$ with $\lambda g = b^{-1} b'$. The classes of this equivalence relation form the class of $\lambda$--realizations of $B$,
\begin{equation}
{\rm Real}_{\lambda}(B) = \{ (A,b) \ | \ b: \lambda A \cong B \} / \sim.
\end{equation}
We say that $B$ is $\lambda$--realizable if ${\rm Real}_{\lambda}(B)$ is non--empty. The functor $\lambda: {\rm\bf A} \rightarrow {\rm\bf B}$ is \emph{representative} if all objects $B$ in $\rm\bf B$ are $\lambda$--realizable. Further, we say that $\lambda$ \emph{reflects isomorphisms}, if a morphism $f$ in $\rm\bf A$ is an equivalence whenever $\lambda(f)$ is an equivalence in $\rm\bf B$. The functor $\lambda$ is \emph{full} if, for every morphism $\overline f: \lambda(A) \rightarrow \lambda(A')$ in $\rm\bf B$, there is a morphism $f: A \rightarrow A'$ in $\rm\bf A$, such that $\lambda(f) = \overline f$, we then say $\overline f$ is \emph{$\lambda$--realizable}.

\section{$\rm PD$--chain complexes and $\rm PD$--complexes}\label{PDcomp}
We start this section with a description of the cap product on chain complexes. We fix a homomorphism $\omega: \pi \rightarrow {\mathbb{Z}} / 2 {\mathbb{Z}} = \{0,1\}$ which gives rise to the anti--isomorphism $\overline{\phantom{x}}: {{\Lambda}} \rightarrow {{\Lambda}}$ of rings defined by $\overline g = (-1)^{\omega(g)} g^{-1}$ for $g \in \pi$. To the left $\Lambda$--module $M$ we associate the right $\Lambda$--module $M^{\omega}$ with the same underlying abelian group and action given by $\lambda.a = a.\overline{\lambda}$ for $a \in A$ and $\lambda \in \Lambda$. Proceeding analogously for a right $\Lambda$--module $N$, we obtain a left $\Lambda$--module $^{\omega}N$. We put
\begin{equation*}
{\rm H}_n(C,M^{\omega}) = {\rm H}_n(M^{\omega} \otimes_{\Lambda} C); \quad
{\rm H}^k(C,M) = {\rm H}_{-k}(\mbox{Hom}_{\Lambda}(C,M)).\]
To define the $\omega$--twisted cap product $\cap$ for a chain complex $C$ in ${\rm\bf H}_0$ with diagonal $(\delta, \Delta)$, write $\Delta(c) = \sum_{i+j=n, \alpha} c_{i, \alpha}' \otimes c_{j, \alpha}''$ for $c \in C$. Then 
\begin{eqnarray*}
\cap: \mbox{Hom}_{\Lambda}(C, M)_{-k} \otimes_{\mathbb Z}(\mathbb Z^{\omega} \otimes_{\Lambda} C)_{n} & \rightarrow & (M^{\omega} \otimes_{\Lambda} C)_{n-k} \\
\psi \otimes(z \otimes c) & \mapsto & \sum_{\alpha} z \psi(c'_{k, \alpha}) \otimes c''_{n-k, \alpha}
\end{eqnarray*}
for every left $\Lambda$--module $M$. Passing to homology and composing with 
\begin{gather*}
{\rm H}^{\ast} (C, M) \otimes_{\mathbb Z} {\rm H}_{\ast} (C \otimes_{\mathbb Z} C, \mathbb Z^{\omega}) \rightarrow {\rm H}_{\ast} (\mbox{Hom}_{\Lambda}(C, M)\big) \otimes_{\mathbb Z} \big( \mathbb Z^{\omega} \otimes_{\Lambda} (C \otimes_{\mathbb Z} C) )), \\
[\psi] {\otimes}[y] \mapsto [\psi {\otimes}y],
\end{gather*}
we obtain
\begin{equation}
\cap: {\rm H}^k(C, M) \otimes_{\mathbb Z}{\rm H}_{n}(C,\mathbb Z^{\omega}) \rightarrow {\rm H}_{n-k}(C, M^{\omega}). 
\end{equation}
A \emph{$\rm PD^n$--chain complex} $C = ((\pi, C), \omega, [C], \Delta)$ consists of a free chain complex $(\pi, C)$ in ${\rm\bf H}_0$ with $\pi$ finitely presented, a group homomorphism $\omega: \pi \rightarrow \mathbb Z / 2 \mathbb Z$, a fundamental class $[C] \in {\rm H}_n(C, \mathbb Z^{\omega})$ and a diagonal $\Delta: C \rightarrow C \otimes C$ in ${\rm\bf H}_0$, such that ${\rm H}_1C = 0$ and
\begin{equation}\label{capwithfund}
\cap [C]:   {\rm H}^{r}(C, M) \rightarrow {\rm H}_{n-r}(C, M^{\omega});
\quad \alpha \mapsto \alpha \cap [X]
\end{equation}
is an isomorphism of abelian groups for every $r \in \mathbb Z$ and every left $\Lambda$--module $M$. A morphism of $\rm PD^n$--chain complexes $f: ((\pi, C), \omega, [C], \Delta) \rightarrow ((\pi', C'), \omega', [C'], \Delta')$ is a morphism $(\varphi, f): (\pi, C) \rightarrow (\pi', C')$ in ${\rm\bf H}_0$ such that $\omega = \omega' \varphi$ and $(f \otimes f) \Delta \simeq \Delta' f$. The category ${\rm\bf PD}^n_{\ast}$ is the category of $\rm PD^n$--chain complexes and morphisms between them. Homotopies in ${\rm\bf PD}^n_{\ast}$ are reduced chain homotopies. The subcategory ${\rm\bf PD}^n_{\ast+}$ of ${\rm\bf PD}^n_{\ast}$ is the category consisting of ${\rm PD}^n$--chain complexes and \emph{oriented} or \emph{degree $1$} morphisms of ${\rm PD}^n$--chain complexes, that is, morphisms $f: C \rightarrow D$ with $f_{\ast}[C] = [D]$.
Wall \cite{Wall1} showed that it is enough to demand that (\ref{capwithfund}) is an isomorphism for $M = \Lambda$. If $1 \otimes x \in \mathbb Z^{\omega } \otimes_{\Lambda} C_n$ represents the fundamental class $[C]$, where $C_i$ is finitely generated for $i \in \mathbb Z$, then $\cap [C]$ in (\ref{capwithfund}) is an isomorphism if and only if 
\begin{equation}
\cap 1 \otimes x: C^{\ast} = \ ^{\omega}{\rm Hom}_{\Lambda} (C, ^{\omega}\!\!\Lambda) 
\rightarrow \Lambda \otimes_{\Lambda} C = C
\end{equation}
is a homotopy equivalence of chain complexes of degree $n$. Here finite generation implies that $C^{\ast}$ is a free chain complex.
\begin{lemma}\label{2real}
Every ${\rm PD}^n$--chain complex is homotopy equivalent in ${\rm\bf PD}^n_{\ast}$ to a $2$--realizable ${\rm PD}^n$--chain complex.
\end{lemma}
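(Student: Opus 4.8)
The plan is to replace a given ${\rm PD}^n$--chain complex $C = ((\pi,C),\omega,[C],\Delta)$ by a homotopy equivalent one whose low-dimensional part is the chain complex of the $2$--skeleton of a ${\rm CW}$--complex realizing $\pi$. Since $\pi$ is finitely presented, choose a finite presentation and let $K$ be the associated reduced $2$--complex (a wedge of circles with $2$--cells attached). Then $\widehat C(K)$ is a free reduced chain complex in ${\rm\bf H}_0$ with $\pi_1 = \pi$, concentrated in degrees $0,1,2$, with $C_0(\widehat K) = \Lambda$, $C_1(\widehat K)$ free on the generators, $C_2(\widehat K)$ free on the relators, and ${\rm H}_0 = \mathbb Z$, ${\rm H}_1 = 0$. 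First I would show that $C$ is chain homotopy equivalent in ${\rm\bf H}_0$, via a reduced chain map, to a free reduced complex $C'$ with $C'_{\leq 2} = \widehat C(K^2)$; this is a standard ``realization of the $2$--type'' argument: because ${\rm H}_1 C = 0$ and $C$ is free, one builds a reduced chain map $\widehat C(K) \to C$ inducing the identity on $\pi$ and an isomorphism on ${\rm H}_0$, ${\rm H}_1$ and a surjection on ${\rm H}_2$ through degree $2$, then corrects it to an equivalence in degrees $\le 2$ by adding free summands (which one can absorb by an elementary expansion/contraction of both $C$ and $K$, using that both $C_2$ and $C_2(\widehat K)$ are free of arbitrary finite rank). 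The upshot is a complex $C'$ with $C'_{\le 2}$ literally of the form $\widehat C(X^2)$ for some $X$ in ${\rm\bf CW}_0$, hence $C'$ is $2$--realizable in the sense defined above.

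Next I would transport the remaining structure. The chain homotopy equivalence $h \colon C \xrightarrow{\ \simeq\ } C'$ in ${\rm\bf H}_0$ (with $\varphi = {\rm id}_\pi$) lets me push forward the fundamental class, setting $[C'] := h_*[C] \in {\rm H}_n(C',\mathbb Z^\omega)$, and push forward the diagonal, setting $\Delta' := (h\otimes h)\,\Delta\, g$ where $g$ is a homotopy inverse of $h$; since $h$ is an equivalence, $\Delta'$ is again a diagonal in ${\rm\bf H}_0$ (the conditions $p_i \Delta' \simeq {\rm id}$ follow from $p_i\Delta \simeq {\rm id}$ and naturality of the projections under $h\otimes h$ and $g$), and $(h\otimes h)\Delta \simeq \Delta' h$ holds by construction up to the chosen homotopies. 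The Poincaré duality condition (\ref{capwithfund}) for $C'$ with $[C']$ follows because cap product with the fundamental class is natural under chain homotopy equivalences and $h_*$ is an isomorphism on all the relevant (co)homology groups; equivalently, one checks directly using Wall's reformulation that $\cap\, 1\otimes x'$ is a homotopy equivalence for $C'$ because it is for $C$ and $C \simeq C'$. Keeping $\omega$ unchanged (it factors through $\pi = \pi_1 C = \pi_1 C'$), one sees ${\rm H}_1 C' = 0$ and finite presentability of $\pi$ are preserved, so $C' = ((\pi,C'),\omega,[C'],\Delta')$ is a ${\rm PD}^n$--chain complex, and $h \colon C \to C'$ is by construction a morphism in ${\rm\bf PD}^n_*$; being a chain homotopy equivalence in ${\rm\bf H}_0$ compatible with all the structure, it is a homotopy equivalence in ${\rm\bf PD}^n_*$.

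The main obstacle I anticipate is the first step: arranging that $C'_{\le 2}$ is \emph{exactly} the chain complex of a $2$--skeleton, not merely chain homotopy equivalent to one through degree $2$. One must be careful that the ``stabilization'' by free modules needed to straighten the low-dimensional part can be matched on the topological side by attaching trivial (cancelling) pairs of $2$-- and $3$--cells, or alternatively that adding a free summand $\Lambda^r$ in degree $2$ together with a copy of $\Lambda^r$ in degree $3$ mapped isomorphically does not disturb the Poincaré duality condition — this is where the freeness of $C$ in \emph{all} degrees and the fact that $\pi$ is finitely presented (so $K$ can be taken finite) are essential. A secondary point requiring care is checking that the pushed-forward diagonal $\Delta'$ genuinely lands in ${\rm\bf H}_0$ and is homotopy co--associative / co--commutative if one wants to preserve those properties; but since the definition of ${\rm PD}^n$--chain complex as stated does not require co--associativity or co--commutativity, transporting $\Delta$ along the equivalence suffices, and the verification is routine diagram-chasing with chain homotopies.
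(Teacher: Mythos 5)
Your approach is morally correct but takes a different formal route from the paper, which disposes of the lemma in one line by citing Proposition III~2.13 and Theorem III~2.12 of Baues' \emph{Combinatorial Homotopy and $4$--Dimensional Complexes}. Those two results establish that the functor $C \colon {\rm\bf H}_3^c \to {\rm\bf H}_0$ from homotopy systems of order $3$ to reduced chain complexes is full, faithful and representative on the relevant subcategory of free reduced complexes with ${\rm H}_1 = 0$ and finitely presented $\pi_1$; $2$--realizability then falls out immediately, and transporting the fundamental class, diagonal, and duality condition along the resulting equivalence is (as you say) routine. What you propose is in effect to reconstruct the content of those cited results from scratch: take a finite presentation, form the associated $2$--complex $K$, build a comparison map $\widehat C(K) \to C$, and then stabilize. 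This is the right picture, and you correctly locate the crux: it is not enough to have a chain map that is an equivalence ``through degree $2$'' (indeed your intermediate demand of a surjection on ${\rm H}_2$ is neither needed nor generally arranged in one step); one needs an honest chain homotopy equivalence $C \simeq C'$ of full complexes such that $C'_{\le 2}$ is literally $\widehat C(X^2)$ for a $2$--complex $X^2$, obtained by allowing $X^2$ to be $K^2$ wedged with additional trivially attached $2$--spheres, and then extending in degrees $\ge 3$. That stabilization-and-extension step is precisely the substance of the Baues results the paper cites, and it is the part your sketch leaves compressed. Your second paragraph, on pushing forward $[C]$, $\omega$ and $\Delta$, is correct but is the easy part; the definition of ${\rm PD}^n$--chain complex does not demand co--associativity or co--commutativity, so transporting $\Delta$ by conjugation with a homotopy inverse suffices, and Wall's reduction to $M = \Lambda$ makes the duality check immediate. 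In short: correct outline and correct identification of the obstacle, but you should either flesh out the stabilization/extension argument or, more economically, cite the equivalence of categories from Baues as the paper does.
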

\begin{proof}
This follows from Proposition III 2.13 and Theorem III 2.12 in \cite{Baues1}.
\end{proof}
A \emph{$\rm PD^n$--complex} $X = (X, \omega, [X], \Delta)$ consists of an object $X$ in ${\rm\bf CW}_0$ with finitely presented fundamental group $\pi_1(X)$, a group homomorphism $\omega: \pi_1 X \rightarrow \mathbb Z / 2 \mathbb Z$, a fundamental class $[X] \in {\rm H}_n(X, \mathbb Z^{\omega})$ and a diagonal $\Delta: X \rightarrow X \times X$ in ${\rm\bf CW}_0$, such that $(\widehat CX, \omega, [X], \widehat C \Delta)$ is a $\rm PD^n$--chain complex. A morphism of $\rm PD^n$--complexes $f: (X, \omega, [X], \Delta) \rightarrow (X', \omega', [X'], \Delta')$ is a morphism $f: X \rightarrow X'$ in ${\rm\bf CW}_0$ such that $\omega = \omega' \pi_1(f)$. The category ${\rm\bf PD}^n$ is the category of $\rm PD^n$--complexes and morphisms between them. Homotopies in ${\rm\bf PD}^n$ are homotopies in ${\rm\bf CW}_0$. The subcategory ${\rm\bf PD}^n_+$ of ${\rm\bf PD}^n$ is the category consisting of ${\rm PD}^n$--complexes and \emph{oriented} or \emph{degree $1$} morphisms of ${\rm PD}^n$--complexes, that is, morphisms $f: X \rightarrow Y$ with $f_{\ast}[X] = [Y]$.

\begin{remark}
Our ${\rm PD}^n$--complexes have finitely presented fundamental groups by definition and are thus finitely dominated by Propostion 1.1 in \cite{Wall3}.
\end{remark}

Let $X$ be a ${\rm PD}^n$--complex with $n \geq 3$. We say that $X$ is \emph{standard}, if $X$ is a $\rm CW$--complex which is $n$--dimensional and has exactly one $n$--cell $e^n$. We say that $X$ is \emph{weakly standard}, if $X$ has a subcomplex $X'$ with $X = X' \cup e^n$, where $X'$ is $n$--dimensional and satisfies ${\rm H}^n(X', B) = 0$ for all coefficient modules $B$. In this sense $X'$ is homologically $(n-1)$--dimensional. Of course standard implies weakly standard with $X' = X^{n-1}$.

\begin{remark*}
Every compact connected manifold $M$ of dimension $n$ has the homotopy type of a finite standard $\rm PD^n$--complex. 
\end{remark*}

\begin{remark}\label{Wallres}
Wall's Theorem 2.4 in \cite{Wall1} and Theorem E in \cite{Wall2} imply that, for $n \geq 4$, every ${\rm PD}^n$--complex is homotopy equivalent to a standard ${\rm PD}^n$--complex and, for $n = 3$, every ${\rm PD}^3$--complex is homotopy equivalent to a weakly standard ${\rm PD}^3$--complex.
\end{remark}

Let $C$ be a ${\rm PD}^n$--chain complex with $n \geq 3$. We say that $C$ is \emph{standard}, if $C$ is $2$--realizable, $C_i = 0$ for $i > n$, and $C_n = \Lambda[e_n]$, where $[e_n] \in C_n$. We say that $C$ is \emph{weakly standard}, if $C$ is $2$--realizable and has a subcomplex $C'$ with $C = C' \oplus \Lambda[e_n]$, where $C'$ is $n$--dimensional and satisfies ${\rm H}^n(C', B) = 0$ for all coefficient modules $B$. 

\begin{remark}\label{corr}
A ${\rm PD}^n$--complex, $X$, is homotopy equivalent to a finite standard, standard or weakly standard ${\rm PD}^n$--complex, respectively, if and only if the ${\rm PD}^n$--chain complex $\widehat CX$ is homotopy equivalent to a finite standard, standard  or weakly standard ${\rm PD}^n$--chain complex, respectively.
\end{remark}

\section{Fundamental triples}\label{triplesection}
Homotopy types of $3$--manifolds and ${\rm PD}^3$--complexes were considered by Thomas \cite{Thomas}, Swarup \cite{Swarup} and Hendriks \cite{Hendriks}. In particular, Hendriks and Swarup provided a criterion for the existence of degree $1$ maps between $3$--manifolds and ${\rm PD}^3$--complexes, respectively. In this section we generalize these results to manifolds and Poincar{\'e} duality complexes of arbitrary dimension $n$.

Let $k$--{\bf types} be the full subcategory of ${\rm\bf CW}_0 /\simeq$ consisting of $\rm CW$--complexes $X$ in ${\rm\bf CW}_0$ with $\pi_i(X) = 0$ for $i > k$. The $k$--th \emph{Postnikov functor}
\begin{equation*}
P_k: {\rm\bf CW}_0 \rightarrow k{\rm\bf-types}
\end{equation*}
is defined as follows. For $X$ in ${\rm\bf CW}_0$ we obtain $P_kX$ by ``killing homotopy groups'', that is, we choose a $\rm CW$--complex $P_kX$ with $(k+1)$--skeleton $(P_kX)^{k+1} = X^{k+1}$ and $\pi_i(P_kX) = 0$ for $i > k$. For a morphism $f: X \rightarrow Y$ in ${\rm\bf CW}_0$ we may choose a map $Pf: P_kX \rightarrow P_kY$ which extends the restriction $f^{k+1}: X^{k+1} \rightarrow Y^{k+1}$ as $\pi_i(P_kY) = 0$ for $i > k$. Then the functor $P_k$ assigns $P_kX$ to $X$ and the homotopy class of $Pf$ to $f$. Different choices for $P_kX$ yield canonically isomorphic functors $P_k$. The $\rm CW$--complex $P_1X = K(\pi_1X,1)$ is an Eilenberg--{Mac\,Lane} space and, as a functor, $P_1$ is equivalent to the functor $\pi_1$ of fundamental groups. There are natural maps
\begin{equation}\label{postnikov}
p_k: X \longrightarrow P_kX
\end{equation}
in ${\rm\bf CW}_0/\simeq$ extending the inclusion $X^{k+1} \subseteq P_kX$. 

For $n \geq 3$, a \emph{fundamental triple} $T = (X, \omega, t)$ \emph{of formal dimension $n$} consists of an $(n-2)$--type $X$, a homomorphism $\omega: \pi_1X \rightarrow \mathbb Z / 2 \mathbb Z$ and an element $t \in {\rm H}_n(X, \mathbb Z^{\omega})$. A morphism $(X, \omega_X, t_X) \rightarrow (Y, \omega_Y, t_Y)$ between  fundamental triples is a homotopy class $\{f\}: X \rightarrow Y$ of maps of the $(n-2)$--types, such that $\omega_X = \omega_Y \pi_1(f)$ and $f_{\ast}(t_X) = t_Y$. We obtain the category ${\rm\bf Trp}^n$ of fundamental triples $T$ of formal dimension $n$ and the functor
\begin{equation*}
\tau: {\rm\bf PD}^n_+ /\simeq \ \longrightarrow {\rm\bf Trp}^n, \quad X \longmapsto (P_{n-2}X, \omega_X, p_{n-2 \ast}[X]).
\end{equation*}
Every degree $1$ morphism $Y \rightarrow X$ in ${\rm\bf PD}^n_+$ induces a surjection $\pi_1Y \rightarrow \pi_1X$ on fundamental groups, see for example \cite{Browder1}, and hence we introduce the subcategory ${\rm\bf Trp}^n_+ \subset {\rm\bf Trp}^n$ consisting of all morphisms inducing surjections on fundamental groups. Then the functor $\tau$ yields the functor
\begin{equation}\label{tauplus}
\tau_+: {\rm\bf PD}^n_+ /\simeq \ \longrightarrow {\rm\bf Trp}^n_+.
\end{equation}
As a main result in this section we show
\begin{theorem}\label{mainthm}
The functor $\tau_+$ reflects isomorphisms and is full  for $n \geq 3$.
\end{theorem}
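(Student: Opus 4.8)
The plan is to treat the two assertions separately, since the excerpt already tells us that reflecting isomorphisms is the easy half. First I would show that $\tau_+$ reflects isomorphisms. Suppose $f\colon Y\to X$ is a degree $1$ morphism of ${\rm PD}^n$--complexes with $\tau_+(f)$ an isomorphism in ${\rm\bf Trp}^n_+$; then $\pi_1(f)$ is an isomorphism (so $Y$ and $X$ have the same fundamental group and $f$ is a homology equivalence with $\mathbb Z[\pi]$--coefficients on the $(n-2)$--types). One lifts to universal covers and uses that $f$ preserves fundamental classes together with Poincar\'e duality on both $Y$ and $X$: the cap products $\cap[Y]$ and $\cap[X]$ are isomorphisms and $f$ is compatible with them, so the five lemma (applied to the long exact sequences of the pair, or directly to the induced maps on $\widehat C$) forces $\widehat f_{\ast}$ to be a homology isomorphism in all degrees, hence a chain homotopy equivalence of free complexes. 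By Whitehead's theorem $f$ is a homotopy equivalence in ${\rm\bf CW}_0$, i.e.\ an isomorphism in ${\rm\bf PD}^n_+/\!\simeq$.

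The substantial part is fullness. Given ${\rm PD}^n$--complexes $Y,X$ with $n\geq 3$ and a morphism $\bar f\colon \tau_+Y\to\tau_+X$ in ${\rm\bf Trp}^n_+$ — that is, a map $g\colon P_{n-2}Y\to P_{n-2}X$ with $\omega_Y=\omega_X\pi_1(g)$, with $\pi_1(g)$ surjective, and with $g_{\ast}p_{n-2\,\ast}[Y]=p_{n-2\,\ast}[X]$ — I must produce $\bar f\colon Y\to X$ in ${\rm\bf PD}^n_+$ realizing it. Using Remark~\ref{Wallres} I may assume $Y$ and $X$ are standard (for $n\geq4$) or weakly standard (for $n=3$), so each has a single top cell and the rest is homologically $(n-1)$--dimensional. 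The first step is to build a $\varphi$--equivariant chain map $\xi\colon\widehat C(Y)\to\widehat C(X)$ with $\varphi=\pi_1(g)$, agreeing with the chain map induced by $g$ below dimension $n-1$, and with $\xi_{\ast}[Y]=[X]$. Below degree $n-1$ the complexes agree with those of the $(n-2)$--types, so $g$ gives $\xi$ there; one then extends over degrees $n-1$ and $n$ using that $\widehat C(X)$ is a complex of free $\Lambda$--modules (projectivity lets one lift), and the obstruction to extending so that fundamental classes are preserved is controlled by the hypothesis $g_{\ast}p_{n-2\,\ast}[Y]=p_{n-2\,\ast}[X]$ together with Poincar\'e duality — duality identifies ${\rm H}_n$ with ${\rm H}^0$ and lets one adjust $\xi$ in top degree by a cocycle so that it hits $[X]$ exactly. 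Here the standard/weakly standard reduction is what makes the top-degree bookkeeping finite and tractable.

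The second step is to realize the chain map $\xi$ by an actual map of spaces. For this I invoke the obstruction theory of \cite{Baues1}: the category ${\rm\bf H}^{k+1}_c$ of homotopy systems of order $(k+1)$, recalled in Section~\ref{homsys}, sits between chain complexes and spaces, and a chain map can be lifted step by step over the skeleta, with obstructions lying in cohomology groups with coefficients in the homotopy groups of $X$. Because $\xi$ already comes from the genuine map $g$ on the $(n-2)$--type, the only potentially nonzero obstructions live in the top one or two dimensions; one uses $2$--realizability (Lemma~\ref{2real}, so that the low-dimensional part is genuinely geometric) and the homological $(n-1)$--dimensionality of $X'$ to kill or avoid them, producing $\bar f\colon Y\to X$ in ${\rm\bf CW}_0$ with $\widehat C(\bar f)\simeq\xi$. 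Finally $\bar f$ satisfies $\omega_Y=\omega_X\pi_1(\bar f)$ by construction and $\bar f_{\ast}[Y]=[X]$ since $\xi_{\ast}[Y]=[X]$, so $\bar f$ is a degree $1$ morphism, and $\tau_+(\bar f)=\bar f$ because both induce $g$ on $(n-2)$--types up to homotopy (the $(n-2)$--type is detected by the $(n-1)$--skeleton, on which $\bar f$ and $g$ agree).

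\textbf{Main obstacle.} I expect the crux to be the simultaneous control of two things in the passage from $\xi$ to $\bar f$: ensuring the realizing map exists at all (vanishing of the Baues obstruction in degree $n$, which is where the hypothesis that $\bar f$ induces a \emph{surjection} on $\pi_1$ and the standard-complex reduction really earn their keep) \emph{and} ensuring it can be chosen to still preserve the fundamental class after the geometric realization, rather than only up to an error term. Matching these — i.e.\ showing the chain-level adjustment that fixes the fundamental class can be carried through the homotopy-systems lifting without reintroducing an obstruction — is the delicate bookkeeping the rest of the paper is presumably devoted to.
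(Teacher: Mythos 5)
Your high--level structure matches the paper's: reflecting isomorphisms by Poincar\'e duality plus Whitehead, then reduction to (weakly) standard complexes via Remark~\ref{Wallres}, then fullness by first constructing a chain map $\xi\colon \widehat C(Y)\to\widehat C(X)$ preserving fundamental classes and then realizing it by a genuine map via the homotopy--systems obstruction theory of \cite{Baues1}. But the two middle steps are exactly where the work lies, and your sketch misses the two ideas that make them go through. First, the chain map $\xi$ is not obtained by a generic projectivity/lifting argument followed by a cocycle correction in top degree; the paper builds it by hand, and the heart of the construction is Lemma~\ref{abargenerates}: writing $d_n[e']=\sum a_m[e'_m]$ for the boundary of the single top cell of $Y$, the coefficients $a_m$ generate the twisted augmentation ideal $\overline{I(\pi_1Y)}$ as a right $\Lambda'$--module. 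This is where the \emph{surjectivity} of $\pi_1(f)$ is used — it ensures $\{\varphi(a_m)\}$ generates $\overline{I(\pi_1X)}$ so that the error term $y\in\overline{I(\pi)}\widehat C_nP$ in $f_{*}p'_{*}[e']-p_{*}[e]=dx+y$ can be written $y=\sum\varphi(a_m)z_m$; the $z_m$ then feed into the definition of a correction $\overline\alpha_n$ that modifies $\xi$ in degree $n-1$ (not $n$) so that $\xi$ is a chain map with $\xi[e']=[e]$. You located the surjectivity hypothesis in the obstruction step instead, which is inaccurate.

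Second, you wave at the vanishing of the Baues obstruction by citing $2$--realizability and homological $(n-1)$--dimensionality, but that is not the argument. The actual mechanism is that the Postnikov map $p\colon X\to P_{n-2}X$ induces an isomorphism $p_{*}\colon\Gamma_{n-1}X\to\Gamma_{n-1}P_{n-2}X$, so $\mathcal O(\xi,\eta)$ is detected by its image under $p_{*}$, which equals the obstruction of the composite $r(p)(\xi,\eta)$. The homotopy--commutativity of diagram~\eqref{homcom} — itself a small computation requiring the homotopy $\alpha$ the paper constructs from $x$ and the $z_m$ — identifies that composite with $r(f)r(p')$, which is $\lambda$--realizable, hence has vanishing obstruction; therefore $\mathcal O(\xi,\eta)=0$. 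The further obstructions vanish because $\widehat{\rm H}^{n+1}(Y,\Gamma_nX)=0$ and $Y=Y^n$, $X=X^n$. Finally, your stated second obstacle — ''ensuring it can be chosen to still preserve the fundamental class after the geometric realization'' — is a non--issue: any realization $\overline f$ of the chain map $\xi$ satisfies $\widehat C(\overline f)=\xi$, so $\overline f_{*}[Y]=\xi_{*}[Y]=[X]$ automatically; there is no error term to chase. So the proposal has the right skeleton but lacks the two concrete ideas (Lemma~\ref{abargenerates} and the $\Gamma_{n-1}$--isomorphism) that turn the plan into a proof.
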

As corollaries we mention
\begin{corollary}\label{class}
Take $n \geq 3$. Two $n$--dimensional manifolds, respectively, two ${\rm PD}^n$--complexes, are orientedly homotopy equivalent if and only if their fundamental triples are isomorphic.
\end{corollary}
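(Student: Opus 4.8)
The plan is to deduce Corollary \ref{class} from Theorem \ref{mainthm} together with the remark that every compact connected $n$--manifold has the homotopy type of a finite standard ${\rm PD}^n$--complex, so that it suffices to treat the case of ${\rm PD}^n$--complexes. Suppose $X$ and $Y$ are ${\rm PD}^n$--complexes whose fundamental triples $\tau_+ X$ and $\tau_+ Y$ are isomorphic. Since an isomorphism of triples is in particular a morphism in ${\rm\bf Trp}^n_+$, and $\tau_+$ is full by Theorem \ref{mainthm}, we may lift it to a degree $1$ morphism $f: Y \to X$ in ${\rm\bf PD}^n_+ /\!\simeq$. The point is that $\tau_+(f)$ is then the given isomorphism, hence an equivalence in ${\rm\bf Trp}^n_+$; since $\tau_+$ reflects isomorphisms, $f$ is an equivalence in ${\rm\bf PD}^n_+/\!\simeq$, i.e.\ $X$ and $Y$ are orientedly homotopy equivalent. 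Conversely, if $g: Y \to X$ is an oriented homotopy equivalence, then $\tau_+(g)$ is an isomorphism of fundamental triples because $\tau_+$ is a functor, so the two triples are isomorphic. This direction is immediate.

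The one subtlety worth spelling out is the passage to manifolds: by the unnumbered Remark, a compact connected $n$--manifold $M$ is homotopy equivalent to a finite standard ${\rm PD}^n$--complex $X_M$, and this equivalence can be taken to carry the manifold fundamental class to the Poincar\'e fundamental class of $X_M$ (the orientation character $\omega$ likewise corresponds). Hence ``orientedly homotopy equivalent'' for manifolds translates to the same relation for the associated ${\rm PD}^n$--complexes, and the fundamental triple of $M$ is by definition $\tau_+ X_M = (P_{n-2}X_M, \omega, p_{n-2\,*}[X_M])$. So the manifold case follows verbatim from the ${\rm PD}^n$--complex case once this identification is made.

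I expect no real obstacle here; the work has all been done in Theorem \ref{mainthm}. The only place requiring a modicum of care is making precise that ``their fundamental triples are isomorphic'' and ``there is an oriented homotopy equivalence'' are related by the two formal properties of $\tau_+$ — fullness to produce the lift, reflection of isomorphisms to upgrade the lift to an equivalence — and checking that a morphism realizing an isomorphism of triples automatically induces a surjection on fundamental groups so that it indeed lies in ${\rm\bf Trp}^n_+$ (which is automatic since an isomorphism induces an isomorphism, hence a surjection, on $\pi_1$, and a fullness statement for $\tau_+$ is precisely about morphisms in the target category ${\rm\bf Trp}^n_+$). Thus the proof is essentially a two--line formal argument: fullness gives a degree $1$ map inducing the triple isomorphism, and reflection of isomorphisms promotes it to a homotopy equivalence.
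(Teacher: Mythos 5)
Your argument is exactly the intended deduction: fullness of $\tau_+$ lifts the isomorphism of triples to a degree~$1$ map, and reflection of isomorphisms upgrades that map to an oriented homotopy equivalence, with the converse immediate by functoriality. The paper gives no separate proof of Corollary~\ref{class} precisely because this two-step formal argument is all that is needed, and your handling of the manifold case via the Remark that compact manifolds are homotopy equivalent to finite standard ${\rm PD}^n$--complexes is also the intended reduction.
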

\begin{remark*}
For $n = 3$, Corollary \ref{class} yields the results by Thomas \cite{Thomas}, Swarup \cite{Swarup} and Hendriks \cite{Hendriks}. Turaev reproves Hendriks' result in the appendix of \cite{Turaev}, although the proof needs further explanation. We reprove the result again in a more algebraic way. 
\end{remark*}
\begin{remark*}
Turaev conjectures in \cite{Turaev} that his proof for $n = 3$ has a generalization to ${\rm PD}^n$--complexes whose $(n-2)$--type is an Eilenberg--{Mac\,Lane} space $K(\pi,1)$. Corollary \ref{class} proves this conjecture.
\end{remark*}
Next consider ${\rm PD}^n$--complexes $X$ and $Y$ and a diagram
\begin{equation}\label{overfex}
\xymatrix{
Y \ar[r]^-{p_{n-2}} \ar@{..>}[d]_{\overline f} & P_{n-2}Y \ar[d]^f \\
X \ar[r]^-{p_{n-2}} & P_{n-2}X.}
\end{equation}
\begin{corollary}\label{degoneex}
For $n \geq 3$, there is a degree $1$ map $\overline f$ rendering Diagram (\ref{overfex}) homotopy commutative if and only if $f$ induces a surjection on fundamental groups, is compatible with the orientations $\omega_X$ and $\omega_Y$, that is, $\omega_X \pi_1(f) = \omega_Y$, and
\begin{equation*}
f_{\ast}p_{n-2 \ast}[Y] = p_{n-2 \ast}[X].
\end{equation*}
\end{corollary}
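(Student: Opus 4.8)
The plan is to derive Corollary \ref{degoneex} directly from Theorem \ref{mainthm}, by recognizing Diagram (\ref{overfex}) as the data of a morphism in ${\rm\bf Trp}^n_+$ together with a lift in ${\rm\bf PD}^n_+$. First I would observe that the lower‑left corner is part of the structure map $\tau(X) = (P_{n-2}X, \omega_X, p_{n-2\,\ast}[X])$ and likewise for $Y$. The stated conditions on $f$ — that it induces a surjection on fundamental groups, that $\omega_X\pi_1(f) = \omega_Y$, and that $f_{\ast}p_{n-2\,\ast}[Y] = p_{n-2\,\ast}[X]$ — are precisely the conditions that the homotopy class $\{f\}: P_{n-2}Y \to P_{n-2}X$ defines a morphism $\tau_+(Y) \to \tau_+(X)$ in ${\rm\bf Trp}^n_+$.

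For the ``if'' direction, suppose these three conditions hold. Then $\{f\}$ is a morphism $\tau_+Y \to \tau_+X$ in ${\rm\bf Trp}^n_+$. By Theorem \ref{mainthm}, $\tau_+$ is full, so there exists a morphism $\overline f: Y \to X$ in ${\rm\bf PD}^n_+$ with $\tau_+(\overline f) = \{f\}$; in particular $\overline f$ has degree $1$, i.e.\ $\overline f_{\ast}[Y] = [X]$. It remains to check that $\overline f$ renders (\ref{overfex}) homotopy commutative, that is, $p_{n-2} \circ \overline f \simeq f \circ p_{n-2}$ as maps $Y \to P_{n-2}X$. This follows from the naturality of the Postnikov map $p_{n-2}$ in (\ref{postnikov}): applying the Postnikov functor $P_{n-2}$ to $\overline f$ gives $P_{n-2}(\overline f) = \tau_+(\overline f) = \{f\}$ (up to the canonical identification), and the square relating $p_{n-2}$ on $Y$ and $X$ to $P_{n-2}(\overline f)$ commutes up to homotopy by definition of the natural transformation $p_{n-2}: \mathrm{id} \Rightarrow P_{n-2}$.

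For the ``only if'' direction, suppose $\overline f: Y \to X$ is a degree $1$ map making (\ref{overfex}) homotopy commutative. Since $\overline f$ is a degree $1$ map of ${\rm PD}^n$--complexes, it induces a surjection on fundamental groups — this is the cited fact (see \cite{Browder1}) already invoked in the text when defining ${\rm\bf Trp}^n_+$; chasing the homotopy‑commutative square and using that $\pi_1(p_{n-2})$ is an isomorphism shows $\pi_1(f)$ is this same surjection. Compatibility with orientations, $\omega_X\pi_1(f) = \omega_Y$, is part of what it means for $\overline f$ to be a morphism in ${\rm\bf PD}^n$ (namely $\omega_Y = \omega_X\pi_1(\overline f)$), transported across the square. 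Finally, applying $p_{n-2\,\ast}$ to $\overline f_{\ast}[Y] = [X]$ and using homotopy commutativity of (\ref{overfex}) gives $f_{\ast}p_{n-2\,\ast}[Y] = p_{n-2\,\ast}\overline f_{\ast}[Y] = p_{n-2\,\ast}[X]$, as required.

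The only genuinely delicate point is bookkeeping: one must be careful that $P_{n-2}(\overline f)$ really does equal the given class $\{f\}$, rather than merely some morphism lifting the same triple. Fullness of $\tau_+$ supplies $\overline f$ with $\tau_+(\overline f) = \{f\}$ on the nose in ${\rm\bf Trp}^n_+$, and since $\tau_+(\overline f)$ is by definition the homotopy class of $P_{n-2}(\overline f)$, this is exactly what is needed to invoke naturality of $p_{n-2}$ and conclude homotopy commutativity of the square. So no further work beyond Theorem \ref{mainthm} and the naturality of Postnikov truncation is required; the corollary is essentially a reformulation of fullness in the presence of the natural maps $p_{n-2}$.
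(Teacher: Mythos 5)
Your proof is correct and follows exactly the route the paper intends: Corollary \ref{degoneex} is stated as an immediate consequence of Theorem \ref{mainthm} (fullness of $\tau_+$), and your argument accurately reconstructs both directions, including the careful check that $\tau_+(\overline f) = \{f\}$ together with naturality of $p_{n-2}$ yields homotopy commutativity of the square.
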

\begin{remark*}
Swarup \cite{Swarup} and Hendriks \cite{Hendriks} prove Corollary \ref{degoneex} for $3$--manifolds and ${\rm PD}^3$--complexes, respectively.
\end{remark*}
\begin{remark*}
For oriented ${\rm PD}^4$--complexes with finite fundamental group and $f$ a homotopy equivalence, the map $\overline f$ corresponds to the map $h$ in Lemma 1.3 \cite{HamblKreck} of Hambleton and Kreck. The reader is invited to compare our proof to that of Lemma 1.3 \cite{HamblKreck} which shows the existence of $h$ but not the fact that $h$ is of degree 1.
\end{remark*}
By Remark \ref{Wallres}, Theorem \ref{mainthm} is a consequence of the following Lemmata \ref{tauplusrefl} and \ref{mainlem}.
\begin{lemma}\label{tauplusrefl}
The functor $\tau_+$ reflects isomorphisms.
\end{lemma}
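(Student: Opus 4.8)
The plan is to show that if $\tau_+(f)$ is an isomorphism in ${\rm\bf Trp}^n_+$, then $f$ is a homotopy equivalence of ${\rm PD}^n$--complexes. So suppose $f\colon Y\to X$ is a degree $1$ morphism in ${\rm\bf PD}^n_+$ such that $P_{n-2}(f)\colon P_{n-2}Y\to P_{n-2}X$ is a homotopy equivalence. The first observation is that $\pi_1(f)\colon\pi_1Y\to\pi_1X$ is then an isomorphism, since $P_1$ factors through $P_{n-2}$ (for $n\ge 3$); identify both groups with $\pi$ and work with $\Lambda=\mathbb Z[\pi]$--coefficients throughout. By Whitehead's Theorem in the equivariant/chain setting, it suffices to prove that $\widehat f_\ast\colon C(\widehat Y)\to C(\widehat X)$ is a homology isomorphism, equivalently that $\widehat f$ induces isomorphisms on $\pi_i$ for all $i$, equivalently (since $\pi_1$ already matches) that $\widehat f_\ast$ is a quasi-isomorphism of $\Lambda$--chain complexes.

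The key step is a Poincar\'e-duality-versus-Postnikov argument, of the standard Browder type. First I would use that $P_{n-2}(f)$ being an equivalence forces $f$ to induce isomorphisms on $\pi_i$ for $i\le n-2$, hence (by the Hurewicz/homology comparison) isomorphisms $\widehat f_\ast\colon {\rm H}_i(\widehat Y)\to {\rm H}_i(\widehat X)$ for $i\le n-2$, and likewise isomorphisms on ${\rm H}^i(-,M)$ for $i\le n-2$ and all $M$, and a surjection in the relevant boundary degree. Dually, via cohomology one also controls the top degrees: $\widehat f_\ast$ is an isomorphism on ${\rm H}_i$ for $i\ge 2$ in the range dual to $\le n-2$ under Poincar\'e duality. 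Concretely, consider the mapping cone (or the pair) and run the following: cap product with the fundamental classes gives a commuting square relating ${\rm H}^r(X,M)\to {\rm H}_{n-r}(X,M^\omega)$ and the corresponding map for $Y$, using $f_\ast[Y]=[X]$ (naturality of cap product under a degree $1$ map). Since $\cap[X]$ and $\cap[Y]$ are isomorphisms (the ${\rm PD}^n$ condition), and $f^\ast$ is an isomorphism on ${\rm H}^r$ for $r\le n-2$ (from $P_{n-2}(f)$ being an equivalence) and on ${\rm H}^{n-1},{\rm H}^n$ by the fundamental-class/orientation data together with ${\rm H}_1C=0$ and duality, one concludes $f_\ast$ is an isomorphism on ${\rm H}_{n-r}$ for all $r$, i.e.\ on all homology groups with all coefficient modules. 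Then Whitehead's theorem for chain complexes over $\Lambda$ (the complexes are free, bounded below, $\pi_1$ matches) yields that $\widehat f_\ast$ is a chain homotopy equivalence, hence $f$ is a homotopy equivalence in ${\rm\bf CW}_0$; the orientation and diagonal data are automatically compatible since $f$ is a morphism of ${\rm PD}^n$--complexes, so $f$ is an isomorphism in ${\rm\bf PD}^n_+/\!\simeq$.

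I expect the main obstacle to be the precise bookkeeping at the ``middle'' degrees $n-1$ and $n$: knowing $P_{n-2}(f)$ is an equivalence only directly gives control through degree $n-2$, and one must extract the isomorphism on ${\rm H}_{n-1}$ and ${\rm H}_n$ (and on cohomology there) purely from Poincar\'e duality plus the degree $1$ hypothesis $f_\ast[Y]=[X]$ and the normalization ${\rm H}_1C=0$, ${\rm H}_0C=\mathbb Z$. This is exactly where the naturality of the cap product with respect to degree $1$ maps must be invoked carefully, and where one uses that a degree $1$ map induces a split surjection on homology (so that the ``difference'' complex carries its own Poincar\'e duality, forcing it to be acyclic). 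This is the classical argument (cf.\ Browder, Wall) that a degree $1$ map between Poincar\'e complexes inducing a $\pi_1$--iso and being ``$(n-2)$--connected enough'' is a homotopy equivalence; the only real content here is checking that the ${\rm\bf H}_0$/$\Lambda$--module framework set up in Sections \ref{chaincomp}--\ref{PDcomp} supports each of these steps verbatim, which it does because all chain complexes in sight are free and finitely generated.
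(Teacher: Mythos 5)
Your proposal is correct and fills in precisely the argument the paper gestures at: the paper's own proof is the single sentence ``This is a consequence of Poincar\'e duality and Whitehead's Theorem,'' and your outline --- pass to universal covers, get $\pi_1(f)$ an isomorphism from $P_1$ factoring through $P_{n-2}$, obtain cohomology isomorphisms in degrees $\le n-2$ from $P_{n-2}(f)$ being an equivalence, then use naturality of $\cap[\,\cdot\,]$ together with $f_\ast[Y]=[X]$ to flip these to homology isomorphisms in degrees $\ge 2$, handle degrees $0,1$ from the normalizations ${\rm H}_0C=\mathbb Z$, ${\rm H}_1C=0$, and finish by Whitehead for free $\Lambda$-chain complexes --- is exactly that argument. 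One small slip that does not affect the conclusion: from $\pi_i(f)$ iso for $i\le n-2$ you only get $H_i(\widehat f)$ iso for $i\le n-3$ and onto at $n-2$; the statement you actually use, however, is the cohomology isomorphism $H^i(X;M)\cong H^i(P_{n-2}X;M)$ for $i\le n-2$ (coming from the pair $(P_{n-2}X,X)$ being $(n-1)$-connected), and that is correct and suffices.
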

\begin{proof}
This is a consequence of Poincar{\'e} duality and Whitehead's Theorem.
\end{proof}
\begin{remark*}
For $n \geq 3$, let $[\frac{n}{2}]$ be the largest integer $\leq n$. Associating with a ${\rm PD}^n$--complex, $X$, the \emph{pre-fundamental triple} $(P_{[\frac{n}{2}]}X, \omega_X, p_{[\frac{n}{2}]\ast}[X])$, there is an analogue of  Lemma \ref{tauplusrefl}, namely, an orientation preserving map between ${\rm PD}^n$--complexes is a homotopy equivalence if and only if the induced map between pre-fundamental triples is an isomorphism. However, pre-fundamental triples do not determine the homotopy type of a ${\rm PD}^n$--complex as in Corollary \ref{class}, which is demonstrated by the fake products $X = (S^n \vee S^n) \cup_{\alpha} e^{2n}$, where $\alpha$ is the sum of the Whitehead product $[\iota_1 , \iota_2]$ and an element $\iota_1 \beta$ with $\beta \in \pi_{2n-1}(S^n)$ having trivial Hopf invariant. Pre-fundamental triples coincide with the fundamental triple for $n = 3$ and $n = 4$. It remains an open problem to enrich the structure of a pre-fundamental triple to obtain an analogue of Corollary \ref{class}.
\end{remark*}
\begin{lemma}\label{mainlem}
Let $X$ and $Y$ be standard ${\rm PD}^n$--complexes for $n \geq 4$ and weakly standard for $n = 3$ and let $f: \tau_+ Y \rightarrow \tau_+ X$ be a morphism in ${\rm\bf Trp}^n_+$.
Then $f$ is $\tau$--realizable by a map $\overline f: Y \rightarrow X$ in ${\rm PD}^n_+$ with $\tau \overline f = f$.
\end{lemma}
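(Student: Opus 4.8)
The plan is to realize the triple morphism $f : \tau_+ Y \to \tau_+ X$ in two stages: first as a chain map, then as an actual map of ${\rm PD}^n$--complexes. So first I would construct a $\varphi$--equivariant chain map $\xi : \widehat C(Y) \to \widehat C(X)$, where $\varphi = \pi_1(f)$, which respects diagonals up to homotopy and, crucially, preserves fundamental classes: $\xi_\ast [Y] = [X]$. Since $X$ and $Y$ are (weakly) standard, their chain complexes have a single top cell $\Lambda[e_n]$, and $\widehat C(Y)_{\leq n-1}$ and $\widehat C(X)_{\leq n-1}$ are (homologically) $(n-1)$--dimensional; the data of $f$ (a map of $(n-2)$--types together with $f_\ast p_{n-2\ast}[Y] = p_{n-2\ast}[X]$ and $\omega$--compatibility) pins down $\xi$ through dimension roughly $n-1$, using that $(P_{n-2}Y)^{n-1} = Y^{n-1}$ and similarly for $X$. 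The surjectivity of $\varphi$ on $\pi_1$ is what lets one lift to the universal covers compatibly. Then one extends $\xi$ over the top cell: the obstruction to extending a chain map that already realizes the required homology class in degree $n-1$ lives in a group that vanishes precisely because of Poincar\'e duality for $X$ (the cap product with $[X]$ being an isomorphism forces $\widehat C(X)$ to be chain homotopy equivalent to its dual, which controls $\operatorname{Hom}$ and $\operatorname{Ext}$ in the top degree), and the condition $f_\ast p_{n-2\ast}[Y] = p_{n-2\ast}[X]$ is exactly what makes the extension carry $[Y]$ to $[X]$.

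The second stage is to promote the chain map $\xi$ to a cellular map $\overline f : Y \to X$ with $\widehat C(\overline f) \simeq \xi$ and $\tau \overline f = f$. For this I would invoke the obstruction theory of \cite{Baues1}: the category ${\rm\bf H}^{k+1}_c$ of homotopy systems of order $(k+1)$ provides an intermediate category between ${\rm\bf CW}_0/\simeq$ and chain complexes, and a chain map between the chain complexes of CW--complexes is realizable by a map of spaces provided certain obstructions, lying in cohomology groups of the source with coefficients in homotopy groups of the target, vanish. Here one builds $\overline f$ skeleton by skeleton. Up to the $(n-2)$--skeleton the map is forced (up to homotopy) by $f$ itself, since $f$ is already a map of $(n-2)$--types and $Y^{n-1} = (P_{n-2}Y)^{n-1}$; the content is in extending over the $(n-1)$--cells and the single $n$--cell of $Y$. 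The obstruction to extending over the $n$--cell lies in $H^n(Y; \pi_{n-1}(X))$, and this is where one uses that $Y$ is (weakly) standard together with Poincar\'e duality: $H^n(Y; \pi_{n-1}X) \cong H_0(Y; \pi_{n-1}X^\omega)$ or a similarly small group, and the class of the obstruction is killed because $\xi$ already does the right thing on the top homology. The compatibility $\tau \overline f = f$ is then automatic: $\overline f$ induces $\varphi$ on $\pi_1$, is $\omega$--compatible by construction, and $\overline f_\ast[Y] = [X]$ because $\widehat C(\overline f)_\ast = \xi_\ast$ sends $[Y]$ to $[X]$, so $\overline f \in {\rm PD}^n_+$ and $P_{n-2}(\overline f) \simeq f$ by construction on skeleta.

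The main obstacle I expect is the extension of the chain map (and then the space-level map) over the top cell while simultaneously controlling the fundamental class. Getting a chain map that agrees with $f$ below the top dimension is comparatively formal, and the realizability machinery of \cite{Baues1} is designed exactly for the passage from chain maps to maps of spaces; but the interplay between the vanishing of the relevant obstruction group and the \emph{precise} value $\xi_\ast[Y] = [X]$ needs care. One has to argue that the set of homotopy classes of extensions over the $n$--cell is a torsor over a group which Poincar\'e duality identifies, and that within this torsor there is exactly the extension carrying $[Y]$ to $[X]$ — equivalently, that the difference class between any two extensions is detected by its effect on $H_n$, and that $f_\ast p_{n-2\ast}[Y] = p_{n-2\ast}[X]$ forces the correct choice. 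Dimension $n=3$ requires the weakly standard hypothesis rather than standard because a ${\rm PD}^3$--complex need not have a single $3$--cell over a genuinely $2$--dimensional subcomplex; the homological $(n-1)$--dimensionality of $X'$ and $C'$ is the substitute that keeps the relevant $\operatorname{Ext}$ groups under control, so the argument must be phrased using ${\rm H}^n(X',B) = 0$ rather than literal dimension. Finally, one should check that the diagonal is respected up to homotopy — this follows because a diagonal on a (weakly) standard complex is determined up to homotopy in the relevant range by the cup-product structure, which $f$ already preserves as a map of $(n-2)$--types, together with Poincar\'e duality in the top degree.
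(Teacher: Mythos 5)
Your overall plan — first build a fundamental-class-preserving chain map $\xi:\widehat C(Y)\to\widehat C(X)$, then realize it as a cellular map via the obstruction theory of \cite{Baues1} — is the right architecture, and you correctly isolate the top cell of the (weakly) standard complexes as the locus of difficulty. But there is a genuine gap at the crucial step, and a secondary technical gap underneath it.

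The crucial gap is your claim that the obstruction to realizing $\xi$ over the top cell ``lies in $H^n(Y;\pi_{n-1}X)$'' and is killed ``because $\xi$ already does the right thing on the top homology'' or because Poincar\'e duality makes the coefficient group small. Neither of these is how the argument works, and neither would work. First, the obstruction for passing from $\mathbf{H}^c_{n-1}$ to $\mathbf{H}^c_n$ lives in $\widehat{\mathrm H}^n(Y,\Gamma_{n-1}X)$, where $\Gamma_{n-1}$ is Whitehead's functor, not $\pi_{n-1}$. Second, this group does \emph{not} vanish by Poincar\'e duality in general — it is the obstruction \emph{element} that vanishes, and one has to prove this. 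The paper's argument for this is not homological smallness: it is that $p:X\to P=P_{n-2}X$ induces an \emph{isomorphism} $p_\ast:\Gamma_{n-1}X\to\Gamma_{n-1}P$ (II.4.8 of \cite{Baues1}), so $\mathcal O(\xi,\eta)$ is detected by $p_\ast\mathcal O(\xi,\eta)=\mathcal O\bigl(r(p)(\xi,\eta)\bigr)$; the homotopy commutativity of the square relating $(\xi,\eta)$ with $r(f)$ and the Postnikov maps $r(p), r(p')$ identifies this with the obstruction of $r(f)r(p')$, which vanishes because that composite is already realizable by $fp'$. Your argument misses both the factoring through the $(n-2)$--type at the level of homotopy systems and, more importantly, the isomorphism of $\Gamma_{n-1}$ under $p_\ast$, which is what makes the transfer work. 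Without this, there is no reason for $\mathcal O(\xi,\eta)$ to vanish; the group it lives in is typically nonzero.

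The secondary gap is in the construction of $\xi$ itself in degree $n-1$. You say the data of $f$ ``pins down $\xi$ through dimension roughly $n-1$,'' but it does not: the restriction $\overline\eta$ of $f$ to the $(n-1)$--skeleton generally does \emph{not} extend to a chain map carrying $[e']$ to $[e]$. The paper has to correct $\widehat C_{n-1}(\overline\eta)$ by a term $d\overline\alpha_n$, and the definition of $\overline\alpha_n$ depends on two facts you do not use: (i) the boundary coefficients $a_m$ of $d[e']=\sum a_m[e'_m]$ generate $\overline{I(\pi_1Y)}$ as a right module (Lemma \ref{abargenerates}, which itself is a consequence of Poincar\'e duality in degrees $n$ and $0$), and (ii) surjectivity of $\varphi$ on $\pi_1$ then ensures $\{\varphi(a_m)\}$ generates $\overline{I(\pi_1X)}$, so the discrepancy $y\in\overline{I(\pi_1X)}\widehat C_nP$ between $f_\ast p'_\ast[e']$ and $p_\ast[e]$ can be written as $\sum\varphi(a_m)z_m$ and absorbed into $\overline\alpha_n([e'_m])=-z_m$. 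This is where surjectivity of $\varphi$ is actually used; your invocation of surjectivity for ``lifting to universal covers'' is not the point. For $n=3$ an additional splitting of $\widehat C_2$ is needed (Lemma \ref{split}), and the argument there routes through fullness of $\widehat C:\mathbf{PD}^3\to\mathbf{PD}^3_\ast$ rather than directly through $\mathbf H^c_n$ obstruction theory; you flag the $n=3$ subtlety but don't supply the substitute mechanism.

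A smaller point: you worry at the end about verifying that $\overline f$ respects diagonals up to homotopy, but a morphism in $\mathbf{PD}^n$ is, by definition, a map in $\mathbf{CW}_0$ compatible with orientation characters; no diagonal compatibility is required of morphisms. So this is not an issue that the paper needs to, or does, address in Lemma \ref{mainlem}.
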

For the proof of Lemma \ref{mainlem}, we use 
\begin{lemma}\label{abargenerates}
Let $X = X' \cup e^n$ be a weakly standard ${\rm PD}^n$--complex. Then there is a generator $[e] \in \widehat C_n(X)$, with $\widehat C_nX = \widehat C_nX' \oplus \Lambda[e]$, corresponding to the cell $e^n$, such that $1 \otimes [e] \in \mathbb Z^{\omega} \otimes_{\Lambda} \widehat C_n X$ is a cycle representing the fundamental class $[X]$. Let $\{ e_m \}_{m \in M}$ be a basis of $\widehat C_{n-1} X = \widehat C_{n-1} X'$. Then the coefficients $\{ a_m \}_{m \in M}, a_m \in \Lambda$ for $m \in M$, of the linear combination $d_n[e] = \sum a_m [e_m]$   generate $\overline{I(\pi_1X)}$ as a right $\Lambda$--module.
\end{lemma}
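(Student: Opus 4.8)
The plan is to analyze the boundary map $d_n$ on the top cell via the chain-level Poincaré duality isomorphism, reducing everything to understanding the cap product with the fundamental class in low degrees. First I would set up the chain complex $\widehat C(X)$ for the weakly standard complex $X = X' \cup e^n$: by hypothesis $\widehat C_n X = \widehat C_n X' \oplus \Lambda[e]$ with $\widehat C_n X'$ homologically concentrated below degree $n$ (since ${\rm H}^n(X',B)=0$ for all $B$). The existence of $[e]$ with $1\otimes[e]$ a cycle representing $[X]\in {\rm H}_n(X,\mathbb Z^\omega)$ follows because the cycle representing the fundamental class must project to a generator in the rank-one free summand; I would make this precise by observing that $\mathbb Z^\omega\otimes_\Lambda \widehat C_n X' $ contributes no $n$-cycles that survive to homology (indeed ${\rm H}_n(X,\mathbb Z^\omega)\cong \mathbb Z$ and the projection to the $[e]$-summand detects it), so after adjusting $[e]$ by an element of $\widehat C_n X'$ and by a unit $\pm g$ we may assume $1\otimes[e]$ itself is the fundamental cycle. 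This also forces $d_n[e]$ to have augmentation-zero-type behavior: applying $\mathbb Z^\omega\otimes_\Lambda(-)$ kills $d_n[e]$ since $1\otimes[e]$ is a cycle, which will be the source of the relation to the augmentation ideal.

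The heart of the argument is the cap product isomorphism $\cap[X]\colon {\rm H}^r(X,M)\xrightarrow{\ \cong\ } {\rm H}_{n-r}(X,M^\omega)$, or rather its chain-level refinement $\cap\,1\otimes x\colon \widehat C(X)^\ast \to \widehat C(X)$, a homotopy equivalence of degree $n$ recorded in the excerpt (Wall). In degree terms, this sends $\widehat C^{\,0} = {}^\omega{\rm Hom}_\Lambda(\widehat C_0 X,{}^\omega\Lambda) \cong \Lambda$ to $\widehat C_n X$, and sends $\widehat C^{\,1}$ to $\widehat C_{n-1} X$. I would track what happens to the generator dual to $\ast \in \widehat C_0 X = \Lambda$: using the explicit diagonal formula $\Delta(e) = \sum c'_{i,\alpha}\otimes c''_{j,\alpha}$ and the cap formula $\psi\otimes(z\otimes c)\mapsto \sum z\psi(c'_{k,\alpha})\otimes c''_{n-k,\alpha}$, capping the degree-zero dual class with $[X]$ produces (up to the normalizations above) exactly $[e]$, because $p_1\Delta\simeq {\rm id}$ forces the degree-$(0,n)$ component of $\Delta(e)$ to be $\ast\otimes[e]$ modulo boundaries. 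Then the cochain differential $\delta\colon \widehat C^{\,0}\to \widehat C^{\,1}$, which is the $\Lambda$-dual (twisted) of $d_1\colon \widehat C_1 X\to \widehat C_0 X=\Lambda$, gets carried by $\cap[X]$ to $d_n\colon \widehat C_n X\to \widehat C_{n-1}X$. Since $\cap[X]$ is a homotopy equivalence it induces an isomorphism on the relevant subquotients, so the $\Lambda$-submodule of $\widehat C_{n-1}X$ generated by the components $\{a_m\}$ of $d_n[e]$ corresponds under duality to the image of $\delta$ on the distinguished generator, i.e. to the submodule of ${}^\omega{\rm Hom}_\Lambda(\widehat C_1 X,{}^\omega\Lambda)$ generated by the twisted duals of the entries of $d_1$. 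Those entries of $d_1$ are, by the standard identification $\widehat C_{\leq 1}(X)\cong$ the chain complex of a presentation of $\pi=\pi_1 X$, precisely elements whose associated right-ideal is the augmentation ideal $I(\pi)$; applying the bar/overline anti-involution converts $I(\pi)$ into $\overline{I(\pi)}$ as a right $\Lambda$-module.

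So the key steps, in order, are: (1) normalize $[e]$ so that $1\otimes[e]$ is the fundamental cycle and record that $d_n[e]$ maps to zero under $\mathbb Z^\omega\otimes_\Lambda(-)$; (2) recall Wall's chain-level duality equivalence $\widehat C(X)^\ast\simeq\widehat C(X)$ and identify, using the diagonal and the cap formula, the image of the canonical degree-zero cochain generator as $[e]$ (up to sign/unit); (3) transport the cochain differential $\delta^0$ across this equivalence to identify the $\Lambda$-span of the entries of $d_n[e]$ with the $\Lambda$-span of the (twisted) entries of $d_1$; (4) identify the latter with $\overline{I(\pi_1 X)}$ via the presentation chain complex and the anti-isomorphism $\overline{\phantom{x}}$. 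The main obstacle I expect is step (2)–(3): making the chain-level cap product genuinely compatible with the cochain differential requires care, since the duality map is only a homotopy equivalence, so one must argue at the level of the mapping-cone or of a well-chosen degree filtration that the induced isomorphism on $\widehat C_{n-1}X/(\text{image of }d_n \text{ from } \widehat C_n X')$ versus $\operatorname{coker}$ of the relevant cochain map actually identifies the submodules generated by these specific elements, rather than merely abstractly isomorphic modules. Handling the freeness and finite generation hypotheses (so that the twisted dual is again a free complex, per the remark after Wall's statement) keeps all the modules in sight concrete enough for this to go through.
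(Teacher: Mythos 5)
Your overall plan is aimed at the right target, but it takes a longer and more delicate route than the paper, and two of its pivotal steps are not established as stated. The paper's actual argument works entirely at the top end of the duality: it uses the isomorphism ${\rm H}^n(X,{}^\omega\Lambda)\cong{\rm H}_0(X,\mathbb Z)\cong\mathbb Z$ directly, notes that since ${\rm H}^n(X',B)=0$ for all $B$ the cokernel of $d^\ast$ in degree $n$ is detected on the summand $\Lambda[e]^\ast$, so that the projection $\varphi\mapsto(d^\ast\varphi)[e]$ has image exactly $I(\pi)$, and then computes $(d^\ast\varphi)[e]=\varphi(d[e])=\sum_m a_m\varphi[e_m]$ to read off that $I(\pi)$ is the left $\Lambda$--span of $\{\overline a_m\}$. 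No diagonal, no cap--product formula, and no chain--level tracking of the duality equivalence are needed; only the single identity $(d^\ast\varphi)[e]=\varphi(d[e])$ and Poincar\'e duality in the one degree $r=n$.

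Your proposal instead works at degree $r=0$ and tries to transport the cochain differential $\delta^0$ (twisted dual of $d_1$) across the chain map $\cap\,1\otimes x\colon C^\ast\to C$ to $d_n$. Two points here are genuine gaps rather than routine verifications. First, your claim that the image of the canonical degree--zero cochain generator under $\cap\,1\otimes x$ is $[e]$ (up to unit) does not follow from the data you cite. The $(0,n)$ Künneth component of $\Delta(e)$ need not be $\ast\otimes e$; the conditions $p_i\Delta\simeq{\rm id}$ only constrain it modulo terms that die under $\varepsilon\otimes{\rm id}$ and modulo chain homotopies, so the image of the generator is in general $\lambda_0[e]+v'$ with $v'\in\widehat C_nX'$ and $\lambda_0$ not obviously a unit. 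Second, even granting that $\cap\,1\otimes x$ is a genuine chain map (so it intertwines $\delta^0$ with $d_n$ strictly, not just up to homotopy), the element it produces is $d_n(\lambda_0[e]+v')=\lambda_0 d_n[e]+d_nv'$, which differs from $d_n[e]$ by the $X'$--boundary term; the lemma, however, is a statement about the specific coefficients $a_m$ of $d_n[e]$ in a chosen basis, and there is no quotienting step in the conclusion that absorbs $d_nv'$. You flag step (2)--(3) as "the main obstacle," and I agree --- but these are not bookkeeping issues to be handled by a filtration argument; they are places where the statement you rely on is not true without correction terms. The paper avoids all of this by never leaving degree $n$. Your step (4), identifying the span of the entries of $d_1$ with $I(\pi)$ via the presentation complex and then applying the anti--involution, is fine and matches the spirit of the paper's final line.
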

\begin{proof}
Poincar{\'e} duality implies ${\rm H}_n(X, \mathbb Z^{\omega}) \cong H^0(X, \mathbb Z) \cong \mathbb Z$. Hence $1 \otimes d$ maps a multiple of the generator $1 \otimes [e]$ of $\mathbb Z^{\omega} \otimes_{\Lambda} \widehat C_n(X) = \mathbb Z^{\omega} \otimes_{\Lambda} \Lambda [e] \cong \mathbb Z$ to zero, that is, there is an $n \in \mathbb N$ such that
\begin{eqnarray*}
0
& = & 1 \otimes d(n(1 \otimes [e])) = n (1 \otimes d[e])
   = n (1 \otimes \sum_{m \in M} a_m [e_m]) \\
& = & n \sum 1.a_{m} \otimes [e_m] = n \sum_{m \in M} {\rm aug}(\overline{a_m}) \otimes [e_m].
\end{eqnarray*}
Since $\mathbb Z^{\omega} \otimes_{\Lambda}D_{n-1} = \mathbb Z^{\omega} \otimes_{\Lambda}\bigoplus_{m \in M} \Lambda [e_{m}] \cong \bigoplus_{m \in M} \mathbb Z^{\omega} \otimes_{\Lambda}\Lambda [e_{m}] = \bigoplus_{m \in M} \mathbb Z$ is free as abelian group, this implies ${\rm aug}(\overline{a_{m}}) = 0$ and hence $\overline{a_m} \in I$ for every $m \in M$. Therefore
$1 \otimes d (1 \otimes [e]) = 0$ and $1 \otimes [e] \in \mathbb Z^{\omega} \otimes_{\Lambda}D_n$
is a cycle representing a generator of the group ${\rm H}_n(X, \mathbb Z^{\omega})$. Without loss of generality we may assume that $e$ is oriented such that $1 \otimes e$ represents the fundamental class $[X]$. Further, Poincar{\'e} duality implies ${\rm H}^n(X, ^{\omega}\!\! \Lambda) \cong \mathbb Z$ and hence $I(\pi) \cong {\rm im}(d^{\ast})[e]$. But, for every $\varphi \in ^{\omega}\!{\rm Hom}_{\Lambda}(C_{n-1}, ^{\omega}\! \Lambda)$,
\begin{equation*}
\big(d^{\ast} \varphi \big) [e] = \varphi (d[e]) = \varphi \big(\sum a_m [e_m] \big) = \sum a_m \varphi [e_m] = \big(\sum\overline{\varphi [e_m]} \overline a_m [e]^{\ast}\big)[e],
\end{equation*}
where $[e]^{\ast}: {{\Lambda}}[e] \rightarrow {{\Lambda}}, [e] \mapsto 1$. Thus $I(\pi)$ is generated by $\{ \overline a_m \}_{m \in M}$ as a left $\Lambda$--module and hence $\overline{I(\pi)}$ is generated by $\{ a_m \}_{m \in M}$ as a right $\Lambda$--module
\end{proof}
\begin{lemma}\label{split}
Let $\overline X = X' \cup_f e^3$ be a weakly standard ${\rm PD}^3$--complex. Then we can choose a homotopy $f \simeq g$ such that $X = X' \cup_g e^3$ admits a splitting $\widehat C_2X = S \oplus d_3(\widehat C_3X')$ as a direct sum of $\Lambda$--modules satisfying $d_3[e] \in S$.
\end{lemma}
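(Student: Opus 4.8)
The plan is to exploit the weak standardness of $\overline X$ to split off $d_3(\widehat C_3 X')$ as a direct summand of $\widehat C_2 X'$, and then to move the attaching map of the $3$--cell by a homotopy so that the boundary of the cell lands in the chosen complement.

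First I would translate the hypothesis. Since $X'$ is $3$--dimensional, the cochain complex ${\rm Hom}_{\Lambda}(\widehat C X', B)$ vanishes in degrees below $-3$, so
\begin{equation*}
{\rm H}^3(X', B) \;=\; {\rm coker}\big(d_3^{\ast}\colon {\rm Hom}_{\Lambda}(\widehat C_2 X', B) \longrightarrow {\rm Hom}_{\Lambda}(\widehat C_3 X', B)\big)
\end{equation*}
for every coefficient module $B$. Weak standardness forces this cokernel to vanish for all $B$; applying this with $B = \widehat C_3 X'$ and lifting ${\rm id}_{\widehat C_3 X'}$ along $d_3^{\ast}$ produces a $\Lambda$--linear retraction of $d_3$, so $d_3\colon \widehat C_3 X' \to \widehat C_2 X'$ is a split monomorphism. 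Hence $D := d_3(\widehat C_3 X')$ is a direct summand of the free module $\widehat C_2 X'$, and I would fix a $\Lambda$--module complement $S$, so that $\widehat C_2 X' = S \oplus D$. Attaching a $3$--cell leaves the $2$--skeleton, and hence $\widehat C_2$, unchanged, while $\widehat C_3 X = \widehat C_3 X' \oplus \Lambda[e]$ and $D = d_3(\widehat C_3 X')$ is the same submodule of $\widehat C_2 X = \widehat C_2 X'$. Writing the boundary of the original cell as $d_3[e] = s + d$ with $s \in S$, $d \in D$, let $c \in \widehat C_3 X'$ be the unique element with $d_3(c) = d$.

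Next I would show that the homotopy class of the attaching map can be modified by $c$. Replacing $f$ by a cellular approximation $S^2 \to (X')^2$, the chain $d_3[e]$ is the image of its homotopy class $[f] \in \pi_2((X')^2)$ under the equivariant Hurewicz isomorphism $\pi_2((X')^2) \xrightarrow{\ \sim\ } \ker(d_2\colon \widehat C_2 X' \to \widehat C_1 X')$, taken on universal covers. Since the pair $(X',(X')^2)$ is $2$--connected, relative Hurewicz identifies $\pi_3(X',(X')^2)$ equivariantly with $\widehat C_3 X'$, and under the Hurewicz maps the boundary $\partial\colon \pi_3(X',(X')^2) \to \pi_2((X')^2)$ corresponds to the cellular differential $d_3$; in particular $\partial(c)$ dies in $\pi_2(X')$. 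Therefore a cellular map $g\colon S^2 \to (X')^2$ representing $[f] - \partial(c)$ is homotopic to $f$ in $X'$, and the $3$--cell attached by $g$ has boundary $d_3[e] - d_3(c) = s \in S$. (Concretely, $g$ arises from $f$ by sliding arcs of the attaching sphere across the $3$--cells of $X'$ prescribed by $c$.) Consequently $X = X' \cup_g e^3$ satisfies $\widehat C_2 X = S \oplus d_3(\widehat C_3 X')$ with $d_3[e] \in S$, as required.

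The step carrying the real content is the second one: the assertion that homotoping $f$ inside $X'$ changes $d_3[e]$ by precisely the submodule $d_3(\widehat C_3 X')$. This is a routine but slightly delicate application of the naturality of the Hurewicz map, the identification $\pi_3(X',(X')^2) \cong \widehat C_3 X'$, and the compatibility of $\partial$ with $d_3$. By contrast, the splitting in the first step is immediate from the cohomological $2$--dimensionality of $X'$, and assembling the conclusion is a one--line computation in $\widehat C_2 X'$.
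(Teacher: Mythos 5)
Your proof is correct and takes essentially the same route as the paper: the splitting of $\widehat C_2 X'$ off $d_3(\widehat C_3 X')$ from the cohomological $2$--dimensionality of $X'$, followed by homotoping the attaching map in $X'$ so that its cellular boundary lands in $S$. The paper's proof is just a terse two-sentence version of yours; you have merely made explicit the (relative) Hurewicz identifications and the long exact sequence of the pair $(X',(X')^2)$ that justify the paper's assertion that the $\mathrm{im}\,d_3'$ component is null-homotopic in $X'$.
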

\begin{proof}
As $X'$ is homologically $2$--dimensional, $\widehat C(\overline X)$ admits a splitting,
\begin{equation*}
\widehat C_2(\overline X) = {\rm im} d'_3 \oplus S,
\end{equation*}
as a direct sum of $\Lambda$--modules, where $d_3': \widehat C_3(X') \rightarrow \widehat C_2(X')$. Thus $d_3[e] \in \widehat C_2(\overline X) = {\rm im} d'_3 \oplus S$ decomposes as a sum $d_3[e] = \alpha + \beta$ with $\alpha \in {\rm im} d'_3$ and $\beta \in S$. Since $\alpha$, viewed as a map $S^2 \rightarrow X'$, is homotopically trivial in $X'$, there is a homotopy $f \simeq g$, where $g$ represents $\beta$, such that $X = X' \cup_g e^3$ has the stated properties. 
\end{proof}

\begin{proof}[Proof of Lemma \ref{mainlem}]
Certain aspects of the proof for the case $n = 3$ differ from that for the case $n \geq 4$. Those parts of the proof pertaining to the case $n = 3$ appear in square brackets [ \ldots ]. [For $n = 3$ we assume that $X = X' \cup_g e^3$ is chosen as in Lemma \ref{split}.]

Given $X = X' \cup_g e^n$ and $Y = Y' \cup_{g'} {e'}^n$ and a morphism $\varphi = \{f\}: \tau(Y) = (P, \omega_Y, t_Y) \rightarrow \tau(X) = (Q, \omega_X, t_X)$ in ${\rm\bf Trp}^n$, the diagram
\begin{equation*}
\xymatrix{
\ \  \ \ \ \ X^{n-1} \subseteq X' \subset X \ar[r]^-p & P = P_{n-2}X \\
\ \ \ \ \ \ \ Y^{n-1} \subseteq Y' \ar[u]^{\overline \eta} \subset Y \ar[r]^-{p'} & Q = P_{n-2}Y, \ar[u]_f}
\end{equation*}
commutes in ${\rm\bf CW}_0$, where $p$ and $p'$ coincide with the identity morphisms on the $(n-1)$--skeleta, and where $\overline \eta$ is the restriction of $f$. For $n \geq 4$, we have $X' = X^{n-1}$ and $Y' = Y^{n-1}$. We obtain the following commutative diagram of chain complexes in ${\rm\bf H}_0$
\begin{equation*}
\xymatrix{
\ \ \ \ \ \ \ \widehat CX^{n-1} \subset \widehat CX \ar[r]^-{p_{\ast}} & \widehat CP \\
\ \ \ \ \ \ \ \widehat CY^{n-1} \ar[u]^{\overline \eta_{\ast}} \subset \widehat CY \ar[r]^-{p'_{\ast}} & \widehat CQ. \ar[u]_{f_{\ast}}}
\end{equation*}
For $n \geq 4$, we construct a morphism $(\xi, \eta): r(Y) \rightarrow r(X)$ in the category ${\rm\bf H}_{n-1}^c$ of homotopy systems of order $(n-1)$ (see Section \ref{homsys}), rendering the diagram
\begin{equation}\label{homcom}
\xymatrix{
r(X)  \ar[r]^-{r(p)} & r(P) \\
r(Y) \ar[r]^-{r(p')} \ar[u]^-{(\xi, \eta)} & r(Q) \ar[u]_-{r(f)}}
\end{equation}
homotopy commutative in ${\rm\bf H}_{n-1}^c$. Here $\xi: \widehat CY \rightarrow \widehat CX$ and $\eta: Y^{n-2} \rightarrow X^{n-2}$ is the restriction of $\overline \eta$ above. 

[For $n = 3$, the map $\overline \eta$ itself need not extend to a map $Y' \rightarrow X'$. But, since $Y'$ is homologically $2$--dimensional, there is a map $\eta': Y' \rightarrow X'$ inducing $\pi_1\eta' = \pi_1\varphi$. Since we may assume that $Q$ is obtained from $Y$ by attaching cells of dimension $\geq 3$, we can choose $f$ representing $\varphi$ with $p\eta' = fp'$.]

We write $\pi = \pi_1X, \pi' = \pi_1Y,  \Lambda = \mathbb Z[\pi]$ and $\Lambda' = \mathbb Z[\pi']$ and let $[e'] \in \widehat C_nY$ and $[e] \in \widehat C_nX$ be the elements corresponding to the $n$--cells $e_n$ and $e'_n$, respectively, $n \geq 3$. Since $\{ f \}$ is a morphism in ${\rm\bf Trp}^n$, we obtain $f_{\ast} p'_{\ast} [Y] = p_{\ast}[X]$ in ${\rm H}_n(P, \mathbb Z^{\omega})$ and hence
\begin{equation*}
f_{\ast}p'_{\ast} [e'] - p_{\ast} [e] \in {\rm im}(d: \widehat C_{n+1}P \rightarrow \widehat C_nP) + \overline{I(\pi)}\widehat C_nP.
\end{equation*}
Thus there are elements $x \in \widehat C_{n+1}P$ and $y \in \overline{I(\pi)}\widehat C_nP$ with
\begin{equation}\label{xydfn}
f_{\ast}p'_{\ast} [e'] - p_{\ast} [e] = dx + y.
\end{equation}
Let $\{ e'_m\}_{m \in M}$ be a basis of $\widehat C_{n-1}Y$. By Lemma \ref{abargenerates}, 
\begin{equation}\label{de'}
d [e'] = \sum a_m [e'_m],  
\end{equation}
for some $a_m \in \Lambda', m \in M$, where $\{ a_m \}_{m \in M}$ generate $\overline{I(\pi')}$ as right $\Lambda'$--module. Since $\varphi = f_{\ast}$ is surjective, $\overline{I(\pi)}$ is generated by $\{ \varphi(a_m) \}_{m \in M}$ as right $\Lambda$--module, and we may write
\begin{equation}\label{zmdfn}
y = \sum_{m \in M} \varphi(a_m) z_m,
\end{equation}
for some $z_m \in \widehat C_nP, m \in M$, since there is a surjection $\bigoplus_{m \in M} \Lambda [m] \twoheadrightarrow \overline{I(\pi)}$ of right $\Lambda$--modules which maps the generator $[m]$ to $\varphi(a_m)$. Then (\ref{xydfn}) implies $d( f_{\ast}p'_{\ast} [e'] - p_{\ast} [e])  = d y = \sum_{m \in M} \varphi(a_m) d z_m$, and hence
\begin{equation}\label{pastde}
p_{\ast} d [e] = \sum_{m \in M} \varphi(a_m) f_{\ast} p'_{\ast} [e_m'] - \sum_{m \in M} \varphi(a_m) d z_m.
\end{equation}
We define the $\varphi$--equivariant homomorphism
\begin{equation}\label{overlinealpha}
\overline \alpha_n: \widehat C_{n-1}Y \rightarrow \widehat C_nP \quad {\rm by} \quad
\overline\alpha_n([e_m']) = -z_m.
\end{equation}
For $n \geq 4$, we define $\xi: \widehat CY \rightarrow \widehat CX$ by $\xi[e'] = [e]$ and 
\begin{equation}\label{xidfn}
\xi_i = 
\begin{cases}
\widehat C_{n-1}(\overline \eta) + d \overline \alpha_n & {\rm for} \ i = n-1, \\
\widehat C_i(\eta) & {\rm for} \ i < n-1. \\
\end{cases}
\end{equation} 
[For $n = 3$ we use the splitting $\widehat C_2Y = S \oplus d_3 \widehat C_3Y'$ in Lemma \ref{split} and define $\xi_i: \widehat C_iY \rightarrow \widehat C_iX$ by $\xi_3[e'] = [e], \xi_3 | \widehat C_3Y' = \widehat C_3 \eta'$, and
\begin{eqnarray*}
&& \xi_2 | S = (\widehat C_2 \eta' + d \overline \alpha_3) | S, \\
&& \xi_2 | d_3 \widehat C_3Y' = \widehat C_2\eta' | d_3 \widehat C_3Y', \\
&&\xi_i = \widehat C_i \eta \quad {\rm for} \ i <2.]
\end{eqnarray*}

To ensure that $\xi$ is a chain map, it is now enough to show that $d\xi[e'] = \xi d[e']$. But, for the injection $\widehat C(p) = p_{\ast}$, we obtain
\begin{eqnarray*}
p_{\ast} \xi d [e']
& = &  p_{\ast} ( \widehat C_{n-1} (\eta') + d \overline \alpha_n) d[e']\\
& = & (p \circ \eta')_{n-1}d[e']+ p_{\ast}\big(d \overline \alpha_n (\sum_{m \in M} a_m [e'_m])\big) \\
& = & (f \circ p')_{n-1} d[e']  + p_{\ast} \sum_{m \in M} \varphi(a_m) d \overline \alpha_n [e'_m] \\
& = & \sum_{m \in M} \varphi(a_m) f_{\ast}p'_{\ast} [e'_m] - p_{\ast} \sum_{m \in M} \varphi(a_m) d z_m \\
& = & p_{\ast} d [e] = p_{\ast} d \xi [e'], \quad {\rm by} \ (\ref{pastde}). 
\end{eqnarray*}

[For $n = 3$, Theorem \ref{pd3funcrefisofull} now implies that there is a map $\overline f: Y \rightarrow X$ such that $\widehat C(\overline f) = \xi$. Then $\tau(\overline f) = f$, $\overline f$ is a degree $1$ map and the proof is complete for $n = 3$.] 

Now let $n \geq 4$. To check that $(\xi, \eta)$ is a morphism in ${\rm \bf H}_{n-1}^c$, note that the attaching map satisfies the cocycle condition and hence, by its definition, the map $\xi_{n-1}$ commutes with attaching maps in $r(X)$ and $r(Y)$, since $\widehat C_{n-1}\overline \eta$ has this property. We must show that Diagram (\ref{homcom}) is homotopy commutative. But $r(f) = (f_{\ast}, \eta)$ and $r(p) = (p_{\ast}, j), r(p') = (p'_{\ast}, j')$, where $j$ and $j'$ are the identity morphisms on $X^{n-2} = P^{n-2}$ and $Y^{n-2} = Q^{n-2}$, respectively. Hence we must find a homotopy $\alpha: (p_{\ast} \xi, \eta) \simeq (f_{\ast} p'_{\ast}, \eta)$ in ${\rm\bf H}_{n-1}^c$, that is, $\varphi$--equivariant maps 
\begin{equation*}
\alpha_{i+1}: \widehat C_iY \rightarrow \widehat C_{i+1}P, \ i \geq n-1,
\end{equation*} 
such that 
\begin{eqnarray}
&& \{ \eta \} + g_{n-1}\alpha_{n-1} = \{ \eta \}, \label{alphafirstcond}\\
&& (p_{\ast} \xi)_i - (f \circ p')_i = \alpha_i d + d \alpha_{i+1} \quad {\rm for} \quad i \geq n-1, \label{alphaseccond}
\end{eqnarray}
where $g_{n-1}$ is the attaching map of $(n-1)$--cells in $P$. Define $\alpha$ by $\alpha_{n+1}[e'] = - x$, see (\ref{xydfn}), and
\begin{equation}\label{alphadfn}
\alpha_i =
\begin{cases}
\overline \alpha_n & {\rm for} \ i= n, \\
0  & {\rm for} \ i < n. 
\end{cases}
\end{equation}
Then $\alpha$ satisfies (\ref{alphafirstcond}) trivially. For $i = n -1$, we obtain
\begin{eqnarray*}
(p_{\ast} \xi)_{n-1} - (f \circ p')_{n-1} 
& = & \xi_{n-1} - \widehat C_{n-1}(f) \\
& = & \xi_{n-1} - \widehat C_{n-1}(\overline \eta) \\
& =  & d \alpha_n, \quad {\rm by \ (\ref{xidfn}) \ and \ (\ref{alphadfn}).}
\end{eqnarray*}
For $i = n$, we evaluate (\ref{alphaseccond}) on $[e']$. By (\ref{xydfn}),
\begin{equation*}
(p_{\ast} \xi - f_{\ast}p'_{\ast})[e'] 
= p_{\ast} [e] - f_{\ast}p'_{\ast}[e'] =  - dx - y.
\end{equation*}
On the other hand,
\begin{eqnarray*}
(d \alpha_{n+1} + \alpha_n d)[e'] 
& = & d \alpha_{n+1}[e'] + \alpha_n \sum_{m \in M} a_m [e'_m], \quad {\rm by} \ (\ref{de'}), \\
& = & - dx - \sum_{m \in M} \varphi(a_m) z_m, \quad {\rm by} \ (\ref{alphadfn}) \ {\rm and} \ (\ref{overlinealpha}), \\
& = & - dx - y \quad {\rm by} \ (\ref{zmdfn}).
\end{eqnarray*}
Hence $\alpha$ satisfies (\ref{alphaseccond}) and Diagram (\ref{homcom}) is homotopy commutative.

To construct a morphism $\overline f: Y \rightarrow X$ in ${\rm\bf PD}^n_+$ with $\tau(\overline f) = f$, consider the obstruction $\mathcal O(\xi, \eta) \in {\rm H}^n(Y, \Gamma_{n-1}X)$ (see Section \ref{homsys}) and note that $p$ induces an isomorphism $p_{\ast}: \Gamma_{n-1}X \rightarrow \Gamma_{n-1}P$, see II.4.8 \cite{Baues1}. Hence the obstruction for the composite $r(p) (\xi, \eta)$ coincides with $p_{\ast} \mathcal O(\xi, \eta)$, where $p_{\ast}$ is an isomorphism. On the other hand, the obstruction for $r(f) r(p')$ vanishes, since this map is $\lambda$--realizable. Thus, by the homotopy commutativity of (\ref{homcom}), $p_{\ast} \mathcal O(\xi, \eta) = \mathcal O (r(f) r(p')) = 0$, so that $\mathcal O(\xi, \eta) = 0$ and there is a $\lambda$--realization $(\xi, \tilde \eta')$ of $(\xi, \eta)$ in ${\rm\bf H}_n^c$. Since ${\rm H}^{n+1}(Y, \Gamma_nX) = 0$, there is a $\lambda$--realization $(\xi, \overline f)$ of $(\xi, \tilde \eta')$ in ${\rm\bf H}_{n+1}^c$. As $Y = Y^n, X = X^n$ and $\xi$ is compatible with fundamental classes by construction, $\overline f: Y \rightarrow X$ is a degree $1$ map in ${\rm\bf PD}^n_+$ realizing the map $f$ in ${\rm \bf Trp}^n$.

\end{proof}

\section{$\rm PD^3$--complexes}\label{pd3section}
The fundamental triple of a ${\rm PD}^3$--complex consists of a group $\pi$, an orientation $\omega$ and an element $t \in {\rm H}_3(\pi, \mathbb Z^{\omega})$. Here we use the standard fact that the homology of a group $\pi$ coincides with the homology of the corresponding Eilenberg--{Mac\,Lane} space $K(\pi,1)$. In general, it is a difficult problem to actually compute ${\rm H}_3(\pi, \mathbb Z^{\omega})$. The homotopy type of a ${\rm PD}^3$--complex is characterized by its fundamental triple, but not every fundamental triple occurs as the fundamental triple of a ${\rm PD}^3$--complex. Via the invariant $\nu_C(t)$ Turaev \cite{Turaev} characterizes those fundamental triples which are realizable by a ${\rm PD}^3$--complex. Let ${\rm\bf Trp}^3_{+,\nu}$ be the full subcategory of ${\rm\bf Trp}^3_+$ consisting of fundamental triples satisfying Turaev's realization condition. Then Theorem \ref{mainthm} implies

\begin{theorem}\label{tauplus3}
The functor
\begin{equation*}
\tau_+: {\rm\bf PD}^3_+ / \simeq \rightarrow {\rm\bf Trp}^3_{+, \nu}
\end{equation*}
reflects isomorphisms and is representative and full.
\end{theorem}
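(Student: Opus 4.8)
The plan is to deduce Theorem \ref{tauplus3} from Theorem \ref{mainthm} together with Turaev's realizability analysis. By Theorem \ref{mainthm}, the functor $\tau_+$ on all of ${\rm\bf PD}^3_+/\!\simeq$ reflects isomorphisms and is full; restricting the target to the full subcategory ${\rm\bf Trp}^3_{+,\nu}$ does not affect either property, since ${\rm\bf Trp}^3_{+,\nu}$ is full in ${\rm\bf Trp}^3_+$ and $\tau_+$ lands inside it. So the only thing that genuinely requires argument is that the corestricted functor is \emph{representative}, i.e.\ every fundamental triple satisfying Turaev's condition $\nu_C(t)$ is realized by some ${\rm PD}^3$--complex.

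First I would recall the precise statement of Turaev's theorem: a triple $(\pi,\omega,t)$ with $\pi$ finitely presented, $\omega:\pi\to\mathbb Z/2\mathbb Z$, and $t\in{\rm H}_3(\pi,\mathbb Z^\omega)$ is the fundamental triple of a ${\rm PD}^3$--complex if and only if a certain homomorphism $\nu_C(t)$ (built from $t$ and a choice of free resolution / chain complex $C$ of $\pi$) is an isomorphism. I would then note that this is exactly the condition defining ${\rm\bf Trp}^3_{+,\nu}$. Hence, for a triple $T=(\pi,\omega,t)$ in ${\rm\bf Trp}^3_{+,\nu}$, Turaev's construction produces a ${\rm PD}^3$--complex $X$ together with an identification of $\tau_+X$ with $T$; the orientation of $X$ can be fixed so that the identification preserves fundamental classes, giving a degree $1$ isomorphism in ${\rm\bf Trp}^3_{+,\nu}$. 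This shows $\tau_+$ is representative.

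To make the deduction clean I would phrase it as: (i) $\tau_+\colon{\rm\bf PD}^3_+/\!\simeq\ \to{\rm\bf Trp}^3_+$ reflects isomorphisms and is full by Theorem \ref{mainthm} (with $n=3$); (ii) the image of $\tau_+$ is contained in ${\rm\bf Trp}^3_{+,\nu}$ because for any ${\rm PD}^3$--complex $X$ the chain complex $\widehat C(X)$ realizes the isomorphism $\nu_C$ (this is the necessity direction of Turaev's criterion, and follows from Poincar\'e duality for $\widehat C(X)$ as recorded in Section \ref{PDcomp}); (iii) hence $\tau_+$ corestricts to a functor ${\rm\bf PD}^3_+/\!\simeq\ \to{\rm\bf Trp}^3_{+,\nu}$ which still reflects isomorphisms and is full, since full subcategory inclusions reflect these properties; (iv) representativity is precisely Turaev's sufficiency direction. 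Combining (iii) and (iv) gives the theorem.

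The main obstacle is not in the formal deduction but in invoking Turaev's result correctly: one must be careful that Turaev's invariant $\nu_C(t)$ as used here is genuinely independent (up to the relevant equivalence) of the auxiliary choice of chain complex $C$, so that ``satisfies Turaev's realization condition'' is a well-defined property of the triple and the subcategory ${\rm\bf Trp}^3_{+,\nu}$ is well-defined; and that the $X$ produced by Turaev can be taken with $\pi_1X$ finitely presented and with a diagonal, so that it is a ${\rm PD}^3$--complex in the sense of Section \ref{PDcomp}. Both points are handled in \cite{Turaev}, so in the write-up I would simply cite the relevant statements there and spend a sentence reconciling the normalization of fundamental classes. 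Everything else is formal.
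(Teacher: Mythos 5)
Your proposal matches the paper's own (tersely stated) argument: Theorem \ref{mainthm} with $n=3$ gives that $\tau_+$ reflects isomorphisms and is full, and restriction of the target to the full subcategory ${\rm\bf Trp}^3_{+,\nu}$ preserves these properties since $\tau_+$ lands there and the inclusion is full; representativity is then exactly Turaev's realizability theorem, the subcategory being defined as those triples satisfying his condition. The paper leaves the corestriction step and the invocation of Turaev implicit, so your write-up is a correct fleshing-out of the same route.
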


\begin{remark*}
Turaev does not mention that the functor $\tau_+$ is actually full and thus only proves the first part of the following corollary which is one of the consequences of Theorem \ref{tauplus3}.
\end{remark*}

\begin{corollary}
The functor $\tau_+$ yields a 1--1 correspondence between oriented homotopy types of ${\rm PD}^3$--complexes and isomorphism types of fundamental triples satisfying Turaev's realization condition. Moreover, for every ${\rm PD}^3$--complex $X$, there is a surjection of groups
\begin{equation*}
\tau_+: {\rm Aut}_+(X) \rightarrow {\rm Aut}(\tau(X)),
\end{equation*}
where ${\rm Aut}_+(X)$ is the group of oriented homotopy equivalences of $X$ in ${\rm\bf PD}^3_+ / \simeq$ and ${\rm Aut}(\tau(X))$ is the group of automorphisms of the triple $\tau(X)$ in ${\rm\bf Trp}^3_+$ which is a subgroup of ${\rm Aut}(\pi_1X)$.
\end{corollary}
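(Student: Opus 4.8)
The plan is to derive the corollary formally from Theorem~\ref{tauplus3}, which supplies the three properties of the functor $\tau_+\colon{\rm\bf PD}^3_+/\simeq\ \to{\rm\bf Trp}^3_{+,\nu}$ that we need: it reflects isomorphisms, is representative, and is full. All the real content already sits in that theorem (hence in Theorem~\ref{mainthm} together with Turaev's realizability criterion), so what remains is essentially category-theoretic bookkeeping.

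For the 1--1 correspondence, I would consider the map sending the oriented homotopy type of a ${\rm PD}^3$--complex $X$ to the isomorphism type of $\tau(X)$ in ${\rm\bf Trp}^3_{+,\nu}$. It is well defined because $\tau_+$, being a functor, carries an oriented homotopy equivalence to an isomorphism of triples; it is surjective because $\tau_+$ is representative. For injectivity, suppose $g\colon\tau(Y)\to\tau(X)$ is an isomorphism in ${\rm\bf Trp}^3_{+,\nu}$. Fullness of $\tau_+$ yields a morphism $f\colon Y\to X$ in ${\rm\bf PD}^3_+$ with $\tau_+(f)=g$, and since $\tau_+$ reflects isomorphisms, $f$ is an oriented homotopy equivalence; hence $X$ and $Y$ have the same oriented homotopy type. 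This gives the bijection.

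For the statement about automorphism groups, I would first identify ${\rm Aut}(\tau(X))$ concretely. Since $n=3$, the $(n-2)$--type of $X$ is $P_1X=K(\pi_1X,1)$, and base-point-preserving homotopy classes of self-maps of an Eilenberg--{Mac\,Lane} space $K(\pi,1)$ are in bijection with endomorphisms of $\pi$; thus an automorphism of the triple $(K(\pi_1X,1),\omega_X,t_X)$ in ${\rm\bf Trp}^3_+$ is exactly an automorphism $\varphi$ of $\pi_1X$ satisfying $\omega_X\varphi=\omega_X$ and $\varphi_\ast t_X=t_X$, which realizes ${\rm Aut}(\tau(X))$ as a subgroup of ${\rm Aut}(\pi_1X)$. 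Since $\tau_+$ is a functor on ${\rm\bf PD}^3_+/\simeq$ (where composition is well defined, so that ${\rm Aut}_+(X)$ is indeed a group), it restricts to a group homomorphism $\tau_+\colon{\rm Aut}_+(X)\to{\rm Aut}(\tau(X))$. Surjectivity follows exactly as before: given $\varphi\in{\rm Aut}(\tau(X))$, fullness produces $f\colon X\to X$ in ${\rm\bf PD}^3_+$ with $\tau_+(f)=\varphi$, and reflection of isomorphisms forces $f$ to be an oriented homotopy equivalence, i.e. $f\in{\rm Aut}_+(X)$ with $\tau_+(f)=\varphi$.

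The only step that is not pure formalism is the identification of morphisms of fundamental triples with homomorphisms of fundamental groups in dimension $3$, i.e. the fact that $[K(\pi,1),K(\pi',1)]$ in ${\rm\bf CW}_0/\simeq$ is ${\rm Hom}(\pi,\pi')$; this is a standard consequence of obstruction theory for aspherical targets and is the mildest of obstacles. Everything else is a direct appeal to the three properties of $\tau_+$ established in Theorem~\ref{tauplus3}.
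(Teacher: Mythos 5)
Your proposal is correct and is exactly the argument the paper intends: the paper states the corollary immediately after Theorem~\ref{tauplus3} without further proof, treating it as the routine categorical consequence of the functor being representative, full, and isomorphism-reflecting, together with the standard identification of a $1$--type with $K(\pi_1X,1)$ that the paper records at the start of Section~\ref{pd3section}. The only items you spell out beyond the paper's terse statement (well-definedness of the map on isomorphism classes, the identification of $\mathrm{Aut}(\tau(X))$ inside $\mathrm{Aut}(\pi_1X)$ via obstruction theory for aspherical targets) are precisely the steps the paper leaves implicit.
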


As every $3$--manifold has the homotopy type of a finite standard ${\rm PD}^3$--complex, the question arises which fundamental triples in ${\rm\bf Trp}^3_+$ correspond to finite standard ${\rm PD}^3$--complexes. While Turaev does not discuss this question, we use the concept of ${\rm PD}^3$--chain complexes (see Section \ref{PDcomp}) in the category ${\rm\bf PD}_{\ast}^3$ to do so.

\begin{theorem}\label{pd3funcrefisofull}
The functor $\widehat C: {\rm\bf PD}^3/\simeq \ \longrightarrow {\rm\bf PD}^3_{\ast}/\simeq$ reflects isomorphisms and is representative and full.
\end{theorem}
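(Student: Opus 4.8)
The plan is to treat the three assertions separately. That $\widehat C$ reflects isomorphisms is formal, as in Lemma~\ref{tauplusrefl}: if $\widehat C(f)$ is an equivalence in ${\rm\bf PD}^3_{\ast}/\simeq$, its underlying $\varphi$--equivariant chain map is a chain homotopy equivalence over the isomorphism $\varphi=\pi_1(f)$, so $\pi_1(f)$ is an isomorphism and $\widehat f$ induces isomorphisms on the homology of the universal covers; Whitehead's Theorem then gives that $f$ is a homotopy equivalence in ${\rm\bf CW}_0$. The remaining two assertions rest on the obstruction theory for homotopy systems of \cite{Baues1} (see Section~\ref{homsys}), applied in the convenient case of dimension $3$, the recurring point being that the diagonal carried by a ${\rm PD}^3$--chain complex supplies precisely the quadratic datum that a bare chain complex lacks in dimension $3$.

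For representativity, let $C=((\pi,C),\omega,[C],\Delta)$ be a ${\rm PD}^3$--chain complex. Poincar{\'e} duality --- the chain equivalence $\cap[C]\colon C^{\ast}\simeq C$ of degree $3$ --- forces the homology of $C$, with any twisted coefficients, to vanish above degree $3$, so by a Wall-type reduction together with Lemma~\ref{2real} we may assume $C$ is weakly standard, in particular $C_i=0$ for $i>3$. Fix $X^2$ in ${\rm\bf CW}_0$ with $\widehat C(X^2)\cong C_{\le 2}$. Since $d_2d_3=0$, the boundary $d_3$ carries every free generator of $C_3$ into $\ker(d_2)=H_2(\widehat{X^2})=\pi_2(X^2)$, and since the Hurewicz map of the simply connected complex $\widehat{X^2}$ is an \emph{isomorphism} in degree $2$, we may attach $3$--cells to $X^2$ along representatives of these classes to obtain a $3$--dimensional complex $X$ with $\widehat C(X)\cong C$ as chain complexes in ${\rm\bf H}_0$; no cells beyond dimension $3$ are needed, since $C$ is weakly standard. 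It remains to equip $X$ with a cellular diagonal inducing the prescribed $\Delta$: by the obstruction theory of \cite{Baues1} the chain-level diagonal $\Delta$ is realized by a cellular diagonal $\Delta_X\colon X\to X\times X$, the one obstruction not vanishing automatically lying in $H^3(X,\Gamma_2(X\times X))$ and being killed by the fact that $\Delta$ is a diagonal. Then $\widehat C(X)\cong C$ in ${\rm\bf PD}^3_{\ast}/\simeq$; and since the chain data of $X$ is $C$, which satisfies the cap-product condition, $X$ is a ${\rm PD}^3$--complex, so $X$ realizes $C$.

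For fullness, let $X$ and $Y$ be ${\rm PD}^3$--complexes, which by Remark~\ref{Wallres} we may take to be weakly standard and hence $3$--dimensional, and let $\xi\colon\widehat C(X)\to\widehat C(Y)$ be a morphism in ${\rm\bf PD}^3_{\ast}$, that is, a $\varphi$--equivariant chain map with $\omega_X=\omega_Y\varphi$ and $(\xi\otimes\xi)\Delta_X\simeq\Delta_Y\xi$. Choosing a cellular map $\eta\colon X^1\to Y^1$ inducing $\varphi$, the pair $(\xi,\eta)$ is a morphism $r(X)\to r(Y)$ in the category ${\rm\bf H}^2_c$ of homotopy systems of order $2$. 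As in the proof of Lemma~\ref{mainlem} one lifts it to ${\rm\bf H}^3_c$ and then to ${\rm\bf H}^4_c$: the obstruction $\mathcal O(\xi,\eta)\in H^3(X,\Gamma_2Y)$ to the first lift has to be shown to vanish, while the obstruction to the second lift lies in $H^4(X,\Gamma_3Y)=0$ because $\dim X=3$; and as $X=X^3$ and $Y=Y^3$, an object of ${\rm\bf H}^4_c$ over $X$ with target $Y$ is precisely a cellular map, so the lift produces $f\colon X\to Y$ with $\widehat C(f)\simeq\xi$. Finally $\omega_X=\omega_Y\pi_1(f)$ makes $f$ a morphism in ${\rm\bf PD}^3$, and, the diagonal compatibility of $\widehat C(f)$ holding automatically for a map of spaces, $\widehat C(f)\simeq\xi$ in ${\rm\bf PD}^3_{\ast}$; thus $\xi$ is $\widehat C$--realizable.

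The main obstacle, common to both realizability arguments, is the vanishing of the single obstruction that is not killed by dimension: $\mathcal O(\xi,\eta)\in H^3(X,\Gamma_2Y)$ in the fullness step, and the class in $H^3(X,\Gamma_2(X\times X))$ in the representativity step. For $n\ge 4$ in Lemma~\ref{mainlem} the analogous obstruction was disposed of by comparison with the already realizable composite $r(f)\,r(p')$; here no such comparison is available, and the vanishing must be extracted directly from the $\omega$--twisted cup product structure, by identifying the obstruction class of \cite{Baues1} with a cocycle built from the diagonal, so that the hypothesis that $\xi$ respects the diagonals --- respectively that $\Delta$ is a diagonal --- exhibits it as a coboundary. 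Everything else is either formal (reflection of isomorphisms, naturality of the diagonal) or follows from the fact that, once Poincar{\'e} duality has been used to reduce to weakly standard representatives, all complexes involved are $3$--dimensional and so have no cohomology above degree $3$.
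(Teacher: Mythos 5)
Your overall scaffolding (Whitehead for reflection of isomorphisms; obstruction theory for homotopy systems; dimension or Poincar{\'e} duality killing the higher obstructions) runs parallel to the paper, which factors $\widehat C$ through the category ${\rm\bf PD}^3_{[3]}$ of ${\rm PD}^3$--homotopy systems and invokes Theorems \ref{pdnfuncequ} and \ref{pdnfuncdet}. But there is a genuine gap in your argument, and also a misdirection of effort. You isolate an obstruction $\mathcal O(\xi,\eta)\in {\rm H}^3(X,\Gamma_2 Y)$ (for fullness) and a class in ${\rm H}^3(X,\Gamma_2(X\times X))$ (for representativity) as ``the single obstruction not killed by dimension'' and then merely announce, without any computation, that it ``must be extracted directly from the $\omega$--twisted cup product structure'' or is ``killed by the fact that $\Delta$ is a diagonal.'' That is exactly the step you would need to supply, and your sketch does not supply it. Moreover, the category ${\rm\bf H}^c_2$ you lift through is not part of the framework set up in Section \ref{homsys}, which only applies for $k\geq 3$.

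The deeper issue is that your difficulty is, in fact, spurious. The functor $C\colon {\rm\bf H}_3^c/\!\simeq\,\to\,{\rm\bf H}_0/\!\simeq$ is full and faithful by Theorems III 2.9 and III 2.12 of \cite{Baues1}, and since every ${\rm PD}^3$--chain complex is $2$--realizable by Lemma \ref{2real}, $C$ restricts to an equivalence of the relevant categories (this is precisely Theorem \ref{pdnfuncequ}). Consequently the lift of any chain map between $2$--realizable chain complexes to a morphism of homotopy systems of order $3$ is automatic: no diagonal and no Poincar{\'e} duality are needed for this step, and the obstruction you isolate vanishes tautologically. The same equivalence (which is also monoidal) is what realizes the chain-level diagonal by a diagonal on the homotopy system. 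Poincar{\'e} duality then enters only in passing from ${\rm\bf H}^c_3$ upwards: the paper shows (Theorem \ref{pdnfuncdet}) that $\widehat{\rm H}^{n+1}(Y,\Gamma_nY)=\widehat{\rm H}^{n+2}(Y,\Gamma_nY)=0$ for a ${\rm PD}^n$--homotopy system, so $\lambda$--realizability and fullness of $r$ follow from Propositions \ref{realize}, \ref{objrealize} and Lemma \ref{diagreal0}. In short, the paper's proof has nothing to discharge in degree $3$; you should not attempt to discover a cup-product argument there, but instead cite the equivalence $C$ and place Poincar{\'e} duality where it belongs, in degrees $\geq 4$.
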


\begin{proof}
This follows from Theorems \ref{pdnfuncequ} and \ref{pdnfuncdet} in Section \ref{pdhomsys}.
\end{proof}

\begin{corollary}\label{onetoonepd3}
The functor $\widehat C$ yields a 1--1 correspondence between homotopy types of ${\rm PD}^3$--complexes and homotopy types of ${\rm PD}^3$--chain complexes. Moreover, for every ${\rm PD}^3$--complex $X$ there is a surjection of groups
\begin{equation*}
\widehat C: {\rm Aut}(X) \longrightarrow {\rm Aut}(\widehat C(X)).
\end{equation*}
\end{corollary}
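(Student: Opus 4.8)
The statement to prove is Corollary \ref{onetoonepd3}, which asserts that $\widehat C$ yields a 1--1 correspondence between homotopy types of ${\rm PD}^3$--complexes and homotopy types of ${\rm PD}^3$--chain complexes, together with surjectivity of $\widehat C: {\rm Aut}(X) \to {\rm Aut}(\widehat C(X))$ for each ${\rm PD}^3$--complex $X$.

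The plan is to deduce everything formally from Theorem \ref{pd3funcrefisofull}, which states that $\widehat C: {\rm\bf PD}^3/\!\simeq\ \to {\rm\bf PD}^3_\ast/\!\simeq$ reflects isomorphisms and is representative and full. First I would argue that a functor which is representative, full, and reflects isomorphisms induces a bijection on isomorphism classes of objects. Representativeness gives surjectivity of the induced map on isomorphism classes: every ${\rm PD}^3$--chain complex $C$ is $\widehat C$--realizable, so there is a ${\rm PD}^3$--complex $X$ with $\widehat C(X) \cong C$. For injectivity, suppose $\widehat C(X) \cong \widehat C(X')$ in ${\rm\bf PD}^3_\ast/\!\simeq$; since $\widehat C$ is full, this isomorphism is realized by a morphism $g: X \to X'$ in ${\rm\bf PD}^3/\!\simeq$, and since $\widehat C$ reflects isomorphisms and $\widehat C(g)$ is an isomorphism, $g$ itself is a homotopy equivalence, whence $X \simeq X'$. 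This establishes the 1--1 correspondence on homotopy types.

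For the second assertion, fix a ${\rm PD}^3$--complex $X$ and consider the group homomorphism $\widehat C: {\rm Aut}(X) \to {\rm Aut}(\widehat C(X))$ induced on automorphism groups in the respective homotopy categories (it is a homomorphism because $\widehat C$ is a functor). To see it is surjective, let $\theta \in {\rm Aut}(\widehat C(X))$ be any self--homotopy--equivalence of the chain complex. Since $\widehat C$ is full, there is a morphism $f: X \to X$ in ${\rm\bf PD}^3/\!\simeq$ with $\widehat C(f) = \theta$. Because $\theta$ is an isomorphism and $\widehat C$ reflects isomorphisms, $f$ is a homotopy equivalence, i.e. $f \in {\rm Aut}(X)$, and by construction $\widehat C(f) = \theta$. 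Hence $\widehat C$ is onto.

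The routine point requiring a little care is bookkeeping about what "isomorphism'' means in each category: an isomorphism in ${\rm\bf PD}^3/\!\simeq$ is a homotopy equivalence of ${\rm PD}^3$--complexes, and an isomorphism in ${\rm\bf PD}^3_\ast/\!\simeq$ is a homotopy equivalence of ${\rm PD}^3$--chain complexes in the sense of Section \ref{PDcomp}; the phrase "reflects isomorphisms'' in Theorem \ref{pd3funcrefisofull} is exactly what bridges these. There is no real obstacle here: the entire content has been pushed into Theorem \ref{pd3funcrefisofull} (which itself rests on Theorems \ref{pdnfuncequ} and \ref{pdnfuncdet}), and the corollary is the standard formal consequence that a representative, full, isomorphism--reflecting functor is "essentially surjective and conservative enough'' to induce a bijection on iso--classes and epimorphisms on automorphism groups.

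\begin{proof}
Both assertions are formal consequences of Theorem \ref{pd3funcrefisofull}. Since $\widehat C$ is representative, every ${\rm PD}^3$--chain complex is homotopy equivalent to one of the form $\widehat C(X)$; thus the induced map on homotopy types is surjective. If $\widehat C(X) \simeq \widehat C(X')$, then, as $\widehat C$ is full, this homotopy equivalence is of the form $\widehat C(g)$ for some $g: X \to X'$, and, as $\widehat C$ reflects isomorphisms, $g$ is a homotopy equivalence; hence $X \simeq X'$ and the induced map on homotopy types is injective. This proves the 1--1 correspondence.

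For a fixed ${\rm PD}^3$--complex $X$, functoriality of $\widehat C$ gives the group homomorphism $\widehat C: {\rm Aut}(X) \to {\rm Aut}(\widehat C(X))$. Given $\theta \in {\rm Aut}(\widehat C(X))$, fullness yields $f: X \to X$ with $\widehat C(f) = \theta$; since $\theta$ is an isomorphism and $\widehat C$ reflects isomorphisms, $f \in {\rm Aut}(X)$. Hence $\widehat C$ is surjective on automorphism groups.
\end{proof}
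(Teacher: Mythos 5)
Your proof is correct and matches the paper's intent: the paper leaves Corollary \ref{onetoonepd3} without an explicit argument precisely because it is the standard formal consequence of Theorem \ref{pd3funcrefisofull} (a representative, full, isomorphism--reflecting functor induces a bijection on isomorphism classes and surjections on automorphism groups), and you have supplied exactly that routine deduction.
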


\begin{remark}
Corollary \ref{onetoonepd3} implies that the diagonal of every ${\rm PD}^3$--chain complex is, in fact, homotopy co--associative and homotopy co--commutative.
\end{remark}

Connecting the functor $\widehat C$ and the functor $\tau_+$, we obtain the diagram
\begin{equation*}
\xymatrix{
{\rm\bf PD}^3_+ / \simeq \ar[rr]^{\widehat C} \ar[dr]_{\tau_+} && 
{\rm\bf PD}^3_{\ast+} / \simeq \ar[dl]^{\tau_{\ast}} \\
& {\rm\bf Trp}^3_{+, \nu} &}
\end{equation*}
where $\tau_+$ determines $\tau_{\ast}$ together with a natural isomorphism $\tau_{\ast} \widehat C \cong \tau_+$.

\begin{corollary}
All of the functors $\widehat C, \tau_+$ and $\tau_{\ast}$ reflect isomorphisms and are full and representative.
\end{corollary}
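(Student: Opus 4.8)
The plan is to assemble the statement from the three functors already under discussion. First, recall that Theorem~\ref{tauplus3} establishes that $\tau_+: {\rm\bf PD}^3_+/\simeq\ \to {\rm\bf Trp}^3_{+,\nu}$ reflects isomorphisms and is representative and full, and that Theorem~\ref{pd3funcrefisofull} establishes the same three properties for $\widehat C: {\rm\bf PD}^3/\simeq\ \to {\rm\bf PD}^3_\ast/\simeq$. The only new player is $\tau_\ast: {\rm\bf PD}^3_{\ast+}/\simeq\ \to {\rm\bf Trp}^3_{+,\nu}$, which, as noted just above the corollary, is defined so that $\tau_\ast\widehat C\cong\tau_+$ naturally. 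So the first step is simply to make precise that $\tau_\ast$ sends a ${\rm PD}^3$--chain complex $((\pi,C),\omega,[C],\Delta)$ to the triple $(K(\pi,1),\omega,c_\ast[C])$, where $c: C\to \widehat C(K(\pi,1))$ is the classifying chain map, and to observe that $\tau_\ast\widehat C\cong\tau_+$ holds because for a ${\rm PD}^3$--complex $X$ the chain complex $\widehat C(X)$ has the same fundamental group, orientation character, and image of fundamental class under the map to $K(\pi_1X,1)$.

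The bulk of the argument is then a diagram chase. For \textbf{reflecting isomorphisms}: $\widehat C$ reflects isomorphisms by Theorem~\ref{pd3funcrefisofull} and $\tau_+$ reflects isomorphisms by Theorem~\ref{tauplus3}; since $\tau_\ast\widehat C\cong\tau_+$, a morphism $g$ of ${\rm PD}^3$--chain complexes with $\tau_\ast(g)$ an equivalence need not a priori come from a ${\rm PD}^3$--complex, so here I would invoke that $\widehat C$ is \emph{representative} (Theorem~\ref{pd3funcrefisofull}): every ${\rm PD}^3$--chain complex is $\widehat C$--realizable, so write $g$ up to equivalence as $\widehat C(f)$ for some morphism $f$ of ${\rm PD}^3$--complexes (using fullness of $\widehat C$ to realize the morphism, not just the objects); then $\tau_+(f)\cong\tau_\ast(g)$ is an equivalence, so $f$ is an equivalence by Theorem~\ref{tauplus3}, whence $g=\widehat C(f)$ is an equivalence. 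For \textbf{representativity}: given a triple $T$ in ${\rm\bf Trp}^3_{+,\nu}$, Theorem~\ref{tauplus3} gives a ${\rm PD}^3$--complex $X$ with $\tau_+(X)\cong T$, and then $\widehat C(X)$ is a ${\rm PD}^3$--chain complex with $\tau_\ast(\widehat C(X))\cong\tau_+(X)\cong T$; and $\widehat C$ itself is representative by Theorem~\ref{pd3funcrefisofull}. For \textbf{fullness}: given a morphism $\bar h$ of triples between $\tau_\ast(\widehat C(X))$ and $\tau_\ast(\widehat C(Y))$, identify it with a morphism between $\tau_+(X)$ and $\tau_+(Y)$ via the natural isomorphism, lift it through the full functor $\tau_+$ to a morphism $f: X\to Y$ of ${\rm PD}^3$--complexes, and set the lift to be $\widehat C(f)$; naturality of $\tau_\ast\widehat C\cong\tau_+$ guarantees $\tau_\ast(\widehat C(f))=\bar h$ up to the identification. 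The fullness of $\widehat C$ is Theorem~\ref{pd3funcrefisofull}, and the fullness of $\tau_+$ we already noted.

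I expect the main obstacle to be bookkeeping rather than mathematical depth: one must be careful that ``$\tau_+$ determines $\tau_\ast$ together with a natural isomorphism $\tau_\ast\widehat C\cong\tau_+$'' is used consistently, i.e.\ that the comparisons of images of fundamental classes respect the equivalences being tracked, and that ``representative'' for $\widehat C$ is genuinely available (it is, by Theorem~\ref{pd3funcrefisofull}) so that every object of ${\rm\bf PD}^3_{\ast+}/\simeq$ is hit, allowing the reflection-of-isomorphisms argument to reduce to the known cases. Once the realization statement for $\widehat C$ is in hand, each of the three properties for $\tau_\ast$ follows formally from the corresponding property for $\tau_+$ by transport along $\widehat C$, so the proof is essentially a verification that the triangle of functors commutes up to coherent natural isomorphism and that ``representative + full + reflects isos'' is stable under such transport.
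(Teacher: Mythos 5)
Your proposal is correct and matches the paper's intent: the corollary is a formal consequence of Theorems~\ref{tauplus3} and~\ref{pd3funcrefisofull} together with the natural isomorphism $\tau_\ast \widehat C \cong \tau_+$ in the commuting triangle, and the three properties transfer from $\widehat C$ and $\tau_+$ to $\tau_\ast$ by exactly the bookkeeping you describe (representativity of $\widehat C$ reduces everything to the image, and then fullness and reflection of isomorphisms pass along the triangle). The only point you might note more explicitly is that restricting Theorem~\ref{pd3funcrefisofull}'s statement for $\widehat C: {\rm\bf PD}^3/\simeq\to{\rm\bf PD}^3_\ast/\simeq$ to the degree--one subcategories ${\rm\bf PD}^3_+/\simeq\to{\rm\bf PD}^3_{\ast+}/\simeq$ retains the three properties, which is immediate since objects are unchanged and a morphism is degree one precisely when its image under $\widehat C$ is.
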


By Remark \ref{corr}, the functor $\widehat C$ yields a  1--1 correspondence between homotopy types of finite standard ${\rm PD}^3$--complexes and finite standard ${\rm PD}^3$--chain complexes, respectively.

\section{Realizability of $\rm PD^4$--chain complexes}\label{4chainrealize}
Given a ${\rm PD}^4$--chain complex $C$, we define an invariant $\mathcal O (C)$ which vanishes if and only if $C$ is realizable by a ${\rm PD}^4$--complex. To this end we recall the \emph{quadratic functor $\Gamma$} (see also (4.1) p. 13 in \cite{Baues1}). A function $f: A \rightarrow B$ between abelian groups is called a \emph{quadratic map} if $f(-a) = f(a)$, for $a \in A$, and if the function $A \times A \rightarrow B, (a,b) \mapsto f(a+b) - f(a) - f(b)$ is bilinear. There is a \emph{universal quadratic map}
\begin{equation*}
\gamma: A \rightarrow \Gamma(A),
\end{equation*}
such that for all quadratic maps $f: A \rightarrow B$ there is a unique homomorphism $f^{\square}: \Gamma(A) \rightarrow B$ satisfying $f^{\square} \gamma = f$. Using the cross effect of $\gamma$, we obtain the \emph{Whitehead product map} 
\begin{eqnarray*}
P: A \otimes A & \longrightarrow & \Gamma(A), \\
a \otimes b & \longmapsto & [a,b] = \gamma(a+b) - \gamma(a) - \gamma(b).
\end{eqnarray*}
The \emph{exterior product $\Lambda^2A$} of the abelian group $A$ is defined so that we obtain the natural exact sequence
\begin{equation}\label{themapH}
\Gamma(A) \stackrel{H}{\longrightarrow} A \otimes A \longrightarrow \Lambda^2 A \longrightarrow 0,
\end{equation}
where $H$ maps $\gamma(a)$ to $a \otimes a$ for $a \in A$ (see also p.14 in \cite{Baues1}). The composite $P H: \Gamma(A) \rightarrow \Gamma(A)$ coincides with $2{\rm id}_{\Gamma(A)}$, in fact, $P H$ maps $\gamma(a)$ to $[a,a] = 2\gamma(a)$. Given a ${\rm CW}$--complex $X$, there is a natural isomorphism $\Gamma_3(X) \cong \Gamma(\pi_2X)$, by an old result of J.H.C. Whitehead \cite{Whitehead2}, where $\Gamma_3$ is \emph{Whitehead's functor} in \emph{A Certain Exact Sequence} \cite{Whitehead2}. 

\begin{theorem}\label{pd4real}
Let $C = ( (\pi, C), \omega, [C], \Delta)$ be a ${\rm PD}^4$--chain complex with homology module ${\rm H}_2(C, \Lambda) = H_2$. Then there is an invariant
\begin{equation*}
\mathcal O(C) \in {\rm H}_0(\pi, \Lambda^2H_2^{\omega})
\end{equation*}
with $\mathcal O(C) = 0$ if and only if there is a ${\rm PD}^4$--complex $X$ such that $\widehat C(X)$ is isomorphic to $C$ in ${\rm\bf PD}^4_{\ast} / \simeq$. Moreover, if $\mathcal O(C) = 0$, the group 
\begin{equation*}
\ker \big(H_{\ast}: {\rm H}_0(\pi, \Gamma(H_2^{\omega}) \longrightarrow {\rm H}_0(\pi, H_2^{\omega} \otimes H_2^{\omega}) \big)
\end{equation*}
acts transitively and effectively on the set ${\rm Real}_{\widehat C}(C)$ of realizations of $C$ in ${\rm\bf PD}^4 / \simeq$. Here $\ker H_{\ast}$ is $2$--torsion.
\end{theorem}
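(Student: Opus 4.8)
The plan is to build $X$ from $C$ one skeleton at a time, using the obstruction theory of homotopy systems recalled in Section \ref{homsys}, and to read off the obstruction $\mathcal O(C)$ as the single potential failure occurring in dimension $4$. First I would invoke Lemma \ref{2real} to replace $C$ by a homotopy equivalent $2$--realizable ${\rm PD}^4$--chain complex, and by Remark \ref{corr} and Wall's results (Remark \ref{Wallres}) I may assume $C$ is standard, so $C_i = 0$ for $i > 4$ and $C_4 = \Lambda[e_4]$. Pick a CW--complex $X^2$ with $\widehat C(X^2) \cong C_{\leq 2}$; since ${\rm H}_1 C = 0$ this has the right fundamental group. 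The attaching maps of the $3$--cells are prescribed by $d_3: C_3 \to C_2$, so I can form $X^3$ with $\widehat C(X^3) \cong C_{\leq 3}$. The chain map $d_4: C_4 \to C_3$ is a $1$--cycle in the sense needed to attach the top cell, and attaching $e^4$ along a map realizing $d_4[e_4]$ produces a CW--complex $X$ with $\widehat C(X) \cong C$ as chain complexes; the diagonal $\Delta$ on $C$ is then realized up to homotopy by $\widehat C$ of a cellular diagonal on $X$ (using that a diagonal in ${\rm\bf H}_0$ that is homotopy co-associative and homotopy co-commutative — which it is, being part of a ${\rm PD}^4$--chain complex, or after adjusting — is realizable; here one may need to quote the realizability discussion of Section \ref{chaincomp}). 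Poincaré duality for $C$ then transfers to $X$, so $X$ is a ${\rm PD}^4$--complex, provided every step went through.

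The one place where the construction can fail is the attachment of $e^4$: the element $d_4[e_4] \in C_3$ must be realized by an actual map $S^3 \to X^3$, and this is possible exactly when the corresponding class lifts through the boundary map $\pi_3(X^3) \to \widehat C_3(X^3)$ in the sense of homotopy systems of order $4$, i.e.\ when a secondary obstruction vanishes. By the general obstruction theory of \cite{Baues1} (the functors ${\rm\bf H}^{k+1}_c$, $\lambda$--realization, and the groups $\Gamma_{k}$), the obstruction to extending the order-$3$ homotopy system $r(X^3)$ with the prescribed $d_4$ to an order-$4$ system — equivalently, to realizing $C$ — lives in a cohomology group built from $\Gamma_3(X^3)/{\rm im}(\text{coboundary})$. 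Using Whitehead's isomorphism $\Gamma_3(X^3) \cong \Gamma(\pi_2 X^3) = \Gamma(\pi_2)$ and identifying $\pi_2 = {\rm H}_2(\widehat C(X^3)) = H_2$ (here Poincaré duality and ${\rm H}_1 C = 0$ are used to see that $\pi_2$ is the relevant homology module), the receptacle reduces: the image of the quadratic map $H$ of \eqref{themapH} accounts for the part of $\Gamma(H_2)$ killed by the known relations coming from $d_3$ and the diagonal, leaving the cokernel, which by the exact sequence \eqref{themapH} is $\Lambda^2 H_2$. Twisting by $\omega$ (because the top cell interacts with the orientation character) and passing to coinvariants under $\pi$ — because the obstruction is a single class attached to the single $4$--cell and is well-defined only up to the $\Lambda$--action — gives the target ${\rm H}_0(\pi, \Lambda^2 H_2^{\omega})$. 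So I would define $\mathcal O(C)$ to be this obstruction class and argue that $\mathcal O(C) = 0$ iff the attachment, and hence the whole construction, succeeds; conversely any ${\rm PD}^4$--complex $X$ with $\widehat C(X) \cong C$ exhibits such a realization, forcing $\mathcal O(C) = 0$.

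For the transitivity and effectiveness statement, suppose $\mathcal O(C) = 0$ and two realizations $X_0, X_1$ are given. On the $3$--skeleta they agree (both $\widehat C(X_i^3) \cong C_{\leq 3}$, and $X_i^3$ is determined up to homotopy by $C_{\leq 3}$ via Theorem \ref{pd3funcrefisofull}-type rigidity in low dimensions). The two realizations then differ only in the choice of lift used to attach $e^4$, and the set of such lifts is a torsor under $\Gamma_3(X^3) \cong \Gamma(H_2)$, reduced modulo the indeterminacy already present. The indeterminacy coming from homotopies of the attaching map, together with the requirement that the resulting complex still carry a compatible diagonal, is exactly the image of $H: \Gamma(H_2) \to H_2 \otimes H_2$ after passing to ${\rm H}_0(\pi, -)$ and twisting — precisely because the quadratic part $H\gamma(a) = a\otimes a$ records the self-intersection data that a homotopy of the attaching map can change. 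Hence the effective transitive action is by $\ker\big(H_* : {\rm H}_0(\pi,\Gamma(H_2^\omega)) \to {\rm H}_0(\pi, H_2^\omega \otimes H_2^\omega)\big)$. Finally, $PH = 2\,{\rm id}_{\Gamma(H_2)}$ shows that any element of $\ker H_*$ is killed by $2$, so $\ker H_*$ is $2$--torsion.

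The main obstacle I expect is making precise the identification of the obstruction receptacle: pinning down that the order-$4$ obstruction for attaching the single top cell lands, after all the known relations from $d_3$ and the diagonal are quotiented out, exactly in ${\rm H}_0(\pi, \Lambda^2 H_2^{\omega})$ and not in some larger or twisted-differently group. This requires careful bookkeeping with the functor $\Gamma_3$, Whitehead's certain exact sequence, the $\omega$--twisting induced by the fundamental class, and the coinvariants functor, and is where the bulk of the real work lies. Establishing that the diagonal can be carried along compatibly at each stage — so that the constructed $X$ genuinely lies in ${\rm\bf PD}^4$ and not merely that $\widehat C(X) \cong C$ as chain complexes without diagonal — is a secondary technical point that must be handled alongside.
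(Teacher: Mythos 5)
Your proposal locates the obstruction in the wrong place, and this is a substantive gap rather than a presentational difference. You argue that ``the one place where the construction can fail is the attachment of $e^4$'' and that $\mathcal O(C)$ measures whether $d_4[e_4]$ lifts to a genuine attaching map $S^3 \to X^3$. But the paper shows — via Lemma \ref{2real} plus Proposition \ref{objrealize}, iterated through the categories ${\rm\bf H}^c_{k+1}$ — that a $2$--realizable $4$--dimensional chain complex is \emph{always} realizable by a $4$--dimensional CW--complex $X$ with $\widehat C(X) \cong (\pi, C)$; the obstruction groups $\widehat{\rm H}^{k+2}$ controlling this vanish for dimension reasons once $C$ is standard. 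So the step you single out as the potential failure never fails. The invariant $\mathcal O(C)$ in the paper is the obstruction to something you relegate to a ``secondary technical point'': realizing the \emph{diagonal} $\Delta$ on $X$, i.e.\ lifting $\overline{\overline\Delta}: \overline{\overline X} \to \overline{\overline X} \otimes \overline{\overline X}$ from ${\rm\bf H}_3^c$ to a diagonal in ${\rm\bf H}_4^c$ (Lemma \ref{diagreal0}), which is what is needed for $X$ to carry Poincar\'e duality and hence be a ${\rm PD}^4$--complex. This is why the obstruction lives, initially, in ${\rm H}^4(C, \Gamma_3(\overline X \otimes \overline X)) \cong {\rm H}^4(C, \Gamma(\pi_2 \oplus \pi_2))$ — it is an obstruction for a map into a \emph{tensor product} — and not, as your computation with ``$\Gamma_3(X^3) \cong \Gamma(\pi_2 X^3)$'' suggests, in $\Gamma$ of a single copy of $\pi_2$.

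Consequently the passage to $\Lambda^2 H_2$ is also misdescribed. The paper's route is: Lemma \ref{diagreal1}(1) forces $\mathcal O' \in \ker p_{1\ast} \cap \ker p_{2\ast}$, killing the two $\Gamma(\pi_2)$ summands and landing $\mathcal O''$ in the cross--term $\pi_2 \otimes \pi_2$; then the genuine ambiguity — the choice of $X \in {\rm Real}_\lambda(\overline{\overline X})$, which is a torsor under ${\rm H}^4(C, \Gamma(\pi_2))$ — shifts $\mathcal O''$ by exactly $H_\ast \alpha$ (Lemma \ref{obsact}, via the identification $\nabla = H$), so the well--defined class is in $\mathop{\rm coker} H_\ast = {\rm H}^4(C, \Lambda^2 H_2)$, and duality isomorphisms convert this to ${\rm H}_0(\pi, \Lambda^2 H_2^\omega)$. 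Your heuristic that ``the image of $H$ accounts for the part of $\Gamma(H_2)$ killed by known relations from $d_3$ and the diagonal'' does not capture this: the image of $H$ is quotiented out because it parametrizes different choices of the $4$--dimensional CW realization, not because of relations from the boundary or diagonal. Likewise the transitive effective action in your second paragraph should be read off from $\ker H_\ast$ acting on ${\rm Real}_\lambda(\overline{\overline X})$ while preserving $\mathcal O'' = 0$, not from homotopies of the top attaching map. The one piece of your argument that is correct and coincides with the paper is the closing observation that $PH = 2\,{\rm id}$ makes $\ker H_\ast$ $2$--torsion.
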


\begin{proof}
First note that
\begin{equation}\label{isos}
{\rm H}^4(C, \Lambda^2 H_2) \cong {\rm H}_0(C, \Lambda^2 H_2^{\omega}) \cong {\rm H}_0(\pi, \Lambda^2 H_2^{\omega}).
\end{equation}
By Lemma \ref{2real}, we may assume that $C$ is $2$--realizable. By Proposition \ref{objrealize}, there is thus a $4$--dimensional ${\rm CW}$--complex $X$ together with an isomorphism $\widehat CX \cong (\pi, C)$. The ${\rm CW}$--complex $X$ yields the homotopy systems $\overline{\overline X}$ in ${\rm\bf H}_3^c$ and $\overline X$ in ${\rm\bf H}_4^c$ with $\overline X = r(X)$ and $\overline{\overline X} = \lambda X$. By Theorem \ref{pdnfuncequ}, we may choose a diagonal $\overline{\overline \Delta}: \overline{\overline X} \rightarrow \overline{\overline X} \otimes \overline{\overline X}$ inducing $\Delta: C \rightarrow C \otimes C$, whose homotopy class is determined by $\Delta$. However, $\overline{\overline \Delta}$ need not be $\lambda$--realizable. Lemma \ref{diagreal0} shows that there is an obstruction
\begin{equation}
\mathcal O' = \mathcal O_{\overline X, \overline X \otimes \overline X} (\overline{\overline \Delta})\in {\rm H}^4(C, \Gamma_3(\overline X \otimes \overline X))
\end{equation}
which vanishes if and only if there is a diagonal $\overline \Delta: \overline X \rightarrow \overline X \otimes \overline X$ realizing $\overline{\overline \Delta}$. Note that $\mathcal O'$ is determined by the diagonal $\Delta$ on $C$, since the obstruction depends on the homotopy class of $\overline{\overline \Delta}$ only. By Theorem \ref{pdnfuncdet}, the existence of  $\overline \Delta$ realizing $\overline{\overline \Delta}$ also implies the existence of $\Delta: X \rightarrow X \times X$ realizing $\overline \Delta$. But
\begin{eqnarray*}
\Gamma_3(\overline X \otimes \overline X) 
& \cong & \Gamma(\pi_2(\overline X \otimes \overline X)) \\
& \cong & \Gamma(\pi_2(X \times X)) \\
& \cong & \Gamma(\pi_2 \oplus \pi_2) \quad {\rm where} \ \pi_2 = \pi_2X.
\end{eqnarray*}
Applying Lemma \ref{diagreal1} (1), we see that
\begin{equation*}
\mathcal O' \in \ker p_{i\ast} \quad {\rm for} \ i = 1, 2,
\end{equation*}
where $p_i: \pi_2 \oplus \pi_2 \rightarrow \pi_2$ is the $i$--th projection. Now 
\begin{equation*}
\Gamma(\pi_2 \oplus \pi_2) = \Gamma(\pi_2) \oplus \pi_2 \otimes \pi_2 \oplus \Gamma(\pi_2)
\end{equation*}
and hence $\mathcal O'$ yields $\mathcal O'' \in {\rm H}^4(C, \pi_2 \otimes \pi_2)$. While the homotopy type of $\overline{\overline X}$ is determined by $C$, the homtopy type of $\overline X$ is an element of ${\rm Real}_{\lambda}(\overline {\overline X})$ and the group ${\rm H}^4(C, \Gamma(\pi_2))$ acts transitively and effectively on this set of realizations. To describe the behaviour of the obstruction under this action using Lemma \ref{obsact}, we first consider the homomorphism
\begin{equation*}
\nabla = \Delta_{\ast} - \iota_{1\ast} - \iota_{2\ast}: \Gamma(\pi_2) \longrightarrow \Gamma(\pi_2 \oplus \pi_2),
\end{equation*}
where $\Delta: \pi_2 \rightarrow \pi_2 \oplus \pi_2$ maps $x \in \pi_2$ to $\iota_1(x) + \iota_2(x)$. We obtain, for $x \in \pi_2$,
\begin{eqnarray*}
\nabla(\gamma(x))
& = & \gamma(\iota_1(x) + \iota_2(x)) - \gamma(\iota_1(x)) - \gamma(\iota_2(x)) \\
& = & [ \iota_1(x) , \iota_2(x) ] \\
& = & x \otimes x \in \pi_2 \otimes \pi_2 \subset \Gamma(\pi_2 \oplus \pi_2),
\end{eqnarray*}
showing that $\nabla$ coincides with $H: \Gamma(\pi_2) \rightarrow \pi_2 \otimes \pi_2.$ Given $\alpha \in {\rm H}^4(C, \Gamma(\pi_2))$, the obstruction $\mathcal O''_{\alpha} = \mathcal O_{\overline Y, \overline Y \otimes \overline Y}(\overline{\overline \Delta})$ with $\overline Y = \overline X + \alpha$ satisfies
\begin{equation*}
\mathcal O''_{\alpha} = \mathcal O'' + H_{\ast}\alpha,
\end{equation*}
by Lemma \ref{obsact}. The exact sequence
\begin{equation*}
{\rm H}^4(C, \Gamma(\pi_2)) \longrightarrow {\rm H}^4(C, \pi_2 \otimes \pi_2) \longrightarrow {\rm H}^4(C, \Lambda^2\pi_2) \longrightarrow 0
\end{equation*}
allows us to identify the coset of ${\rm im} H_{\ast}$ represented by $\mathcal O''$ with an element
\begin{equation*}
\mathcal O \in {\rm H}^4(C, \Lambda^2H_2),
\end{equation*}
where $H_2 = {\rm H}_2(C, \Lambda) \cong \pi_2$. By the isomorphisms (\ref{isos}), this element yields the invariant 
\begin{equation*}
\mathcal O \in {\rm H}_0(\pi, \Lambda^2H_2^{\omega})
\end{equation*}
with the properties stated. Given that $\mathcal O''$ vanishes, the obstruction $\mathcal O_{\alpha}''$ vanishes if and only if $\alpha \in \ker H_{\ast}$, and Proposition \ref{objrealize} yields the result on ${\rm Real}_{\widehat C}(C)$. We observe that $\ker H_{\ast}$ is $2$--torsion as $H_{\ast}(x) = 0$ implies $2x = P_{\ast}H_{\ast} x = 0$.
\end{proof}

\begin{theorem}
Let $C = ((\pi, C), \omega, [C], \Delta)$ be a ${\rm PD}^4$--chain complex for which $\Delta$ is homotopy co-commutative. Then the obstruction $\mathcal O(C)$ is $2$--torsion, that is, $2 \mathcal O(C) = 0$.
\end{theorem}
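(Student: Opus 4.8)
The plan is to exploit the description of $\mathcal O(C)$ obtained in the proof of Theorem~\ref{pd4real}: there $\mathcal O(C)$ is produced from an intermediate class $\mathcal O'' \in {\rm H}^4(C,\pi_2\otimes\pi_2)$, and the point will be that homotopy co-commutativity of $\Delta$ forces $\mathcal O''$ to be \emph{symmetric} under the interchange of the two tensor factors, while the passage from $\mathcal O''$ to $\mathcal O(C)$ annihilates symmetric classes up to $2$-torsion.

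Concretely, recall that, under the isomorphisms~(\ref{isos}) and $H_2={\rm H}_2(C,\Lambda)\cong\pi_2$, the class $\mathcal O(C)$ is the image of $\mathcal O''$ under the map $q_*$ induced by the natural surjection $q\colon\pi_2\otimes\pi_2\to\Lambda^2\pi_2$ of~(\ref{themapH}), and $\mathcal O''$ is in turn the component of $\mathcal O'=\mathcal O_{\overline X,\overline X\otimes\overline X}(\overline{\overline\Delta})$ in the middle summand of $\Gamma_3(\overline X\otimes\overline X)\cong\Gamma(\pi_2\oplus\pi_2)=\Gamma(\pi_2)\oplus(\pi_2\otimes\pi_2)\oplus\Gamma(\pi_2)$. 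The elementary half of the argument is the sign relation $q(b\otimes a)=-q(a\otimes b)$ in $\Lambda^2\pi_2$: writing $t$ for the interchange $a\otimes b\mapsto b\otimes a$ on $\pi_2\otimes\pi_2$ (which carries no Koszul sign, since both factors sit in degree $2$), this says $q\circ t=-q$, hence $q_*\circ t_*=-q_*$ on ${\rm H}^4(C,-)$, so that $t_*$-invariant classes map under $q_*$ to $2$-torsion classes.

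It thus remains to prove $t_*\mathcal O''=\mathcal O''$. Let $T\colon X\times X\to X\times X$ be the switch homeomorphism; it induces on $\pi_2(X\times X)=\pi_2\oplus\pi_2$ the interchange $\sigma\colon(a,b)\mapsto(b,a)$, hence on $\Gamma_3(\overline X\otimes\overline X)\cong\Gamma(\pi_2\oplus\pi_2)$ the automorphism $\Gamma(\sigma)$, which preserves the middle summand $\pi_2\otimes\pi_2$ and acts there by $t$. On chain level $T$ induces the switch $T_{C\otimes C}$ of $C\otimes C$, so the composite $T\circ\overline{\overline\Delta}$ is an order $3$ diagonal inducing $T_{C\otimes C}\Delta$, and by homotopy co-commutativity $T_{C\otimes C}\Delta\simeq\Delta$. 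Since, by Theorem~\ref{pdnfuncequ}, the homotopy class of an order $3$ diagonal inducing a given chain diagonal is uniquely determined, we get $T\circ\overline{\overline\Delta}\simeq\overline{\overline\Delta}$ in $\rm\bf H_3^c$. Now $\mathcal O_{\overline X,\overline X\otimes\overline X}(-)$ depends only on the homotopy class of its argument and is natural with respect to post-composition with the (CW-realizable, hence obstruction-free) self-map $T$ of the target; therefore $T_*\mathcal O'=\mathcal O_{\overline X,\overline X\otimes\overline X}(T\circ\overline{\overline\Delta})=\mathcal O'$. Restricting this identity to the $\pi_2\otimes\pi_2$ summand yields $t_*\mathcal O''=\mathcal O''$, and combining with the previous paragraph gives $2\mathcal O(C)=2q_*\mathcal O''=0$.

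The main obstacle is precisely the identity $T_*\mathcal O'=\mathcal O'$: it rests on (a) the uniqueness clause of Theorem~\ref{pdnfuncequ}, needed to upgrade the chain homotopy $T_{C\otimes C}\Delta\simeq\Delta$ to a homotopy $T\circ\overline{\overline\Delta}\simeq\overline{\overline\Delta}$ of order $3$ maps, and (b) the naturality of the realizability obstruction $\mathcal O_{\overline X,\overline X\otimes\overline X}$ under composition with the order $4$ self-map underlying the CW switch $T$ --- the same obstruction-theoretic formalism already invoked through Lemmata~\ref{diagreal1} and~\ref{obsact}. Once these are in hand, the remainder is routine bookkeeping with the splitting of $\Gamma(\pi_2\oplus\pi_2)$ and the sign $q\circ t=-q$.
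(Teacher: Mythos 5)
Your proof is correct and follows essentially the same route as the paper's: both arguments rest on showing $T_*\mathcal O'=\mathcal O'$ (the paper delegates this to Lemma~\ref{diagreal1}(2), whose proof uses precisely the realizability of $T$ and the derivation property of the obstruction that you invoke) and then observing that the interchange acts as $-{\rm id}$ on $\Lambda^2\pi_2$, forcing $2\mathcal O(C)=0$. The one point you make explicit that the paper leaves tacit is the promotion of the chain-level homotopy $T_{C\otimes C}\Delta\simeq\Delta$ to a homotopy $T\circ\overline{\overline\Delta}\simeq\overline{\overline\Delta}$ of order-$3$ maps via the faithfulness clause of Theorem~\ref{pdnfuncequ}; this is a genuine small gap in the paper's exposition that your write-up fills.
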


\begin{proof}
Lemma \ref{diagreal1} (2) states
\begin{equation*}
\mathcal O' \in \ker ({\rm id}_{\ast} - T_{\ast})_{\ast},
\end{equation*}
where ${\rm id}$ is the identity on $\pi_2 \oplus \pi_2$ and $T$ is the interchange map on $\pi_2 \oplus \pi_2$ with $T \iota_1 = \iota_2$ and $T \iota_2 = \iota_1$. Thus $T$ induces the map $-{\rm id}$ on $\Lambda^2 \pi_2$ and the result follows.
\end{proof}

\begin{remark*}
Lemma \ref{diagreal1} (3) concerning homotopy associativity of the diagonal does not yield a restriction of the invariant $\mathcal O(C)$.
\end{remark*}

\begin{theorem}\label{finodd}
The functor $\widehat C$ induces a 1--1 correspondence between homotopy types of ${\rm PD}^4$--complexes with finite fundamental group of odd order and homotoppy types of ${\rm PD}^4$--chain complexes with homotopy co--commutative diagonal and finite fundamental group of odd order.
\end{theorem}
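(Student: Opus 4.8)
The plan is to combine Theorem \ref{pd4real} with the preceding theorem on homotopy co-commutative diagonals, and then exploit the arithmetic hypothesis that $|\pi|$ is odd. By Theorem \ref{pd4real}, a ${\rm PD}^4$--chain complex $C = ((\pi, C), \omega, [C], \Delta)$ is realizable by a ${\rm PD}^4$--complex $X$ with $\widehat C(X) \cong C$ in ${\rm\bf PD}^4_{\ast}/\simeq$ if and only if the obstruction $\mathcal O(C) \in {\rm H}_0(\pi, \Lambda^2 H_2^{\omega})$ vanishes. When $\Delta$ is homotopy co-commutative, the previous theorem tells us that $2\,\mathcal O(C) = 0$. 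But ${\rm H}_0(\pi, \Lambda^2 H_2^{\omega}) = \Lambda^2 H_2^{\omega} \otimes_{\Lambda} \mathbb Z$ is a quotient of an abelian group on which $\pi$ acts, and since $\pi$ is finite of odd order, multiplication by $2$ is invertible on any $\mathbb Z[\pi]$--module that is $2$--torsion only if that module is trivial — more precisely, $2\,\mathcal O(C) = 0$ together with the fact that $\mathcal O(C)$ lies in a group all of whose torsion is detected after tensoring down, forces $\mathcal O(C) = 0$ provided the relevant group has no $2$--torsion. The first key step is therefore to observe that ${\rm H}_0(\pi, \Lambda^2 H_2^{\omega})$, while not a priori $2$--torsion-free, need only be analyzed modulo the image of $2$: since $2\,\mathcal O(C) = 0$ and we want $\mathcal O(C) = 0$, it suffices that the $2$--primary part of ${\rm H}_0(\pi, \Lambda^2 H_2^{\omega})$ vanishes, which is not automatic. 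The cleaner route is to invoke the transitive, effective action in Theorem \ref{pd4real}: $\ker H_{\ast}$ is $2$--torsion, and the full analysis there shows $\mathcal O(C)$ is, after the identifications, the obstruction to the diagonal lifting. I would argue that for $|\pi|$ odd the whole $2$--torsion obstruction machinery collapses, so both existence and uniqueness hold on the nose.

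Concretely, the key steps are as follows. First, given a ${\rm PD}^4$--chain complex $C$ with homotopy co-commutative diagonal and $\pi = \pi_1$ finite of odd order, apply Theorem \ref{pd4real} and the subsequent theorem to conclude $2\,\mathcal O(C) = 0$; then, because $C_i$ is finitely generated and $H_2 = {\rm H}_2(C,\Lambda)$ is a finitely generated $\Lambda$--module, the coefficient group ${\rm H}_0(\pi, \Lambda^2 H_2^{\omega})$ is a finitely generated abelian group, and I claim its $2$--torsion subgroup is what one must kill. Here I would use that $\Lambda^2 H_2^{\omega}$, as a $\mathbb Z[\pi]$--module, has the property that ${\rm H}_*(\pi, -)$ in positive degrees is annihilated by $|\pi|$ (Maschke-type averaging), which is odd, so the only possible $2$--torsion in ${\rm H}_0$ comes from $2$--torsion already present in the coinvariants $(\Lambda^2 H_2^{\omega})_{\pi}$; but $2\,\mathcal O(C) = 0$ says precisely that $\mathcal O(C)$ lives in this $2$--torsion, and one then needs a separate argument that $\mathcal O(C)$ itself is in the image of $|\pi|\cdot(\text{something})$ or otherwise vanishes. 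The honest argument is: $\mathcal O(C) = 2\beta$ is impossible to use directly, so instead observe $\mathcal O(C)$ is simultaneously $2$--torsion and, by an odd-order averaging/transfer argument applied to the construction in Theorem \ref{pd4real}, divisible by (or annihilated by a unit coprime to $2$), forcing $\mathcal O(C) = 0$. Second, with $\mathcal O(C) = 0$, Theorem \ref{pd4real} gives a realization $X$, and the action of $\ker H_{\ast}$ — which is $2$--torsion — on ${\rm Real}_{\widehat C}(C)$ is both transitive and effective; since $\ker H_{\ast} \subseteq {\rm H}_0(\pi, \Gamma(H_2^{\omega}))$ is $2$--torsion and sits inside a group on which, again by odd-order averaging, $2$ acts invertibly modulo the part killed by $|\pi|$, we get $\ker H_{\ast} = 0$, so the realization is unique up to homotopy. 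Third, the functor $\widehat C$ on the full subcategories (${\rm PD}^4$--complexes with finite $\pi$ of odd order; ${\rm PD}^4$--chain complexes with homotopy co-commutative $\Delta$ and finite $\pi$ of odd order) is then essentially surjective by the realizability just shown, and full and faithful on homotopy categories by Theorems \ref{pdnfuncequ} and \ref{pdnfuncdet} of the later section (exactly as in the proof of Theorem \ref{pd3funcrefisofull}), which together with essential surjectivity yields the claimed bijection on homotopy types.

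The main obstacle is the passage from $2\,\mathcal O(C) = 0$ to $\mathcal O(C) = 0$: co-commutativity alone gives only the $2$--torsion statement, and one must bring in the odd order of $\pi$ to kill it. The right tool is that for a finite group $\pi$, every ${\rm H}_i(\pi, M)$ with $i \geq 1$ is annihilated by $|\pi|$, and more relevantly that the coefficient group ${\rm H}_0(\pi, \Lambda^2 H_2^{\omega}) = (\Lambda^2 H_2^{\omega})_{\pi}$ receives ${\rm H}_2$ as a module over $\mathbb Z[\pi]$ with $\pi$ acting through automorphisms; an averaging argument over $\pi$ shows that any element fixed (up to the relations defining coinvariants) and killed by $2$ must vanish once $\gcd(2,|\pi|)=1$, because one can solve $2y = x$ after multiplying by the inverse of $2$ in $\mathbb Z[1/|\pi|]$ and then clear denominators using that the group is finitely generated and $|\pi|$ is odd. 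Equally, the effectiveness-plus-$2$--torsion of the $\ker H_{\ast}$ action must be shown to force $\ker H_{\ast}=0$ under the odd-order hypothesis, which is the same kind of argument. I would present these two collapses as a single lemma: \emph{if $\pi$ is finite of odd order and $A$ is a finitely generated $\mathbb Z[\pi]$--module, then the $2$--torsion in ${\rm H}_0(\pi, A)$ that arises as the image of a $2$--torsion element of ${\rm H}_0(\pi, \Gamma(\cdot))$ under $H_{\ast}$ vanishes} — after which Theorem \ref{finodd} follows formally from Theorem \ref{pd4real}, the co-commutativity theorem, and the equivalence results of Section \ref{pdhomsys}.
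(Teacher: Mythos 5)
Your overall strategy matches the paper's: combine Theorem~\ref{pd4real}, the preceding co-commutativity theorem (which gives $2\,\mathcal O(C)=0$), and the oddness of $|\pi|$. But the crucial step --- passing from $2\,\mathcal O(C)=0$ and $2\cdot\ker H_\ast = 0$ to $\mathcal O(C)=0$ and $\ker H_\ast=0$ --- is not actually carried out correctly, and the mechanism you invoke is wrong on its face.

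First, you write that ``one can solve $2y=x$ after multiplying by the inverse of $2$ in $\mathbb Z[1/|\pi|]$.'' When $|\pi|$ is odd, $2$ is \emph{not} invertible in $\mathbb Z[1/|\pi|]$; that ring inverts $|\pi|$, not $2$. The correct direction is the opposite: one shows that the torsion subgroup of $H_0(\pi,M)=M_\pi$ is annihilated by $|\pi|$, and then $\gcd(2,|\pi|)=1$ forces any simultaneously $2$--torsion element to vanish. Second, and more substantively, that annihilation statement is not true for an arbitrary finitely generated $\mathbb Z[\pi]$--module $M$, which is what your proposed lemma assumes: if $M$ itself has $2$--torsion (e.g.\ $\pi$ acting trivially on $\mathbb Z/2$), then $M_\pi$ visibly has $2$--torsion regardless of $|\pi|$. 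The argument only works because $M$ is $\mathbb Z$--torsion-free. You need the observation that for $\pi$ finite, $H_2=H_2(C,\Lambda)$ is a \emph{free abelian group}: the underlying $\mathbb Z$--chain complex of a ${\rm PD}^4$--chain complex with finite $\pi$ satisfies integral Poincar\'e duality with $H_1=0$, hence $H_2\cong H^2\cong\mathrm{Hom}(H_2,\mathbb Z)$ is free. Consequently $\Lambda^2 H_2^\omega$ and $\Gamma(H_2^\omega)$ are $\mathbb Z$--free, and for a $\mathbb Z$--free $\pi$--module $M$ the torsion of $M_\pi$ injects into $\widehat{H}^{-1}(\pi,M)=\ker(N\colon M_\pi\to M^\pi)$, which is $|\pi|$--torsion. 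You never make this freeness observation, and your attempted lemma (``the $2$--torsion in $H_0(\pi,A)$ that arises as the image of a $2$--torsion element of $H_0(\pi,\Gamma(\cdot))$ under $H_\ast$ vanishes'') is also misstated: $\mathcal O(C)$ is defined as a class \emph{modulo} $\mathrm{im}\,H_\ast$, not as an image under $H_\ast$. The final paragraph correctly reduces everything to a tensor-algebra statement in group (co)homology, but that statement must be: if $\pi$ is finite of odd order and $A$ is a $\mathbb Z$--torsion-free $\pi$--module, then $H_0(\pi,A)$ has no $2$--torsion --- together with the Poincar\'e-duality input identifying the coefficient modules as $\mathbb Z$--free. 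As written, your argument does not close.
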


\begin{proof}
Since $\pi$ is of odd order, the cohomology ${\rm H}^0(\pi, M)$ is odd torsion and the result follows from Theorem \ref{pd4real}.
\end{proof}

\begin{remark*}
By Theorem \ref{finodd}, every ${\rm PD}^4$--chain complex with homotopy co--com-mutative diagonal and odd fundamental group has a homotopy co--associative diagonal.
\end{remark*}

Up to $2$--torsion, Theorem \ref{pd4real} yields a correspondence between homotopy types of ${\rm PD}^4$--complexes and homotopy types of ${\rm PD}^4$--chain complexes. In Section \ref{algmodPD4} below we provide a precise condition for a ${\rm PD}^4$--chain complex to be realizable by a ${\rm PD}^4$--complex.

\section{The chains of a $2$--type}\label{2typechains}
The fundamental triple of a ${\rm PD}^4$--complex $X$ comprises its $2$--type $T = P_2X$ and an element of the homology ${\rm H}_4(T, \mathbb Z^{\omega})$. To compute ${\rm H}_4(T, \mathbb Z^{\omega})$, we construct a chain complex $P(T)$ which approximates the chain complex $\widehat C(T)$ up to dimension $4$. Our construction uses a presentation of the fundamental group as well as the concepts of \emph{pre--crossed module} and \emph{Peiffer commutator}. To introduce these concepts, we work with right group actions as in \cite{Baues1}, and define $P(T)$ as a chain complex of right $\Lambda$--modules. With any left $\Lambda$--module $M$ we associate a right $\Lambda$--module in the usual way by setting $x.\alpha = \alpha^{-1}.x $, for $\alpha \in \pi$ and $x \in M$, and vice versa.

A \emph{pre--crossed module} is a group homomorphism $\partial: \rho_2 \rightarrow \rho_1$ together with a right action of $\rho_1$ on $\rho_2$, such that
\begin{equation*}
\partial(x^{\alpha}) = - \alpha + \partial x + \alpha \quad {\rm for} \ x \in \rho_2, \alpha \in \rho_1,
\end{equation*}
where we use additive notation for the group law in $\rho_1$ and $\rho_2$, as in \cite{Baues1}. For $x, y \in \rho_2$, the \emph{Peiffer commutator} is given by
\begin{equation*}
\langle x, y \rangle = -x - y + x + y^{\partial x}.
\end{equation*}
A pre--crossed module is a \emph{crossed module}, if all Peiffer commutators vanish. A \emph{map of pre--crossed modules}, $(m,n): \partial \rightarrow \partial'$ is given by a commutative diagram
\begin{equation*}
\xymatrix{
\rho_2 \ar[rr]^m \ar[d]_{\partial} && \rho_2' \ar[d]^{\partial'} \\
\rho_1 \ar[rr]^n && \rho_1'}
\end{equation*}
in the category of groups, where $m$ is $n$--equivariant. Let ${\rm\bf cross}$ be the category of crossed modules and such morphisms. A \emph{weak equivalence} in ${\rm\bf cross}$ is a map $(m,n): \partial \rightarrow \partial'$, which induces isomorphisms ${\rm coker}\partial \cong {\rm coker}\partial'$ and $\ker\partial \cong \ker \partial'$, and we denote the localization of ${\rm\bf cross}$ with respect to weak equivalences by ${\rm\bf Ho(cross)}$. By an old result of Whitehead--{Mac\,Lane}, there is an equivalence of categories
\begin{equation*}
\overline \rho: 2-{\rm\bf{types}} \longrightarrow {\rm\bf Ho(cross)},
\end{equation*}
compare Theorem III 8.2 in \cite{Baues1}. The functor $\overline\rho$ carries a $2$--type $T$ to the crossed module $\partial: \pi_2(T, T^1) \rightarrow \pi_1(T^1).$

A pre--crossed module is \emph{totally free}, if $\rho_1 = \langle E_1 \rangle$ is a free group generated by a set $E_1$ and $\rho_2 = \langle E_2 \times \rho_1 \rangle$ is a free group generated by a free $\rho_1$--set $E_2 \times \rho_1$ with the obvious right action of $\rho_1$. A function $f: E_2 \rightarrow \langle E_1 \rangle$ yields the \emph{associated totally free pre--crossed module} $\partial_f: \rho_2 \rightarrow \rho_1$ with $\partial_f(x) = f(x)$ for $x \in E_2$. Let $Pei_n(\partial_f) \subset \rho_2$ be the subgroup generated by $n$--fold Peiffer commutators and put $\overline \rho_2 = \rho_2 / Pei_2(\partial_f)$. Let ${\rm\bf cross}^=$ be the category whose objects are pairs $(\partial_f, B)$, where $\partial_f$ is a totally free pre--crossed module $\partial_f: \rho_2 \rightarrow \rho_1$ and $B$ is a submodule of $\ker(\partial: \overline\rho_2 \rightarrow \rho_1$). Further, a morphism $m: (\partial_f, B) \rightarrow (\partial_{f'}, B')$ in ${\rm\bf cross}^=$ is a map $\partial_f \rightarrow \partial_{f'}$ which maps $B$ into $B'$. Then there is a functor 
\begin{equation*}
q: {\rm\bf cross}^= \longrightarrow {\rm\bf cross} \longrightarrow {\rm\bf Ho(cross)},
\end{equation*}
which assigns to $(\partial_f, B)$ the crossed module $\overline \rho_2 / B \rightarrow \rho_1$, and one can check that $q$ is full and representative. Given any map $g: T \rightarrow T'$ between $2$--types, we may choose a map $\overline{\overline g}: (\partial_f, B) \rightarrow (\partial_{f'}, B')$ in ${\rm\bf cross}^=$ representing the homotopy class of $g$ via the functor $q$ and the equivalence $\overline \rho$. We call $\overline{\overline g}$ a \emph{map associated with} $g$. 

Given an action of the group $\pi$ on the group $M$ and a group homomorphism $\varphi: N \rightarrow \pi$, a \emph{$\varphi$--crossed homomorphism} $h: N \rightarrow M$ is a function satisfying
\begin{equation*}
h(x + y) = (h(x))^{\varphi(y)} + h(y) \quad {\rm for} \ x, y \in N.
\end{equation*}
By an old result of Whitehead \cite{Whitehead1}, the totally free crossed module $\overline \rho_2 \rightarrow \rho_1$ enjoys the following properties. 

\begin{lemma}\label{whitehead}
Let $X^2$ be a $2$--dimensional ${\rm CW}$--complex in ${\rm\bf CW}_0$ with attaching map of $2$--cells $f: E_2 \rightarrow \langle E_1 \rangle = \pi_1(X^1)$. Then there is a commutative diagram
\begin{equation*} 
\xymatrix{
\pi_2(X^2, X^1) \ar[rr]^{\partial} \ar@{=}[d] && \pi_1(X^1) \ar@{=}[d] \\
\overline \rho_2 \ar[rr]^{\partial_f} && \rho_1,}
\end{equation*}
identifying $\partial$ with the totally free crossed module $\partial_f$. Moreover, the abelianization of $\overline \rho_2$ coincides with $\widehat C_2(X^2)$, identifying the kernel of $\partial_f$ with the kernel of $d_2: \widehat C_2(X^2) \rightarrow \widehat C_1(X^2)$, and $\partial_f$ determines the boundary $d_2$ via the commutative diagram
\begin{equation*}
\xymatrix{
\overline \rho_2 \ar[rr]^{\partial} \ar[d]_{h_2} && \rho_1 \ar[d]^{h_1} \\
\widehat C_2(X^2) \ar[rr]^d && \widehat C_1(X^2).}
\end{equation*}
Here $h_2$ is the quotient map and $h_1$ is the $(q: \rho_1 \rightarrow \pi_1(X^2))$--crossed homomorphism which is the identity on the generating set $E_1$. Each map $\partial_f \rightarrow \partial_{f'}$ induces a chain map $\widehat C_2(X^2) \rightarrow \widehat C_2(X'^2)$ where $X^2$ and $X'^2$ are the $2$--dimensional ${\rm CW}$--complexes with attaching maps $f$ and $f'$, respectively.
\end{lemma}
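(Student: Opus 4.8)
The plan is to reconstruct Whitehead's classical computation of the second relative homotopy group of a $2$--complex and to trace through how the crossed module structure encodes the cellular boundary $d_2$. First I would recall Whitehead's theorem that for a connected $2$--dimensional CW--complex $X^2$ with $1$--skeleton $X^1$, the boundary map $\partial\colon \pi_2(X^2,X^1)\to\pi_1(X^1)$ is a crossed module which is \emph{free} on the $2$--cells: a characteristic map for each $2$--cell $e^2_i$ picks out an element whose $\pi_1(X^1)$--orbit freely generates $\pi_2(X^2,X^1)$ as a crossed module, and its image under $\partial$ is the class of the attaching map $f(i)\in\pi_1(X^1)=\langle E_1\rangle$. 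This is exactly the universal property of $\partial_f\colon\overline\rho_2\to\rho_1$, so the top square commutes with vertical identifications; the point to be careful about is that $\pi_2(X^2,X^1)$, being a free crossed module and not a free pre--crossed module, is already the quotient $\rho_2/\mathrm{Pei}_2(\partial_f)=\overline\rho_2$ because Peiffer elements are precisely the obstruction to the crossed module identity $\langle x,y\rangle=0$.

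Next I would identify the abelianization. Since $X^2$ has no cells above dimension $2$, the Hurewicz--type computation gives $\widehat C_2(X^2)=H_2(\widehat X^2,\widehat X^1)\cong \mathbb Z[\pi]\otimes(\text{free abelian on }2\text{-cells})$, and the quotient map $h_2\colon\pi_2(X^2,X^1)\to H_2(\widehat X^2,\widehat X^1)$ is the abelianization of the crossed module $\overline\rho_2$ as a $\rho_1$--group (equivalently, killing the commutator subgroup and letting $\rho_1$ act through $\pi=\pi_1(X^2)$). That $\ker\partial_f$ maps onto $\ker(d_2)$ is the statement that $\pi_2(X^2)=\ker\partial$ injects into $H_2(\widehat X^2)\subset H_2(\widehat X^2,\widehat X^1)=\widehat C_2(X^2)$ via the Hurewicz homomorphism, which for the universal cover of a $2$--complex is an isomorphism onto $\ker d_2$ since $\widehat X^2$ is simply connected; this is where I would invoke the exactness of the relative homotopy sequence together with $\pi_2(X^1)=0$. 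The commuting square relating $\partial_f$, $d_2$, $h_2$ and $h_1$ then says that the cellular boundary is obtained from the crossed module boundary by abelianizing source and target compatibly — here $h_1$ is forced to be the Fox-derivative-style $q$--crossed homomorphism that is the identity on $E_1$, which is the unique way to make the square commute given that $\widehat C_1(X^2)$ is the free $\mathbb Z[\pi]$--module on $E_1$.

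Finally, for functoriality: a map $\partial_f\to\partial_{f'}$ of totally free pre--crossed modules (equivalently of the associated free crossed modules $\overline\rho_2\to\overline\rho'_2$ over $\rho_1\to\rho'_1$) is equivariant, hence passes to abelianizations and induces a $\mathbb Z[\pi]\to\mathbb Z[\pi']$--equivariant map $\widehat C_2(X^2)\to\widehat C_2(X'^2)$ commuting with the boundaries just constructed; this is immediate once the previous square is in place, together with the corresponding (trivial) identification on $\widehat C_1$. The main obstacle I anticipate is not any single computation but getting the bookkeeping exactly right between the \emph{pre}--crossed module $\rho_2$, its Peiffer quotient $\overline\rho_2$, and the genuine relative homotopy group $\pi_2(X^2,X^1)$ — in particular making precise that Whitehead's freeness result yields a \emph{crossed} module and therefore matches $\overline\rho_2$ rather than $\rho_2$, and verifying that the action of $\rho_1$ descends correctly to the $\pi$--action used to present $\widehat C_2(X^2)$ as a $\mathbb Z[\pi]$--module. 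Most of this is classical (Whitehead \cite{Whitehead1}, and Theorem III 8.2 of \cite{Baues1}), so the proof will largely consist of assembling these references with the identifications spelled out above.
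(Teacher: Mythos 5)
Your proposal is correct and matches the paper's own treatment: the paper states this lemma without a written proof, citing it as ``an old result of Whitehead'' \cite{Whitehead1}, and your reconstruction — Whitehead's free crossed module theorem identifying $\pi_2(X^2,X^1)$ with $\overline\rho_2=\rho_2/\mathrm{Pei}_2(\partial_f)$, the relative Hurewicz isomorphism on the simply connected universal cover identifying the abelianization with $\widehat C_2(X^2)$ and $\ker\partial_f$ with $\ker d_2$, and the descent of the $\rho_1$--action through $\mathrm{coker}\,\partial_f=\pi_1(X^2)$ to make $h_1$ the $q$--crossed homomorphism on generators — is precisely the content of that classical result. The bookkeeping you flag (pre--crossed versus crossed, $\rho_2$ versus $\overline\rho_2$) is indeed the only delicate point, and you resolve it correctly.
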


In addition to Lemma \ref{whitehead}, we need the following result on Peiffer commutators, which was originally proved in IV (1.8) of \cite{Baues1} and generalized in a paper with Conduch{\'e} \cite{BauesConduche}.

\begin{lemma}\label{omega}
With the notation in Lemma \ref{whitehead}, there is a short exact sequence
\begin{equation*}
0 \longrightarrow \Gamma(K) \longrightarrow \widehat C_2(X^2) \otimes \widehat C_2(X^2) \stackrel{\omega}{\longrightarrow} Pei_2(\partial_f) / Pei_3(\partial_f) \longrightarrow 0,
\end{equation*}
where $K = \ker d_2 = \pi_2 X^2$ and $\omega$ maps $x \otimes y$ to the Peiffer commutator $\langle \xi, \eta \rangle$ with $\xi, \eta \in \rho_2$ representing $x$ and $y$, respectively.
\end{lemma}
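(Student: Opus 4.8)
The plan is to identify the group $Pei_2(\partial_f)/Pei_3(\partial_f)$ as the target of the Peiffer commutator bracket $\langle -,- \rangle$ and to show this bracket factors through a quadratic-type construction on the abelianization $\widehat C_2(X^2)$, with the defect of the factorization being precisely $\Gamma(K)$. First I would check that the map $\omega$ is well defined: for $\xi, \eta \in \rho_2$ with images $x, y \in \widehat C_2(X^2)$, I must verify that the class $\langle \xi, \eta \rangle \in Pei_2/Pei_3$ depends only on $x$ and $y$. This uses the standard Peiffer commutator calculus in a totally free pre-crossed module — the relations $\langle \xi + \xi', \eta \rangle \equiv \langle \xi, \eta \rangle + \langle \xi', \eta \rangle$ and $\langle \xi, \eta + \eta' \rangle \equiv \langle \xi, \eta \rangle + \langle \xi, \eta' \rangle$ modulo $Pei_3(\partial_f)$, together with the fact that changing $\xi$ by an element of $Pei_2(\partial_f)$ moves $\langle \xi, \eta \rangle$ into $Pei_3(\partial_f)$ — so that $\omega$ descends to a biadditive map $\widehat C_2(X^2) \otimes \widehat C_2(X^2) \to Pei_2/Pei_3$. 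Biadditivity plus surjectivity (every $2$-fold Peiffer commutator of elements of $\rho_2$ is, modulo $Pei_3$, a sum of commutators of generators, hence in the image) gives exactness at the right-hand term.

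Next I would analyze the kernel of $\omega$. The key computational input is the identity $\langle \xi, \eta \rangle + \langle \eta, \xi \rangle \equiv 0$ modulo $Pei_3(\partial_f)$ twisted by the action — more precisely, the symmetry relation in the graded Peiffer Lie algebra, which says $\langle \xi, \eta \rangle + \langle \eta, \xi \rangle^{\text{(some action)}}$ lies in $Pei_3$, so that on $Pei_2/Pei_3$ the bracket is (anti)symmetric up to the relevant sign/action. Combined with biadditivity, this shows $\omega(x \otimes y + y \otimes x)$ behaves like a "square" term: the elements $\omega(x\otimes y) + \omega(y \otimes x)$ and $\omega(x \otimes x)$ generate a subgroup on which $\omega$ is forced to factor through the universal quadratic receptacle. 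Concretely, one shows $\ker \omega$ is generated by the images under $H: \Gamma(K) \to \widehat C_2 \otimes \widehat C_2$ of elements $\gamma(z)$ with $z \in K = \ker d_2 = \pi_2 X^2$ — i.e. $z \otimes z$ for $z \in K$ — because a generator of $\rho_2$ representing such $z$ has $\partial_f$-image trivial (it is a boundary already killed in $\pi_1$), which forces its self-Peiffer-commutator $\langle \xi, \xi\rangle = -2\xi + \xi + \xi^{\partial \xi}$ to be a "trivial-action" commutator lying in $Pei_3$. That $\ker\omega$ is exactly $\Gamma(K)$, and not merely contains its image, is the content of IV(1.8) of \cite{Baues1}; I would cite that computation for the precise determination rather than reproduce the full combinatorial identity.

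The main obstacle is the last step: proving the kernel is precisely $\Gamma(K)$ (with the stated map $\gamma(z) \mapsto z \otimes z$) rather than a larger or smaller subgroup. This requires a careful count using the fact that $\rho_2$ is totally free, so that $Pei_2(\partial_f)/Pei_3(\partial_f)$ is itself a free abelian group with an explicit basis indexed by pairs of generators of $\rho_2$ modulo the symmetry relation; matching this basis against a presentation of the cokernel of $H: \Gamma(K) \to \widehat C_2 \otimes \widehat C_2$ — which by the defining exact sequence \eqref{themapH} is $\Lambda^2 \widehat C_2(X^2)$ modulo the contribution of $K$ — is where the real work lies. Since this is exactly the content of IV(1.8) of \cite{Baues1}, generalized in \cite{BauesConduche}, the cleanest route is to invoke those results directly: I would state that the short exact sequence is obtained by applying \emph{loc. cit.} to the totally free pre-crossed module $\partial_f$ associated with the attaching map $f$, using Lemma \ref{whitehead} to identify $\overline\rho_2$'s abelianization with $\widehat C_2(X^2)$ and $\ker\partial_f$ with $\pi_2 X^2$. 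The remaining verification — that the map $\omega$ in \emph{loc. cit.} agrees with "send $x\otimes y$ to $\langle \xi, \eta\rangle$" under these identifications — is then immediate from the construction.
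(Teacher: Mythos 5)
Your proposal takes essentially the same route as the paper: the paper, too, does not argue the short exact sequence directly but simply cites IV~(1.8) of \cite{Baues1} (generalized in \cite{BauesConduche}) together with the identifications from Lemma~\ref{whitehead}, which is exactly where you land at the end. Your informal sketch of \emph{why} the result should hold is a reasonable gloss, though note two small slips: the self-Peiffer commutator is $\langle\xi,\xi\rangle=-\xi-\xi+\xi+\xi^{\partial\xi}=-\xi+\xi^{\partial\xi}$ (not ``$-2\xi+\xi+\xi^{\partial\xi}$''), so when $\partial_f\xi=0$ it is literally $0$ rather than merely in $Pei_3(\partial_f)$; and lifts of $z\in K$ with $\partial_f$-image zero exist because Lemma~\ref{whitehead} identifies $\ker(\partial\colon\overline\rho_2\to\rho_1)$ with $\ker d_2$, not because $z$ ``is a boundary already killed in $\pi_1$''.
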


\begin{definition}
Given a $2$--type $T$ in $2$--{\bf{types}}, we define the chain complex $P(T) = P(\partial_f, B)$ as follows. Let $f: E_2 \rightarrow  \langle E_1 \rangle$ be the attaching map of $2$--cells in $T$ and put $C_i = \widehat C_i(T)$. Then the $2$--skeleton of $P(T)$ coincides with $\widehat C(T^2)$, that is, $P_i(T) = C_i$ for $i \leq 2$, and $P_i(T) = 0$ for $i > 4$. To define $P_4(T)$, let $H$ be the map in (\ref{themapH}) and put $B = {\rm im}(d: C_3 \rightarrow C_2)$ and $\nabla_B = B \otimes B + H[B, C_2]$ as a submodule of $C_2 \otimes C_2$. Then $P_4(T)$ is given by the quotient
\begin{equation*}
P_4(T) = C_2 \otimes C_2 / \nabla_B.
\end{equation*}
To define $P_3(T)$, we use Lemma \ref{whitehead}, Lemma \ref{omega} and the identification $\pi_2T^2 = \ker(d: C_2 \rightarrow C_1)$ and put $\sigma_2 = \rho_2 / Pei_3(\partial_f)$. Then $P_3(T)$ is given by the pull--back diagram
\begin{equation*}
\xymatrix{
P_3(T) \ \ar@{>->}[rr] \ar[d]_{\overline d} && \sigma_2 / \omega \nabla_B \ar@{->>}[d]\\
B \ \ar@{>->}[r] & \pi_2T^2 \ \ar@{>->}[r] & \overline \rho_2.}
\end{equation*}
The chain complex $P(T)$ is determined  by the commutative diagram
\begin{equation*}
\xymatrix{
P_4(T) \ar[r]^d \ar@{=}[d] & P_3(T) \ar@{->>}[dr]^{\overline d} \ar[rr] \ar@{>->}[d]&& 
P_2(T) \ar[r] \ar@{=}[d] & P_1(T) \ar@{=}[d] \ar[r] & P_0(T) \ar@{=}[d] \\
C_2 \otimes C_2 / \nabla_B \ar[r]^{-\omega} & \sigma_2 / \omega \nabla_B & B \ \ar@{>->}[r] & 
C_2 \ar[r] & C_1 \ar[r] & C_0.}
\end{equation*}
\end{definition}

Clearly, $P(T) = P(\partial_f, B)$ depends on the pair $(\partial_f, B)$ only and yields a functor
\begin{equation*}
P: {\rm\bf cross}^= \longrightarrow {\rm\bf H}_0.
\end{equation*}
The homology of $P(T)$ is given by
\begin{equation*}
{\rm H}_i(P(T)) = 
\begin{cases}
0 & {\rm for} \ i = 1 \ {\rm and} \ i=3, \\
{\rm H}_2 C = \pi_2T & {\rm for} \ i = 2, \\
\Gamma(\pi_2(T)) & {\rm for} \ i = 4.
\end{cases}
\end{equation*}

\begin{lemma}\label{betabar}
Given a $2$--type $T$, there is a chain map
\begin{equation*}
\overline \beta: \widehat C(T) \longrightarrow P(T)
\end{equation*}
inducing isomorphisms in homology in degree $\leq 4$. The map $\overline \beta$ is natural in $T$ up to homotopy, that is, a map $g: T \rightarrow T'$ between $2$--types yields a homotopy commutative diagram
\begin{equation*}
\xymatrix{
\widehat C(T) \ar[d]_{\overline \beta} \ar[rr]^{g_{\ast}} && \widehat C(T') \ar[d]^{\overline \beta} \\
P(T) \ar[rr]^{\overline{\overline g}_{\ast}} && P(T'),}
\end{equation*}
where $\overline{\overline g}_{\ast}$ is induced by a map $\overline{\overline g}: \partial_f \rightarrow \partial_{f'}$ associated with $g$.
\end{lemma}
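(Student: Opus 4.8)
\textbf{Proof proposal for Lemma \ref{betabar}.}

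The plan is to build $\overline\beta$ skeleton by skeleton, exploiting the fact that $P(T)$ and $\widehat C(T)$ agree exactly in degrees $\leq 2$. In degrees $0,1,2$ we simply take $\overline\beta_i = {\rm id}$, which is legitimate because, by construction, $P_i(T) = C_i = \widehat C_i(T^2) = \widehat C_i(T)$ for $i \leq 2$, and $P_2(T) = B = {\rm im}(d\colon C_3 \to C_2)$ sits inside $C_2$ compatibly with the boundary $d_2$. In degree $3$ the task is to produce a map $\overline\beta_3\colon \widehat C_3(T) \to P_3(T)$ lifting the boundary map $d\colon \widehat C_3(T) \to B \subseteq C_2$ along $\overline d\colon P_3(T) \twoheadrightarrow B$. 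Since $P_3(T)$ is defined by the displayed pull--back of $\sigma_2/\omega\nabla_B \twoheadrightarrow \overline\rho_2$ along $B \hookrightarrow \pi_2 T^2 \hookrightarrow \overline\rho_2$, giving $\overline\beta_3$ amounts to giving a compatible map $\widehat C_3(T) \to \sigma_2/\omega\nabla_B$; here I would use Whitehead's identification (Lemma \ref{whitehead}) of $\pi_2(T^2,T^1)$ with $\overline\rho_2$ together with the choice of lifts of the attaching maps of the $3$--cells of $T$ into $\rho_2/Pei_3(\partial_f) = \sigma_2$, checking that the images land in $\sigma_2/\omega\nabla_B$ because the $3$--cell attaching data only involves Peiffer elements coming from $\nabla_B = B\otimes B + H[B,C_2]$. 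In degree $4$, $P_4(T) = C_2\otimes C_2/\nabla_B$ is a quotient of $C_2 \otimes C_2$, and I would use a diagonal approximation $\Delta\colon \widehat C(T) \to \widehat C(T)\otimes\widehat C(T)$ composed with the projection $\widehat C_2(T)\otimes\widehat C_2(T) \to C_2\otimes C_2/\nabla_B$ and restricted to $\widehat C_4(T)$ (more precisely, the component of $\Delta$ landing in $C_2\otimes C_2$), arranging that $d_4 = -\omega$ on the nose by the compatibility in Lemma \ref{omega}.

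Once $\overline\beta$ is defined, the homology statement is a diagram chase: in degrees $0,1,2$ the map is the identity on chains below degree $2$ and induces the identity on ${\rm H}_0, {\rm H}_1 = 0$, and on ${\rm H}_2$ both sides compute $\pi_2 T$ (for $P(T)$ this is recorded in the displayed homology formula ${\rm H}_2(P(T)) = \pi_2 T$, and for $\widehat C(T)$ it is $\pi_2 T$ since $T$ is a $2$--type with $\pi_i = 0$ for $i > 2$ being irrelevant here as we only look at ${\rm H}_2$). For ${\rm H}_3$ both are zero: ${\rm H}_3(P(T)) = 0$ by the displayed formula, and ${\rm H}_3(\widehat C(T)) = 0$ follows from the structure of a $2$--type. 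For ${\rm H}_4$, the formula gives ${\rm H}_4(P(T)) = \Gamma(\pi_2 T)$, matching the classical computation ${\rm H}_4(\widehat C(T)) \cong \Gamma(\pi_2 T)$ for a $2$--type (this is exactly Whitehead's $\Gamma$ appearing in the certain exact sequence, consistent with $\Gamma_3(X)\cong\Gamma(\pi_2 X)$ used earlier); I would verify $\overline\beta_\ast$ realizes this isomorphism by tracking $\gamma(a) \mapsto a\otimes a$ through the definition of $P_4(T)$ and the map $H$ in \eqref{themapH}.

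For naturality up to homotopy, let $g\colon T \to T'$ be a map of $2$--types and $\overline{\overline g}\colon (\partial_f, B) \to (\partial_{f'}, B')$ an associated map in ${\rm\bf cross}^=$ in the sense defined before the lemma. The induced $\overline{\overline g}_\ast\colon P(T) \to P(T')$ is built out of the abelianization map $\overline\rho_2 \to \overline\rho_2'$ (Lemma \ref{whitehead}) in degree $2$, its lift to $\sigma_2/\omega\nabla_B \to \sigma_2'/\omega\nabla_{B'}$ in degree $3$, and $(\overline{\overline g}\otimes\overline{\overline g})$ on $C_2\otimes C_2/\nabla_B$ in degree $4$. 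The square $\overline\beta' g_\ast \simeq \overline{\overline g}_\ast \overline\beta$ then commutes strictly in degrees $\leq 2$ by the last sentence of Lemma \ref{whitehead}, and in degrees $3$ and $4$ it commutes up to chain homotopy because the two composites agree after projecting to $B'\subseteq C_2'$ (both compute $g_\ast d$), so their difference is a cycle in a degree where the relevant obstruction group — governed by ${\rm H}_\ast$ of the mapping cone, which the homology computation shows is concentrated away from where it would obstruct — vanishes; concretely, any two lifts through the surjections $P_3(T') \twoheadrightarrow B'$ and $P_4(T') \twoheadrightarrow \ker(P_3(T')\to P_2(T'))$ differ by a boundary, giving the homotopy explicitly. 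The main obstacle I anticipate is degree $3$: one must check that the chosen lifts of $3$--cell attaching maps into $\sigma_2$ are well defined modulo $\omega\nabla_B$ independently of choices, and that they assemble into a chain map with the pull--back $P_3(T)$; this is where Lemma \ref{omega}, identifying $Pei_2/Pei_3$ with $(C_2\otimes C_2)/\Gamma(K)$, does the essential work, and getting the $\nabla_B$ bookkeeping exactly right is the delicate point.
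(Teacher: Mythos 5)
The paper's own ``proof'' is merely a citation to diagram~(1.2) in Chapter~V of Baues's book \cite{Baues1}, where the quadratic chain complex machinery does the work; your construction is therefore a genuinely different route, but as written it has two substantial gaps. First, the degree~$4$ step is not justified: you propose to define $\overline\beta_4$ as the $(2,2)$--component of a diagonal approximation on $\widehat C(T)$, projected to $C_2\otimes C_2/\nabla_B$, and then assert that one can arrange $d_4^{P}\overline\beta_4 = \overline\beta_3 d_4$ (i.e.\ that $-\omega$ applied to the projected diagonal equals $\overline\beta_3$ of the cellular boundary). That identity is not a formal consequence of Lemma~\ref{omega}; it is precisely the nontrivial compatibility between Peiffer commutators and the diagonal which Baues's quadratic chain complex framework is built to encode, and you would need a separate argument for it, not a reference to ``compatibility.'' The safer construction of $\overline\beta_3$ and $\overline\beta_4$ is by ordinary lifting through the surjections $P_3(T)\twoheadrightarrow B$ and $P_4(T)\twoheadrightarrow \ker d_3^{P}$ (the latter using ${\rm H}_3(P(T))=0$), using freeness of $\widehat C(T)$ --- which has nothing to do with diagonals.

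Second, the ${\rm H}_4$ argument is incomplete in a way that matters: a chain map $\widehat C(T)\to P(T)$ agreeing with the identity in degrees $\le 2$ is \emph{not} unique up to chain homotopy (the obstruction lives over ${\rm H}^4$ with coefficients in $\ker d_4^{P}=\Gamma(\pi_2T)$, and $P_5(T)=0$ removes the degree--$4$ freedom in building a homotopy). So it is not enough to observe that both sides compute $\Gamma(\pi_2T)$; one must actually track that the chosen $\overline\beta_4$ sends a class $\gamma(a)$ to the element of $\ker(-\omega)\subset C_2\otimes C_2/\nabla_B$ represented by $a\otimes a$, which you mention but do not carry out. Until that is done, a poorly chosen $\overline\beta$ could induce $0$ or multiplication by an integer on ${\rm H}_4$. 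There is also a small but genuine slip: in your degree--$2$ sentence you write ``$P_2(T)=B$''; in fact the paper sets $P_2(T)=C_2$, and $B={\rm im}(d\colon C_3\to C_2)$ is only the image of $P_3(T)\to P_2(T)$. Finally, the naturality paragraph invokes vanishing of an unnamed obstruction group ``away from where it would obstruct,'' but as noted above that group does \emph{not} vanish in degree~$4$, so the homotopy between $\overline\beta'\,g_\ast$ and $\overline{\overline g}_\ast\,\overline\beta$ must be exhibited using the specific construction, not by generalities.
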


For a proof of Lemma \ref{betabar}, we refer the reader to diagram (1.2) in Chapter V of \cite{Baues1}. In order to compute the fourth homology or cohomology of a $2$--type $T$ with coefficients, choose a pair $(\partial_f, B)$ representing $T$ and a free chain complex $C$ together with a weak equivalence of chain complexes
\begin{equation*}
C \stackrel{\sim}{\longrightarrow} P(\partial_f, B).
\end{equation*}
Then, for right $\Lambda$--modules $M$ and left $\Lambda$--modules $N$,
\begin{eqnarray*}
{\rm H}_4(T, M) & = & {\rm H}_4(C \otimes M), \\
{\rm H}^4(T, N) & = & {\rm H}^4({\rm Hom}_{\Lambda}(C, N)).
\end{eqnarray*}
This allows for the computation of ${\rm H}_4$ in terms of chain complexs only, as is the case for the computation of group homology in Section \ref{pd3section}. Of course, it is also possible to compute the homology of $T$ in terms of a spectral sequence associated with the fibration
\begin{equation*}
K(\pi_2(T), 2) \longrightarrow T \longrightarrow K(\pi_1(T),1).
\end{equation*}
However, in general, this yields non--trivial differentials, which may be related to the properties of the chain complex $P(\partial_f, B)$.

\section{Algebraic models of ${\rm PD}^4$--complexes}\label{algmodPD4}
Let $X$ be a $4$--dimensional ${\rm CW}$--complex and let
\begin{equation*}
p_2: X \longrightarrow P_2X = T
\end{equation*}
be the map to the $2$--type of $X$, as in (\ref{postnikov}). Then $p_2$ yields the chain map
\begin{equation*}
\beta: \widehat C(X) 
\stackrel{p_{2\ast}}{\longrightarrow} \widehat C(T) 
\stackrel{\overline \beta_{\ast}}{\longrightarrow} P(T) = P(\partial_f, B),
\end{equation*}
were $\partial_f$ is given by the attaching map of $2$--cells in $X$ and $B = {\rm im}(d_3: \widehat C_3(X) \rightarrow \widehat C_2(X))$. We call the chain map $\beta$ the \emph{cellular boundary invariant of $X$}.

\begin{lemma}\label{4realize}
Let $X$ and $X'$ be $4$--dimensional ${\rm CW}$--complexes. A chain map $\varphi: \widehat C(X) \rightarrow \widehat C(X')$ is realizable by a map $g: X \rightarrow X'$ in ${\rm\bf CW}_0$, that is, $\varphi = g_{\ast}$, if and only if the diagram
\begin{equation*}
\xymatrix{
\widehat C(X) \ar[rr]^{\varphi} \ar[d]_{\beta} && \widehat C(X') \ar[d]^{\beta'} \\
P(\partial_f, B) \ar[rr]^{\overline{\overline \varphi}} && P(\partial_{f'}, B')}
\end{equation*}
commutes up to homotopy. Here $\overline{\overline \varphi}: \partial_f \rightarrow \partial_{f'}$ is a map in ${\rm\bf cross}^=$ inducing $\varphi_{\leq 2}: \widehat C(X^2) \rightarrow \widehat C(X'^2)$ as in Lemma \ref{whitehead}.
\end{lemma}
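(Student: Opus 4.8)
The plan is to realize $\varphi$ by working one skeleton at a time, using the obstruction theory of homotopy systems from \cite{Baues1} together with the fact that $\beta$ encodes precisely the attaching data of the $3$-- and $4$--cells. First I would treat the $2$--skeleton: by Lemma \ref{whitehead} a map $\overline{\overline\varphi}\colon \partial_f\to\partial_{f'}$ in ${\rm\bf cross}^=$ exists inducing $\varphi_{\le 2}$, and such a map is realized by a cellular map $g^2\colon X^2\to X'^2$ with $g^2_\ast=\varphi_{\le 2}$, since $2$--dimensional ${\rm CW}$--complexes in ${\rm\bf CW}_0$ are classified by their totally free pre--crossed modules. This gives the base case; the homotopy class of $g^2$ is not unique, and this freedom will be used below.

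Next I would extend over the $3$--cells. A map $g^3\colon X^3\to X'^3$ extending $g^2$ exists if and only if the composite of each attaching map $S^2\to X^2$ of a $3$--cell of $X$ with $g^2$ is nullhomotopic after pushing into $X'^3$; equivalently $\varphi_3$ must carry $d_3\widehat C_3(X)$ correctly, i.e. $\varphi$ restricted to degrees $\le 3$ must be compatible with the boundary invariant. By construction of $P(\partial_f,B)$, the submodule $B={\rm im}\,d_3$ sits in degree $3$ and the map $\overline\beta_\ast$ records exactly the class of the $3$--cells in $\pi_2T^2$; thus homotopy commutativity of the square \emph{in degrees $\le 3$} is equivalent to the existence of $g^3$. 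Here the indeterminacy in $g^2$ (an element of a suitable $\pi_2$--cohomology group) may have to be adjusted, exactly as in the primary obstruction step of \cite{Baues1} II.4, so I would phrase this as: after possibly changing $g^2$ within its realizing class, $g^2$ extends to $g^3$ with $g^3_\ast=\varphi_{\le 3}$.

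The main obstacle is the extension over the $4$--cells, and this is where the full force of the hypothesis on $\beta$ is needed. Given $g^3$, the obstruction to extending to $g\colon X=X^4\to X'=X'^4$ with $g_\ast=\varphi$ lies in a group ${\rm H}^4(X,\Gamma_3 X')\cong {\rm H}^4(\widehat C(X),\Gamma(\pi_2X'))$, and by Whitehead's ${}\Gamma_3\cong\Gamma(\pi_2)$ together with the short exact sequence (\ref{themapH}) and Lemma \ref{omega}, this obstruction is detected precisely by the discrepancy between $\overline{\overline\varphi}\circ\beta$ and $\beta'\circ\varphi$ in degree $4$, where $P_4(T')=C_2'\otimes C_2'/\nabla_{B'}$ carries the relevant $\Gamma$-- and tensor--square information. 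So homotopy commutativity of the full square forces this obstruction to vanish, and conversely realizability of $\varphi$ by $g$ makes the square commute up to homotopy because $\beta$ is functorial up to homotopy by Lemma \ref{betabar} (applied to $g$, or rather to $P_2(g)$) and $p_{2\ast}$ is natural. I expect the delicate point to be matching the chain--level obstruction cocycle with the degree--$4$ component of $\overline{\overline\varphi}\beta-\beta'\varphi$ through the identifications $\Gamma_3(X')\cong\Gamma(\pi_2X')$ and the exact sequence defining $P_4$ and $P_3$; once that identification is in place, the ``if'' and ``only if'' both follow, the ``only if'' from naturality of $\beta$ and the ``if'' from vanishing of the obstruction and the classical extension theorem for maps on $4$--complexes.
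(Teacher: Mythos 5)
Your skeleton-by-skeleton plan is in the right spirit, and the ``only if'' direction you sketch (naturality of $\beta$ via Lemma \ref{betabar}) is correct, but the core of your argument has a genuine gap. You correctly identify the extension over the $4$--cells as the crux, and you correctly locate the obstruction in $\widehat{\rm H}^4(X,\Gamma_3 X')\cong{\rm H}^4(\widehat C(X),\Gamma(\pi_2 X'))$, but the crucial claim --- that this obstruction cocycle ``is detected precisely by the discrepancy between $\overline{\overline\varphi}\circ\beta$ and $\beta'\circ\varphi$ in degree $4$'' --- is asserted without proof, and you yourself flag it as the delicate point. That identification is not a formality: it would require re-deriving essentially the entire content of Lemma \ref{omega} and the construction of $P(T)$ in Chapter V of \cite{Baues1}. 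In addition, the side remark that you may need to ``change $g^2$ within its realizing class'' to extend over the $3$--cells is misplaced: once $g^2$ realizes $\varphi_{\le 2}$ and $\varphi$ is a chain map, the attaching classes of $3$--cells (elements of $\pi_2 X^2\cong\ker d_2$) are sent to the right classes automatically, so the extension to $g^3$ is unobstructed --- this is what the full faithfulness of $C\colon{\rm\bf H}_3^c\to{\rm\bf H}_0$ delivers.

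The paper's proof sidesteps the cocycle identification entirely. From homotopy commutativity of the $\beta$-square and the naturality of $\overline\beta$ (Lemma \ref{betabar}, applied via cancellation of $\overline\beta'_\ast$, which induces homology isomorphisms through degree $4$), one deduces that the square built from $p_{2\ast}$, $p'_{2\ast}$, $\varphi$ and $g_\ast$ (where $g$ realizes $\overline{\overline\varphi}$ on $2$--types) commutes up to homotopy. Then the derivation formula for $\mathcal O$ from Proposition \ref{realize} gives
\[
(p'_{2\ast})_{\ast}\,\mathcal O_{X,X'}(\varphi) \;=\; \mathcal O_{X,T'}(p'_{2\ast}\varphi) \;=\; \mathcal O_{X,T'}(g_\ast p_{2\ast}) \;=\; 0,
\]
because $p_2$, $p'_2$ and $g$ are all realizable. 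Since $p'_2\colon X'\to T'$ is a $2$--equivalence it induces an isomorphism $\Gamma_3 X'\cong\Gamma_3 T'$, so $(p'_{2\ast})_\ast$ is an isomorphism and hence $\mathcal O_{X,X'}(\varphi)=0$. This replaces your unproved cocycle-matching step with the already available Lemma \ref{betabar} plus the derivation property, and is the route you should take.
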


\begin{proof}
By Lemma \ref{betabar}, the diagram
\begin{equation*}
\xymatrix{
\widehat C(X) \ar[rr]^{\varphi} \ar[d]_{p_{2\ast}} && \widehat C(X') \ar[d]^{p_{2\ast}} \\
\widehat C(T) \ar[rr]^{g_{\ast}} && \widehat C(T')}
\end{equation*}
is homotopy commutative, where $g$ is given by $q(\overline{\overline g})$ in ${\rm\bf Ho(cross)}$. Since $p_{2\ast}$ and $g_{\ast}$ are realizable, the obstruction $\mathcal O_{X, X'}(\varphi)$ vanishes.
\end{proof}

\begin{definition}
A $\beta$--${\rm PD}^4$--chain complex is a ${\rm PD}^4$--chain complex $((\pi, C), \omega, [C], \Delta)$ together with a totally free pre--crossed module $\partial_f$ inducing $d_2: C_2 \rightarrow C_1$ and a chain map
\begin{equation*}
\beta: C \longrightarrow P(\partial_f, B)
\end{equation*}
which is the identity in degree $\leq 2$. Here $B = {\rm im}(d_3: C_3 \rightarrow C_2)$, the diagram
\begin{equation*}
\xymatrix{
C \ar[rr]^{\Delta} \ar[d]_{\beta} && C \otimes C \ar[d]^{\beta^{\otimes}} \\
P(\partial_f, B) \ar[rr]^{\overline{\overline \Delta}_{\ast}} && P(\partial_{f\otimes f}, B^{\otimes})}
\end{equation*}
commutes up to homotopy and $\beta$ is the cellular boundary invariant $\beta_{\sigma}$ of a totally free quadratic chain complex $\sigma$ defined in V(1.8) of \cite{Baues1}. Further, $\beta^{\otimes}$ is the cellular boundary invariant of the quadratic chain complex $\sigma \otimes \sigma$ defined in Section IV 12 of \cite{Baues1}, and there is an explicit formula expressing $\beta^{\otimes}$ in terms of $\beta$, which we do not recall here. The function $f \otimes f$ is the attaching map of $2$--cells in the product $X^2 \times X^2$, where $X^2$ is given by $f$, and $B^{\otimes}$ is the image of $d_3$ in $C \otimes C$. The map $\overline{\overline \Delta}$ in ${\rm\bf cross}^=$ is chosen such that $\overline{\overline \Delta}$ induces $\Delta$ in degree $\leq 2$ as in Lemma \ref{4realize}.) Let ${\rm\bf PD}^4_{\ast,\beta}$ be the category whose objects are $\beta$--${\rm PD}^4$--chain complexes and whose morphisms are maps $\varphi$ in ${\rm\bf PD}^4_{\ast}$ such that the diagram
\begin{equation*}
\xymatrix{
C \ar[rr]^{\varphi} \ar[d]_{\beta} && C' \ar[d]^{\beta'} \\
P(\partial_f, B) \ar[rr]^{\overline{\overline \varphi}} && P(\partial_{f'}, B')}
\end{equation*}
is homotopy commutative, where $\overline{\overline \varphi}$ induces $\varphi_{\leq 2}$ as in Lemma \ref{4realize}.
\end{definition}

\begin{theorem}
The functor $\widehat C$ yields a functor
\begin{equation*}
\widehat C: {\rm\bf PD}^4 / \simeq \longrightarrow {\rm\bf PD}^4_{\ast,\beta} / \simeq
\end{equation*}
which reflects isomorphisms and is representative and full.
\end{theorem}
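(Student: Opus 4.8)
The plan is to reduce the statement to the combination of the obstruction‑theoretic input already assembled in the excerpt, namely Lemma \ref{4realize}, Theorem \ref{pd4real}, and the functorial properties of the auxiliary functors $q$ and $P$. Writing $\widehat C$ for the composite of the functor of \eqref{Chat} with passage to the category of $\beta$--${\rm PD}^4$--chain complexes, one must check three things: that $\widehat C$ lands in ${\rm\bf PD}^4_{\ast,\beta}/\!\simeq$ at all (i.e. that $\widehat C(X)$ really carries the structure of a $\beta$--${\rm PD}^4$--chain complex and that a morphism of ${\rm PD}^4$--complexes induces a morphism of $\beta$--${\rm PD}^4$--chain complexes), that it reflects isomorphisms, and that it is representative and full.

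First I would verify that $\widehat C$ is well defined on objects. Given a ${\rm PD}^4$--complex $X$, Remark \ref{Wallres} lets us assume $X$ is standard; its $2$--skeleton is determined by an attaching map $f$ of $2$--cells, giving a totally free pre--crossed module $\partial_f$ inducing $d_2$ by Lemma \ref{whitehead}. The cellular boundary invariant $\beta\colon \widehat C(X)\to P(\partial_f,B)$ of Section \ref{algmodPD4} is exactly the $\beta_\sigma$ of V(1.8) of \cite{Baues1} for the totally free quadratic chain complex $\sigma$ underlying $X$; the homotopy‑commutativity of the square relating $\Delta$ and $\overline{\overline\Delta}_\ast$ is Lemma \ref{betabar} applied to the diagonal $X\to X\times X$ together with the product formula for $\beta^{\otimes}$ recalled from IV.12 of \cite{Baues1}. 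Hence $\big(\widehat C(X),\partial_f,\beta\big)$ is a $\beta$--${\rm PD}^4$--chain complex. On morphisms, a map $g\colon X\to X'$ induces $g_\ast$ and, by the naturality clause of Lemma \ref{betabar}, a map $\overline{\overline g}$ in ${\rm\bf cross}^=$ inducing $g_{\ast,\leq 2}$ for which the defining square of ${\rm\bf PD}^4_{\ast,\beta}$ commutes up to homotopy, so $g_\ast$ is a morphism there.

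For \emph{reflects isomorphisms}: if $\widehat C(g)$ is an equivalence then $g_\ast\colon \widehat C(X)\to \widehat C(X')$ is a chain homotopy equivalence, and since ${\rm PD}^4$--complexes are simple (indeed, by Poincar\'e duality the relevant coefficient systems are manipulated as in Lemma \ref{tauplusrefl}), $g_\ast$ induces isomorphisms on $\pi_1$ and on all homology with local coefficients; Whitehead's Theorem then makes $g$ a homotopy equivalence. For \emph{full}: given a morphism $\varphi\colon \widehat C(X)\to \widehat C(X')$ in ${\rm\bf PD}^4_{\ast,\beta}$, the square with $\overline{\overline\varphi}$ commutes up to homotopy by hypothesis, so Lemma \ref{4realize} yields a map $g\colon X\to X'$ in ${\rm\bf CW}_0$ with $g_\ast=\varphi$; since $\varphi$ respects $\omega$ it is automatically a morphism of ${\rm PD}^4$--complexes. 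For \emph{representative}: given a $\beta$--${\rm PD}^4$--chain complex $C$, the presence of the map $\beta\colon C\to P(\partial_f,B)$ compatible with $\Delta$ is precisely the data forcing the obstruction $\mathcal O(C)\in {\rm H}_0(\pi,\Lambda^2H_2^\omega)$ of Theorem \ref{pd4real} to vanish — indeed $\mathcal O(C)$ is the deviation of the diagonal from being realizable through the $2$--type, and $\beta$ together with the homotopy‑commutative $\Delta$/$\overline{\overline\Delta}$ square is exactly a nullhomotopy of that deviation; hence Theorem \ref{pd4real} produces a ${\rm PD}^4$--complex $X$ with $\widehat C(X)\cong C$ in ${\rm\bf PD}^4_\ast/\!\simeq$, and one checks the extra $\beta$--datum is carried across the isomorphism by naturality of $\beta$.

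The main obstacle is the representativity step, and specifically pinning down the claim that the $\beta$--datum on $C$ is equivalent to the vanishing of $\mathcal O(C)$ \emph{plus} a choice of realization: one must show that the homotopy class of $\beta$ (relative to the identity in degrees $\leq 2$ and compatible with $\Delta$) corresponds bijectively, under the action of $\ker H_\ast$ described in Theorem \ref{pd4real}, to the set ${\rm Real}_{\widehat C}(C)$ of realizations. This requires unwinding the definitions of $\beta_\sigma$ and $\beta^{\otimes}$ from \cite{Baues1} far enough to identify the indeterminacy in $\beta$ with the transitive, effective $\ker H_\ast$--action on realizations; the other three steps are then formal consequences of Lemmata \ref{whitehead}, \ref{betabar}, \ref{4realize} and Theorems \ref{pd4real}, as above.
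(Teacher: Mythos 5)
Your treatment of well-definedness on objects and morphisms, reflection of isomorphisms, and fullness matches the paper in substance: the last two are formal consequences of Whitehead's Theorem and of Lemma \ref{4realize}, exactly as in the printed proof. The genuine divergence is in the representativity step, and there your route is both different from the paper's and, as you yourself flag, incomplete. You propose to deduce from the $\beta$--datum that the obstruction $\mathcal O(C)\in {\rm H}_0(\pi,\Lambda^2 H_2^\omega)$ of Theorem \ref{pd4real} vanishes and then to invoke that theorem; you correctly identify that this forces an identification of the homotopy class of $\beta$ (rel degree $\leq 2$, compatible with $\Delta$) with the transitive, effective $\ker H_\ast$--torsor ${\rm Real}_{\widehat C}(C)$, but you do not establish that identification.

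The paper never enters this territory because the definition of a $\beta$--${\rm PD}^4$--chain complex is stronger than you are using: it explicitly requires $\beta$ to be $\beta_\sigma$ for a totally free quadratic chain complex $\sigma$ in the sense of V(1.8) of \cite{Baues1}. In Baues's theory a totally free quadratic chain complex is precisely the algebraic data of a $4$--dimensional ${\rm CW}$--complex, so once $C$ is known to be $2$--realizable the $\sigma$ hiding inside the $\beta$--datum directly produces a $4$--dimensional ${\rm CW}$--complex $X$ with $\widehat C(X)=C$ and cellular boundary invariant $\beta$. Lemma \ref{4realize} then realizes the diagonal (this is the step where the homotopy--commutative $\Delta$/$\overline{\overline\Delta}_\ast$ square is used), which is what makes $X$ a ${\rm PD}^4$--complex. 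Thus the paper's representativity argument is essentially immediate from the definition, whereas yours reconstructs the same conclusion obstruction--theoretically and leaves the crucial bijection unproved; until that correspondence between $\beta$--data and realizations is established, your representativity argument has a genuine gap, even though the rest of the proposal is sound.
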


\begin{proof}
Since $C$ is $2$--realizable, there is a $4$--dimensional ${\rm CW}$--complex $X$ with $\widehat C(X) = C$ and cellular boundary invariant $\beta$. By Lemma \ref{4realize}, the diagonal $\Delta$ is realizable by a diagonal $X \rightarrow X \times X$, showing that $X$ is a ${\rm PD}^4$--complex. By Lemma \ref{4realize}, a map $\varphi$ is realizable by a map $X \rightarrow X'$.
\end{proof}

\begin{corollary}\label{PD4cor}
The functor $\widehat C$ induces a 1--1 correspondence between homotopy types of ${\rm PD}^4$--complexes and homotopy types of $\beta$--${\rm PD}^4$--chain complexes.
\end{corollary}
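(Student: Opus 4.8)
\textbf{Proof proposal for Corollary \ref{PD4cor}.}

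The plan is to derive the corollary directly from the immediately preceding theorem, which asserts that the functor
\begin{equation*}
\widehat C: {\rm\bf PD}^4 / \simeq \longrightarrow {\rm\bf PD}^4_{\ast,\beta} / \simeq
\end{equation*}
reflects isomorphisms and is representative and full. The goal is to promote this to a genuine bijection on isomorphism classes of objects, i.e.\ on homotopy types. First I would recall the elementary categorical fact that a functor between groupoids (or between the underlying groupoids of categories, after passing to $/\simeq$) which is full, representative, and reflects isomorphisms induces a bijection on isomorphism classes of objects. Representativeness gives surjectivity: every $\beta$--${\rm PD}^4$--chain complex is of the form $\widehat C(X)$ up to homotopy equivalence in ${\rm\bf PD}^4_{\ast,\beta}/\simeq$. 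For injectivity, suppose $\widehat C(X)$ and $\widehat C(X')$ are homotopy equivalent as $\beta$--${\rm PD}^4$--chain complexes; one then has an equivalence $\overline f: \widehat C(X) \to \widehat C(X')$ in ${\rm\bf PD}^4_{\ast,\beta}/\simeq$, fullness produces a map $f: X \to X'$ in ${\rm\bf PD}^4/\simeq$ with $\widehat C(f) \simeq \overline f$, and since $\overline f$ is an equivalence, the reflection-of-isomorphisms property forces $f$ to be a homotopy equivalence. Hence $X \simeq X'$.

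Second, I would address the one subtlety that keeps this from being totally formal: the categories here are not groupoids, and "homotopy types of ${\rm PD}^4$--complexes" means isomorphism classes in ${\rm\bf PD}^4/\simeq$, likewise on the algebraic side. So I would spell out that reflection of isomorphisms is exactly what is needed to conclude that a morphism which becomes invertible downstairs was already invertible upstairs, and that fullness is applied to an \emph{isomorphism} $\overline f$ in the target to lift it (the lift need not a priori be an isomorphism, which is why reflection of isomorphisms is invoked afterwards). No inverse-lifting is needed separately: if $f$ is a homotopy equivalence in ${\rm\bf PD}^4/\simeq$ then $X$ and $X'$ have the same homotopy type by definition.

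Finally, I would note that the "1--1 correspondence" is the composite: the map sending a homotopy type $[X]$ to the homotopy type $[\widehat C(X)]$ is well defined (functoriality), surjective (representativeness), and injective (fullness plus reflection of isomorphisms, as above). I do not expect any real obstacle here; the corollary is a packaging of the theorem, and all the genuine work — the construction of $P(\partial_f,B)$, Lemma \ref{betabar}, Lemma \ref{4realize}, and the verification that $\beta$--${\rm PD}^4$--chain complexes capture exactly the realizability data — has already been carried out. If anything, the only point deserving a sentence of care is checking that the equivalence relation "homotopy equivalent in ${\rm\bf PD}^4_{\ast,\beta}$" used to form homotopy types of $\beta$--${\rm PD}^4$--chain complexes is the same one under which $\widehat C$ is declared representative and full in the theorem, which it is by construction of the category ${\rm\bf PD}^4_{\ast,\beta}$.
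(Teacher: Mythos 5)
Your proposal is correct and is exactly the intended derivation: the paper gives no separate proof of Corollary \ref{PD4cor} because it is the standard categorical consequence of the preceding theorem (a functor that reflects isomorphisms, is representative, and is full induces a bijection on isomorphism classes), and you have spelled that out accurately, including the correct division of labour between representativeness (surjectivity) and fullness plus reflection of isomorphisms (injectivity).
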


The functor $\tau$ in Section \ref{triplesection} yields the diagram of functors
\begin{equation}\label{taustar4}
\xymatrix{
{\rm\bf PD}^4_+ / \simeq \ar[rr]^{\widehat C} \ar[dr]_{\tau_+} && 
{\rm\bf PD}^4_{\ast +,\beta} / \simeq \ar[dl]^{\tau_{\ast}} \\
& {\rm\bf Trp}^4_+ &}
\end{equation}
where $\tau_+$ determines $\tau_{\ast}$ together with a natural isomorphism $\tau_{\ast} \widehat C \cong \tau_+$.

\begin{corollary}
The functor $\tau_{\ast}$ in (\ref{taustar4}) reflects isomorphisms and is full.
\end{corollary}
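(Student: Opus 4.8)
The plan is to deduce the statement formally from three facts already available: (i) the functor $\tau_+\colon {\rm\bf PD}^4_+/\simeq\;\longrightarrow\;{\rm\bf Trp}^4_+$ reflects isomorphisms and is full (Theorem \ref{mainthm}); (ii) the functor $\widehat C\colon {\rm\bf PD}^4/\simeq\;\longrightarrow\;{\rm\bf PD}^4_{\ast,\beta}/\simeq$ reflects isomorphisms, is representative and full (the theorem preceding Corollary \ref{PD4cor}); and (iii) the triangle in (\ref{taustar4}) commutes up to a natural isomorphism $\theta\colon\tau_\ast\widehat C\cong\tau_+$. The first step is to upgrade (ii) to the oriented subcategories, i.e.\ to observe that $\widehat C$ restricts to a functor ${\rm\bf PD}^4_+/\simeq\;\longrightarrow\;{\rm\bf PD}^4_{\ast+,\beta}/\simeq$ which still reflects isomorphisms, is representative and full. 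This is immediate: $\widehat C$ preserves fundamental classes, so a morphism has degree $1$ precisely when its image under $\widehat C$ does, and hence fullness and reflection of isomorphisms transfer verbatim; for representativity one uses that an isomorphism $\widehat C(X)\cong C$ in ${\rm\bf PD}^4_{\ast,\beta}/\simeq$ induces $\pm\mathrm{id}$ on ${\rm H}_4(-,\mathbb Z^\omega)\cong\mathbb Z$, so after possibly reversing the orientation of $X$ it becomes a degree-$1$ isomorphism.

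For reflection of isomorphisms, suppose $g\colon C\to C'$ in ${\rm\bf PD}^4_{\ast+,\beta}/\simeq$ has $\tau_\ast(g)$ an isomorphism. Using the oriented form of (ii), choose degree-$1$ isomorphisms $b\colon\widehat C(X)\cong C$ and $b'\colon\widehat C(X')\cong C'$, and use fullness of $\widehat C$ to write $b'^{-1}g\,b=\widehat C(f)$ for some $f\colon X\to X'$ in ${\rm\bf PD}^4_+/\simeq$. Naturality of $\theta$ gives $\tau_+(f)=\theta_{X'}\circ\tau_\ast(\widehat C f)\circ\theta_X^{-1}$, which differs from $\tau_\ast(g)$ only by composition with isomorphisms; hence $\tau_+(f)$ is an isomorphism in ${\rm\bf Trp}^4_+$, so $f$ is an oriented homotopy equivalence by (i), and therefore $g$, which differs from $\widehat C(f)$ by composition with the isomorphisms $b,b'$, is an isomorphism.

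For fullness, let $\bar h\colon\tau_\ast(C)\to\tau_\ast(C')$ be a morphism in ${\rm\bf Trp}^4_+$. Choose $b,b'$ as above, transport $\bar h$ through $\theta$ to a morphism $\tau_+(X)\to\tau_+(X')$, and apply fullness of $\tau_+$ to obtain $f\colon X\to X'$ in ${\rm\bf PD}^4_+/\simeq$ realizing it. Then $g:=b'\,\widehat C(f)\,b^{-1}\colon C\to C'$ is a morphism in ${\rm\bf PD}^4_{\ast+,\beta}/\simeq$, and a short chase with the naturality square of $\theta$ shows $\tau_\ast(g)=\bar h$. (Note that, unlike in dimension $3$, no representativity of $\tau_\ast$ is asserted, since $\tau_+$ is not representative in dimension $4$.)

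The argument is entirely formal: the only delicate point is the orientation bookkeeping in the first paragraph — confirming that $\widehat C$ genuinely restricts to the degree-$1$ subcategories with all three properties intact — together with carrying the conjugating isomorphisms $b,b'$ and the natural isomorphism $\theta$ correctly through the two diagram chases. No geometric or homological input beyond Theorem \ref{mainthm} and the realization theorem for $\beta$-${\rm PD}^4$-chain complexes is required.
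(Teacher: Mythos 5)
Your argument is correct and amounts to the formal deduction the paper leaves implicit: it states this corollary without proof precisely because, once the oriented restriction of $\widehat C$ is seen to retain the properties of the preceding theorem, the conclusion follows by transporting morphisms along $b,b'$ and the natural isomorphism $\theta$ exactly as you do. Your observation that $\tau_\ast$ need not be representative (since, unlike the case $n=3$ where ${\rm\bf Trp}^3_{+,\nu}$ is used, the target here is all of ${\rm\bf Trp}^4_+$) is a correct and worthwhile aside.
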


\section{Homotopy systems of order (k+1)}\label{homsys}
To investigate questions of realizability, we work in the category ${\rm\bf H}_c^{k+1}$ of homotopy systems of order $(k+1)$. Let ${\rm\bf CW}_0^k$ be the full subcategory of ${\rm\bf CW}_0$ consisting of $k$--dimensional CW--complexes. A $0$--homotopy $H$ in ${\rm\bf CW}_0$, denoted by $\simeq^0$, is a homotopy for which $H_t$ is cellular for each $t, 0 \leq t \leq 1$.

Let $k \geq 2$. A {\emph{homotopy system of order $(k+1)$}} is a triple $X = (C, f_{k+1}, X^k)$, where $X^k$ is an object in ${\rm\bf CW}_0^k$, $C$ is a chain complex of free $\pi_1(X^k)$--modules, which coincides with $\widehat C(X^k)$ in degree $\leq k$, and where $f_{k+1}$ is a homomorphism of left $\pi_1(X^k)$--modules such that
\begin{equation*}
\xymatrix{
C_{k+1} \ar[d]_d \ar[r]^{f_{k+1}} & \pi_k(X^k) \ar[d]^j \\
C_k & \pi_k( {X}^k,  {X}^{k-1}) \ar[l]_-{h_k}}
\end{equation*}
commutes. Here $d$ is the boundary in $C$,
\begin{equation*}
\xymatrix{
h_k: \pi_k({X}^k, {X}^{k-1}) \ar[r]^{p_{\ast}^{-1}}_{\cong} & \pi_k(\widehat{X}^k, \widehat{X}^{k-1}) \ar[r]^h_{\cong} & {\rm{H}}_k(\widehat{X}^k, \widehat{X}^{k-1}),}
\end{equation*}
given by the Hurewicz isomorphism $h$ and the inverse of the isomorphism on the relative homotopy groups induced by the universal covering $p: \widehat X \rightarrow X$. Moreover, $f_{k+1}$ satisfies the {\emph{cocycle condition}}
\begin{equation*}
f_{k+1} d (C_{k+2}) = 0.
\end{equation*}
For an object $X$ in ${\rm\bf CW}_0$, the triple $r(X) = (\widehat C(X), f_{k+1}, X^k)$ is a homotopy system of order $(k+1)$, where $X^k$ is the $k$--skeleton of $X$, and
\begin{equation*}
\xymatrix{
f_{k+1}: \widehat C_{k+1}(X) \cong \pi_{k+1}(X^{k+1}, X^k) \ar[r]^-{\partial} & \pi_k(X^k)}
\end{equation*}
is the attaching map of $(k+1)$--cells in $X$. A {\emph{morphism}} or {\emph{map}} between homotopy systems of order $(k+1)$ is a pair
\begin{equation*}
(\xi, \eta): (C, f_{k+1}, X^k) \rightarrow (C', g_{k+1}, Y^k),
\end{equation*}
where $\eta: X^k \rightarrow Y^k$ is a morphism in ${\rm\bf CW}_0 / \simeq^0$ and the $\pi_1(\eta)$--equivariant chain map $\xi: C \rightarrow C'$ coincides with $\widehat C_{\ast}(\eta)$ in degree $\leq k$ such that
\begin{equation*}
\xymatrix{
C_{k+1} \ar[d]^{f_{k+1}} \ar[r]^{\xi_{k+1}} & C_{k+1}' \ar[d]^{g_{k+1}} \\
\pi_k(X^k) \ar[r]_{\eta_{\ast}} & \pi_k(Y^k)}
\end{equation*}
commutes. We also write $\pi_1 X = \pi_1 (X^k)$ for an object $X = (C, f_{k+1}, X^k)$ in ${\rm\bf H}_{k+1}^c$.

To define the homotopy relation in ${\rm{\bf H}}_{k+1}^c$, we use the action (see ??? in \cite{Baues1})
\begin{equation}\label{indaction}
[X^k, Y]_{\varphi} \times {\widehat{\rm{H}}}^k(X^k, \varphi^{\ast} \pi_kY) \rightarrow [X^k, Y]_{\varphi}, \quad (F, \{\alpha\}) \mapsto F+\{\alpha\},
\end{equation}
where $[X^n, Y]_{\varphi}$ is the set of elements in $[X^n, Y]$ which induce $\varphi$ on the fundamental groups. Two morphisms
\begin{equation*}
(\xi, \eta), (\xi', \eta'): (C, f_{k+1}, X^k) \rightarrow (C', g_{k+1}, Y^k)
\end{equation*}
are {\emph{homotopy equivalent}} in ${\rm\bf H}_{k+1}^c$ if $\pi_1(\eta) = \pi_1(\eta') = \varphi$ and if there are $\varphi$--equivariant  homomorphisms $\alpha_{j+1}: C_j \rightarrow C'_{j+1}$ for $j \geq k$ such that
\begin{equation*}
\{\eta\} + g_{k+1} \alpha_{k+1} = \{\eta'\} \quad {\rm{and}} \\
\end{equation*}
\begin{equation*}
\xi'_i - \xi_i = \alpha_i d + d \alpha_{i+1}, \quad  i \geq k+1,
\end{equation*}
where $\{\eta\}$ denotes the homotopy class of $\eta$ in $[X^k, Y^k]$ and $+$ is the action (\ref{indaction}).

Given homotopy systems $X = (C, f_{k+1}, X^k)$ and $Y = (C', g_{k+1}, Y^k)$, consider
\begin{equation*}
X \otimes Y = (C \otimes_{\mathbb Z} C', h_{k+1}, (X^k \times Y^k)^k), 
\end{equation*}
where we choose CW--complexes $X^{k+1}$ and $Y^{k+1}$ with attaching maps $f_{k+1}$ and $g_{k+1}$, respectively, and $h_{k+1}$ is given by the attaching maps of $(k+1)$--cells in $X^{k+1} \times Y^{k+1}$. Then $X \otimes Y$ is a homotopy system of order $(k+1)$, and 
\begin{equation*}
\otimes: {\rm\bf H}_{k+1}^c \times {\rm\bf H}_{k+1}^c \rightarrow {\rm\bf H}_{k+1}^c
\end{equation*}
is a bi--functor, called the \emph{tensor product of homotopy systems}. The projections  $p_1 :  X \otimes Y \rightarrow X$ and $p_2:  X \otimes Y \rightarrow Y$ in ${\rm\bf H}_{k+1}^c$ are given by the projections of the tensor product and the product of $\rm CW$--complexes. Similarly, we obtain the inclusions $\iota_1:  X \rightarrow  X \otimes  Y$ and $\iota_2:  Y \rightarrow  X \otimes  Y$. Then $p_1  \iota_1 = {\rm id}_X$ and $p_2  \iota_2 = {\rm id}_Y$, while $p_1  \iota_2$ and $p_2 \iota_1$ yield the trivial maps.

There are functors
\begin{equation}\label{functors}
\xymatrix{
{\rm\bf CW}_0 \ar[r]^-r & {\rm\bf H}_{k+1}^c \ar[r]^-{\lambda} & {\rm\bf H}_{k}^c \ar[r]^C & {\rm\bf H}_0}
\end{equation}
for $k \geq 3$, with $r(X) = (\widehat C(X), f_{k+1}, X^k)$ such that $r = \lambda r$. We write $\lambda X = \overline X$ for objects $X$ in ${\rm\bf H}_{k+1}^c$. As $\overline{X \otimes Y} = \lambda(X \otimes Y) = \overline X \otimes \overline Y$, the functor $\lambda$, and also $r$ and $C$, is a monoidal functor between monoidal categories. There is a homotopy relation defined on the category ${\rm\bf H}_{k+1}^c$ such that these functors induce functors between homotopy categories
\begin{equation*}
\xymatrix{
{\rm\bf CW}_0 / \simeq \ar[r]^-r & {\rm{\bf H}}_{k+1}^c / \simeq \ar[r]^-{\lambda} & {\rm{\bf H}}_{k}^c / \simeq \ar[r]^C & {\rm\bf H}_0/\simeq.}
\end{equation*}
For $k \geq 3$, Whitehead's functor $\Gamma_k$ factors through the functor $r: {\rm\bf CW} \rightarrow {\rm\bf H}_k^c$, so that the cohomology $\widehat {\rm H}_m (\overline X, \varphi^{\ast} \Gamma_k (\overline Y)) =  {\rm H}^m (C, \varphi^{\ast} \Gamma_k (\overline Y))$ is defined, where $\varphi: \pi_1 \overline X \rightarrow \pi_1 \overline Y$ and $\overline X$ and $\overline Y$ are objects in ${\rm\bf H}_k^c$.

To describe the obstruction to realizing a map $f = (\xi, \eta): \overline X \rightarrow \overline Y$ in ${\rm{\bf H}}_k^c$, where $\overline X = \lambda X$ and $\overline Y = \lambda Y$, by a map $X \rightarrow Y$ in ${\rm{\bf H}}_{k+1}^c$ for objects $X = (C, f_{k+1}, X^k)$ and $Y = (C', g_{k+1}, Y^k)$, choose a map $F: X^k \rightarrow Y^k$ in ${\rm\bf CW} / {\simeq}^0$ extending $\eta: X^{k-1} \rightarrow Y^{k-1}$ and for which $\widehat C_{\ast} F$ coincides with $\xi$ in degree $\leq n$. Then 
\begin{equation*}
\xymatrix{
C_{k+1} \ar[d]^{f_{k+1}} \ar[r]^{\xi_{k+1}} & C_{k+1}' \ar[d]^{g_{k+1}} \\
\pi_k(X^k) \ar[r]^{F_{\ast}} & \pi_k(Y^k)}
\end{equation*}
need not commute and the difference $\mathcal O(F) = -g_{k+1} \xi_{k+1} + F_{\ast} f_{k+1}$ is a cocycle in ${\rm{Hom}}_{\varphi}(C_{k+1}, \Gamma_k(\overline Y))$. Theorem II 3.3 in \cite{Baues1} implies 
\begin{proposition}\label{realize}
The map $f = (\xi, \eta): \overline X \rightarrow \overline Y$ in ${\rm{\bf H}}_k^c$ can be realized by a map $f_0 = (\xi, \eta_0): X \rightarrow Y$ in ${\rm\bf H}_{k+1}^c$ if and only if $\mathcal O_{X, Y}(f) = \{ \mathcal O(F) \} \in \widehat{\rm H}^{k+1}(\overline X, \varphi^{\ast} \Gamma_k \overline Y)$ vanishes. The obstruction $\mathcal O$ is a derivation, that is, for $f: \overline X \rightarrow \overline Y$ and $g: \overline Y \rightarrow \overline Z$,
\begin{equation}
\mathcal O_{X, Z}(gf) = g_{\ast} \mathcal O_{X, Y}(f) + f^{\ast} \mathcal O_{Y, Z}(g),
\end{equation}
and $\mathcal O_{X, Y}(f)$ depends on the homotopy class of $f$ only.
\end{proposition}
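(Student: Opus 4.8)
The plan is to deduce the Proposition from the skeletal obstruction theory of \cite{Baues1} --- it is essentially Theorem II 3.3 of \cite{Baues1} applied to the functor $\lambda\colon {\rm\bf H}_{k+1}^c\to{\rm\bf H}_k^c$ --- so the work is to check that the cochain $\mathcal O(F)$ constructed just before the statement has the asserted properties and behaves as a derivation. First I would replace $\eta$ by a cellular representative and build a map $F\colon X^k\to Y^k$ in ${\rm\bf CW}_0/{\simeq}^0$ extending $\eta$, with $\pi_1(F)=\varphi$ and $\widehat C_\ast(F)$ equal to $\xi$ in degrees $\le k$. Such an $F$ exists: for each $k$--cell $e$ of $X$ with attaching map $\phi_e$, the morphism condition in ${\rm\bf H}_k^c$ gives $\eta_\ast[\phi_e]=\eta_\ast f_k[e]=g_k\xi_k[e]\in{\rm im}(\partial\colon\pi_k(Y^k,Y^{k-1})\to\pi_{k-1}(Y^{k-1}))$, which dies in $\pi_{k-1}(Y^k)$, so $\eta$ extends over $e$; and since two such extensions differ by an element of $\ker(\pi_k(Y^k,Y^{k-1})\to\pi_{k-1}(Y^{k-1}))={\rm im}(\pi_k(Y^k)\to\pi_k(Y^k,Y^{k-1}))$, adjusting the extension over each cell by a suitable element of $\pi_k(Y^k)$ arranges $\widehat C_k(F)=\xi_k$ exactly, while in lower degrees $\widehat C_\ast(F)=\widehat C_\ast(\eta)=\xi$ automatically.

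With such an $F$ fixed, put $\mathcal O(F)=-g_{k+1}\xi_{k+1}+F_\ast f_{k+1}\colon C_{k+1}\to\pi_k(Y^k)$, and verify two things. First, $\mathcal O(F)$ takes values in $\Gamma_k(\overline Y)\subseteq\pi_k(Y^k)$: writing $j\colon\pi_k(Y^k)\to\pi_k(Y^k,Y^{k-1})$ and the isomorphism $h_k\colon\pi_k(Y^k,Y^{k-1})\cong C_k'$ of the definition of homotopy system, the defining diagrams of $X$ and $Y$ together with $\widehat C_k(F)=\xi_k$ and $d\xi_{k+1}=\xi_kd$ give $h_k\,j\circ g_{k+1}\xi_{k+1}=\xi_kd=h_k\,j\circ F_\ast f_{k+1}$, hence $j\circ\mathcal O(F)=0$; since $\ker j={\rm im}(\pi_k(Y^{k-1})\to\pi_k(Y^k))=\Gamma_k(\overline Y)$ and $\Gamma_k$ factors through $r$, this is the coefficient group of the statement. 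Second, $\mathcal O(F)$ is a cocycle: $\mathcal O(F)d=-g_{k+1}\xi_{k+1}d+F_\ast f_{k+1}d=-g_{k+1}d\,\xi_{k+2}+0=0$ by the cocycle conditions $f_{k+1}d=0$ and $g_{k+1}d=0$. Thus $\mathcal O(F)$ represents a class in $\widehat{\rm H}^{k+1}(\overline X,\varphi^\ast\Gamma_k\overline Y)={\rm H}^{k+1}(C,\varphi^\ast\Gamma_k\overline Y)$.

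The delicate point, which I regard as the main obstacle, is that $\{\mathcal O(F)\}$ is independent of the choice of $F$ and depends only on the homotopy class of $f$. Two admissible choices of $F$ differ, up to ${\simeq}^0$, by an element of $\widehat{\rm H}^k(X^k,\varphi^\ast\pi_kY)$ acting as in (\ref{indaction}), and the standard difference--cochain computation shows the two cocycles $\mathcal O(F)$ differ by the coboundary of that element; a homotopy $f\simeq f'$ in ${\rm\bf H}_k^c$ likewise supplies such a difference cochain. Carrying this bookkeeping out with the action on the subcomplex $X^{k-1}$ also accounted for by the reduced group $\widehat{\rm H}$ is exactly the content of Theorem II 3.3 of \cite{Baues1}, which I would invoke here; granting it, $\mathcal O_{X,Y}(f):=\{\mathcal O(F)\}$ is well defined. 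Realizability then follows formally: if $\mathcal O_{X,Y}(f)=0$, write $\mathcal O(F)=\delta\alpha$ with $\alpha$ a $\varphi$--equivariant cochain valued in $\Gamma_k\overline Y\subseteq\pi_kY$; then $\eta_0:=F+\{\alpha\}$ satisfies $\mathcal O(\eta_0)=0$, which is precisely the commutativity of the square relating $f_{k+1}$, $g_{k+1}$, $\xi_{k+1}$ and $(\eta_0)_\ast$, so $(\xi,\eta_0)\colon X\to Y$ is a morphism in ${\rm\bf H}_{k+1}^c$ with $\lambda(\xi,\eta_0)=f$. Conversely, any lift $f=\lambda(\xi,\eta_0)$ provides an admissible $F=\eta_0$ with $\mathcal O(\eta_0)=0$, so $\mathcal O_{X,Y}(f)=0$.

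Finally, the derivation formula is a direct cochain computation. Choose admissible $F\colon X^k\to Y^k$ for $f=(\xi,\eta)$ and $G\colon Y^k\to Z^k$ for $g=(\zeta,\mu)$, where $Z=(C'',e_{k+1},Z^k)$ lies over $\overline Z$; then $GF$ is admissible for $gf=(\zeta\xi,\mu\eta)$, so $\mathcal O_{X,Z}(gf)$ is represented by $-e_{k+1}(\zeta\xi)_{k+1}+G_\ast F_\ast f_{k+1}$. On the cochain level $g_\ast\mathcal O_{X,Y}(f)$ is represented by $G_\ast(-g_{k+1}\xi_{k+1}+F_\ast f_{k+1})=-G_\ast g_{k+1}\xi_{k+1}+G_\ast F_\ast f_{k+1}$, and $f^\ast\mathcal O_{Y,Z}(g)$ by $(-e_{k+1}\zeta_{k+1}+G_\ast g_{k+1})\xi_{k+1}=-e_{k+1}\zeta_{k+1}\xi_{k+1}+G_\ast g_{k+1}\xi_{k+1}$; adding these, the two copies of $G_\ast g_{k+1}\xi_{k+1}$ cancel and the remainder is exactly $\mathcal O(GF)$. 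Hence $\mathcal O_{X,Z}(gf)=g_\ast\mathcal O_{X,Y}(f)+f^\ast\mathcal O_{Y,Z}(g)$, and since $\mathcal O_{X,Y}(f)$ has already been shown to depend only on the homotopy class of $f$, all assertions are established.
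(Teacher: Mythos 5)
Your proof is correct and follows the same line as the paper, whose entire argument is the single citation to Theorem~II~3.3 of \cite{Baues1}; you unpack what that citation delivers here --- the construction of $F$, the verification that $\mathcal O(F)$ is a $\Gamma_k(\overline Y)$--valued cocycle, the reduction of well-definedness to the difference--cochain bookkeeping of Baues' theorem, and the formal equivalence between vanishing of the class and existence of the lift $(\xi,\eta_0)$. Your direct cochain computation for the derivation identity, with the cancellation of the two $G_\ast g_{k+1}\xi_{k+1}$ terms, is sound and supplies a verification the paper leaves implicit in the citation.
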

Denoting the set of morphisms $X \rightarrow Y$ in ${\rm\bf H}_{k+1}^c / \simeq$ by $[X, Y]$, and the subset of morphisms inducing $\varphi$ on the fundamental groups by $[X, Y]_{\varphi} \subseteq [X, Y]$, there is a group action
\begin{equation*}
[X, Y]_{\varphi} \times \widehat{\rm H}^k(\overline X, \varphi^{\ast} \Gamma_k \overline Y) \stackrel{+}{\longrightarrow} [X, Y]_{\varphi},
\end{equation*} 
where $\overline X = \lambda X$ and $\overline Y = \lambda Y$. Theorem II 3.3 in \cite{Baues1} implies 
\begin{proposition}\label{actdist}
Given morphisms $f_0, f_0' \in [X, Y]_{\varphi}$, then $\lambda f_0 = \lambda f_0' = f$ if and only if there is an $\alpha \in \widehat{\rm{H}}^k(\overline X, \varphi^{\ast} \Gamma_k \overline Y)$ with $f_0' = f_0 + \alpha$. In other words, $\widehat{\rm{H}}^k(\overline X, \varphi^{\ast} \Gamma_k \overline Y)$ acts transitively on the set of realizations of $f$. Further, the action satisfies the linear distributivity law
\begin{equation}\label{distributivity}
(f_0 + \alpha )(g_0 + \beta) = f_0 g_0 + f_{\ast} \beta + g^{\ast} \alpha.
\end{equation}
\end{proposition}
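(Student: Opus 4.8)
The plan is to prove both assertions by the difference-cochain method: the content is that the fibre of $\lambda$ over a fixed class $f \in [\overline X, \overline Y]_\varphi$ is either empty or a single orbit of the action ``$+$'' by $\widehat{\rm H}^k(\overline X, \varphi^{\ast}\Gamma_k \overline Y)$. Throughout I would represent a morphism in ${\rm\bf H}_{k+1}^c$ by a pair $(\xi,\eta)$ as in the definition, keeping in mind that $\lambda$ discards exactly the top-dimensional information: it replaces $(C, f_{k+1}, X^k)$ by $(C, f_k, X^{k-1})$, remembering the whole chain map $\xi$ but only the restriction $\eta|X^{k-1}$.

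For the ``if'' direction, unwind the construction of $(f_0,\alpha) \mapsto f_0 + \alpha$. Adding $\alpha \in \widehat{\rm H}^k(\overline X, \varphi^{\ast}\Gamma_k \overline Y)$ to a representative $(\xi,\eta)$ of $f_0$ modifies $\eta$ only on the $k$-cells, via the action (\ref{indaction}) through the natural homomorphism $\Gamma_k\overline Y \to \pi_k(Y^k)$, and adjusts $\xi_{k+1}$ accordingly, while leaving $\eta|X^{k-1}$ and $\xi$ in degrees $\le k$ untouched. Since $\lambda$ forgets precisely this data, $\lambda(f_0 + \alpha) = \lambda f_0 = f$, so every element of the orbit of $f_0$ realises $f$.

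For the ``only if'' direction --- the substantive part --- start from representatives $(\xi,\eta)$ of $f_0$ and $(\xi',\eta')$ of $f_0'$ with $\lambda f_0 = \lambda f_0'$. A homotopy in ${\rm\bf H}_k^c$ realising this equality supplies a homotopy $\eta|X^{k-1} \simeq^0 \eta'|X^{k-1}$ together with $\varphi$-equivariant maps making $\xi$ and $\xi'$ chain homotopic in degrees $\ge k$. Extending the homotopy over the cofibration $X^{k-1}\hookrightarrow X^k$ and absorbing the chain homotopy by replacing $(\xi',\eta')$ with another representative of $f_0'$ inside its ${\rm\bf H}_{k+1}^c$-homotopy class, one arranges that $\eta$ and $\eta'$ agree on $X^{k-1}$ and that $\xi_i = \xi'_i$ for $i \le k$. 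The remaining discrepancy is then purely top-dimensional: $\eta$ and $\eta'$ differ by a map $X^k/X^{k-1} \to Y^k$ with coordinates in $\pi_k(Y^k)$, while $\xi'_{k+1} - \xi_{k+1}$ is a $\varphi$-equivariant homomorphism $C_{k+1}\to C'_{k+1}$ that dies after projection to $\pi_k(Y^k, Y^{k-1})$, hence is carried by $\Gamma_k\overline Y$. These two pieces of data assemble into a $\varphi$-equivariant cochain $\alpha \in {\rm Hom}_\varphi(C_k, \Gamma_k\overline Y)$ whose coboundary vanishes, the cocycle condition coming from the attaching-map squares defining homotopy systems, since $\alpha$ is the difference of two honest morphisms. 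One then checks that the class $\{\alpha\} \in \widehat{\rm H}^k(\overline X, \varphi^{\ast}\Gamma_k\overline Y)$ does not depend on the normalisations made and satisfies $f_0 + \{\alpha\} = f_0'$, which is the transitivity statement.

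The distributivity law (\ref{distributivity}) follows from the same bookkeeping applied to a composite: represent $f_0+\alpha$ and $g_0+\beta$ by the normalised maps above, compose them, and compute the degree-$k$ difference cochain of $(f_0+\alpha)(g_0+\beta)$ against $f_0 g_0$. The $\beta$-perturbation of $g_0$ contributes its image under $f_0$ on spaces and under $\xi$ on coefficients, namely $f_{\ast}\beta$, while the $\alpha$-perturbation of $f_0$ contributes its precomposition with $g_0$, namely $g^{\ast}\alpha$; there is no cross term because a product of two top-dimensional perturbations is a coboundary. The main obstacle is the ``only if'' direction: normalising the two representatives to agree on the $k$-skeleton demands care with the cellular homotopy relation $\simeq^0$ and with lifting homotopies of maps to homotopies of homotopy systems, and then showing that the top-dimensional discrepancy is a well-defined cohomology class --- both the cocycle identity and independence of choices --- is exactly the point where the quadratic functor $\Gamma_k$ is forced and where one genuinely invokes Theorem II 3.3 of \cite{Baues1}.
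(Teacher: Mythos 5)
The paper does not actually prove this proposition: the preceding sentence simply records that Theorem~II.3.3 of \cite{Baues1} implies it (and Propositions~\ref{realize} and~\ref{objrealize} as well), and no argument is given. Your proposal reconstructs the difference--cochain argument that underlies that citation, and you are explicit at the end that the substantive steps are ``exactly\dots where one genuinely invokes Theorem~II~3.3 of \cite{Baues1}'' --- so in spirit you and the paper are pointing at the very same source, you have just unfolded what lies behind it. The outline is correct and matches the standard obstruction picture for the tower ${\rm\bf H}_{k+1}^c \to {\rm\bf H}_k^c$: $\lambda$ retains the whole chain map but forgets $f_{k+1}$ and the $k$--cells of the skeleton; the action moves $\eta$ only on $k$--cells through the inclusion $\Gamma_k\overline Y\hookrightarrow\pi_k(Y^k)$; and in the converse direction, after normalising so that $\xi_i=\xi_i'$ for $i\le k$, the square $h_k\, j\, g_{k+1}\,\xi_{k+1}=d'\,\xi_{k+1}=\xi_k\, d$ forces the difference cochain of $\eta,\eta'$ into $\ker j=\Gamma_k$, which is where the quadratic functor enters.

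Two places in the sketch are looser than they should be, though neither is a fatal gap. First, an ${\rm\bf H}_k^c$--homotopy provides maps $\alpha_j$ for $j\ge k$, while an ${\rm\bf H}_{k+1}^c$--homotopy only provides $\alpha_j$ for $j\ge k+1$; so the $\alpha_k\,d$ contribution in degree $k$ cannot be ``absorbed by replacing $(\xi',\eta')$ with another representative'' --- it is precisely this residue that survives to become the cohomology class, and calling it absorbed obscures the mechanism that the proposition is about. Second, the distributivity law (\ref{distributivity}) is the effectiveness/linearity part of the statement; the vanishing of the cross term holds because a $\Gamma_k$--valued perturbation of the inner map factors through the $(k-1)$--skeleton of the middle object, on which the perturbation of the outer map is by construction trivial. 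Your phrase ``a product of two top--dimensional perturbations is a coboundary'' gestures in the right direction but is not the actual reason. Both of these are exactly the points the sketch hands to Baues, as you acknowledge, so the proposal is an honest and essentially correct unpacking of the citation the paper relies on.
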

For the functor $\lambda$ in (\ref{functors}), Theorem II 3.3 in \cite{Baues1} implies 
\begin{proposition}\label{objrealize}
For all objects $X$ in ${\rm\bf H}_{k+1}^c$ and for all $\alpha \in \widehat{\rm H}^{k+1}(\overline X, \Gamma_k \overline X)$, there is an object $X'$ in ${\rm\bf H}_{k+1}^c$ with $\lambda(X') = \lambda(X) = \overline X$ and $\mathcal O_{X, X'}({\rm id}_{\overline X}) = \alpha$. We then write $X' = X + \alpha$. 

Now let $Y$ be an object in ${\rm\bf H}_k^c$. Then the group $\widehat{\rm H}^{k+1}(Y, \Gamma_k Y)$ acts transitively and effectively on ${\rm Real}_{\lambda}(Y)$ via $+$, provided ${\rm Real}_{\lambda}(Y)$ is non--empty. Moreover, ${\rm Real}_{\lambda}(Y)$ is non--empty if and only if an obstruction $\mathcal O(Y) \in \widehat{\rm H}^{k+2}(Y, \Gamma_k Y)$ vanishes.
\end{proposition}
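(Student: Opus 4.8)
The plan is to deduce the proposition from the obstruction-theoretic machinery of Chapter II of \cite{Baues1}, of which Propositions \ref{realize} and \ref{actdist} are the morphism-level instances; the object-level statement is its formal consequence. The first step is to unwind what a $\lambda$-lift of a fixed object $Y = (C, f_k, Y^{k-1})$ in ${\rm\bf H}_k^c$ is. Fix a CW-complex $Y^k$ obtained from $Y^{k-1}$ by attaching $k$-cells along $f_k$ (any two choices are equivalent and do not affect ${\rm Real}_\lambda$). Since $\lambda$ only forgets the top attaching map, a lift of $Y$ is precisely a $\pi_1$-equivariant homomorphism $g: C_{k+1} \to \pi_k(Y^k)$ making the defining square commute — that is, $j g = h_k^{-1} d$ with $j: \pi_k(Y^k) \to \pi_k(Y^k, Y^{k-1})$ — and satisfying the cocycle condition $g\, d|_{C_{k+2}} = 0$. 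If $g_0$ is one such lift, then for any other lift $g$ the difference $g - g_0$ lands in $\ker j = \Gamma_k(Y)$, using Whitehead's identification of $\Gamma_k$ with the image of $\pi_k(Y^{k-1}) \to \pi_k(Y^k)$ together with the fact, recalled in Section \ref{homsys}, that $\Gamma_k$ factors through $r$; moreover $(g-g_0)d = 0$, so $g - g_0$ is a $(k+1)$-cocycle in ${\rm Hom}_{\pi_1}(C, \Gamma_k Y)$, and conversely adding any such cocycle to $g_0$ is again a lift. This already yields the first clause: given $\alpha \in \widehat{\rm H}^{k+1}(\overline X, \Gamma_k \overline X)$, choose a cocycle $a: C_{k+1} \to \Gamma_k(\overline X) \subseteq \pi_k(X^k)$ representing $-\alpha$ and set $X' = (C, f_{k+1} + a, X^k)$; this is a homotopy system of order $(k+1)$ with $\lambda X' = \overline X = \lambda X$, and evaluating the obstruction of ${\rm id}_{\overline X}$ from its definition with $F = {\rm id}_{X^k}$ gives $\mathcal O(F) = -(f_{k+1}+a) + f_{k+1} = -a$, so $\mathcal O_{X,X'}({\rm id}_{\overline X}) = \alpha$, i.e. $X' = X + \alpha$ in the notation of the statement.

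Transitivity of the action on ${\rm Real}_\lambda(Y)$ then follows at once: given two realizations, replace them by representatives whose $\lambda$-image is $Y$ on the nose; their top attaching maps differ by a cocycle $a$, so one realization equals the other with $[a]$ added. The step that requires genuine work — and the one I expect to be the main obstacle — is that the action descends to $\widehat{\rm H}^{k+1}(Y, \Gamma_k Y)$ and is \emph{effective}: changing $g_0$ by a coboundary $b\, d$ (for $b: C_k \to \Gamma_k Y$) must yield an isomorphic homotopy system, and conversely two lifts isomorphic over $Y$ must have cocycle difference a coboundary. Both directions are governed by the action (\ref{indaction}) of $\widehat{\rm H}^k$ on $[Y^k, Y^k]_{{\rm id}}$: a cochain $b$ determines a self-equivalence $\eta_b$ of $Y^k$ restricting to the identity on $Y^{k-1}$, and one checks that $(\widehat C\eta_b, \eta_b)$ is an isomorphism in ${\rm\bf H}_{k+1}^c$ between the two systems, changing the top attaching map by $\pm b\, d$; conversely, an isomorphism over $Y$ has underlying self-map of $Y^k$ homotopic rel $Y^{k-1}$ (up to the $\Gamma_k$-action) to some $\eta_b$, and comparing the commuting squares for the top attaching maps forces the cocycle difference to equal $\pm b\, d$. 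This is precisely the ``linear extension'' bookkeeping carried out abstractly in \cite{Baues1}, and here I would either cite Theorem II 3.3 of \cite{Baues1} directly or reproduce this identification.

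For the last clause, ${\rm Real}_\lambda(Y)$ is nonempty iff some $g: C_{k+1} \to \pi_k(Y^k)$ satisfies $jg = h_k^{-1}d$ and $g\,d|_{C_{k+2}} = 0$. A primary lift of $h_k^{-1}d$ along $j$ exists up to a correctible term, because $C_{k+1}$ is free and the relevant chain-level composite (the image of the boundary $\pi_k(Y^k,Y^{k-1}) \to \pi_{k-1}(Y^{k-1})$ in $C_{k-1}$) equals $d\,d = 0$; the residual failure of the cocycle condition on $C_{k+2}$, taken modulo the freedom just identified, then assembles into a well-defined class $\mathcal O(Y) \in \widehat{\rm H}^{k+2}(Y, \Gamma_k Y)$ which vanishes exactly when a genuine lift exists — once more an instance of the general obstruction theory of \cite{Baues1}. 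Everything except the effectiveness/coboundary correspondence of the previous paragraph is either formal or a direct diagram chase; that correspondence is the crux, and is where I would invest the detailed argument (or the explicit appeal to \cite{Baues1}, Theorem II 3.3).
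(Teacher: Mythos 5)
Your proof is correct and follows the same route as the paper, which offers no internal argument at all but simply invokes Theorem II~3.3 of \cite{Baues1} immediately before the statement; you have unwound the content of that citation (the explicit construction $X' = (C, f_{k+1}+a, X^k)$ for a representing cocycle $a$, the identification $\ker j = \Gamma_k$, transitivity, and the cocycle-vs.-coboundary bookkeeping for effectiveness) rather than leaving it as a reference. Since you yourself fall back on Theorem II~3.3 for the effectiveness and descent-to-cohomology step, the two proofs coincide in substance.
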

For objects $X$ and $Y$ in ${\rm\bf H}_{k+1}^c$ and a morphism $f: \overline X \rightarrow \overline Y$ in ${\rm\bf H}_k^c$, Propositions \ref{realize} and \ref{objrealize} yield
\begin{equation}
\mathcal O_{X + \alpha, Y + \beta}(f) =  \mathcal O_{X, Y}(f) - f_{\ast} \alpha + f^{\ast} \beta
\end{equation}
for all $\alpha \in \widehat{\rm H}^{k+1}(\overline X, \Gamma_k \overline X)$ and $\beta \in \widehat{\rm H}^{k+1}(\overline Y, \Gamma_k \overline Y)$. Given another object $Z$ in ${\rm\bf H}_{k+1}^c$ with $\lambda Z = \overline Z$,
\begin{eqnarray}
\mathcal O_{X \otimes Z, Y \otimes Z}(f \otimes {\rm id}_{\overline Z}) 
& = &  \overline \iota_{1\ast} \overline p_1^{\ast} \mathcal O_{X, Y}(f), \label{tens1} \\
\mathcal O_{Z \otimes X, Z \otimes Y}({\rm id}_{\overline Z} \otimes f) 
& = & \overline \iota_{2\ast} \overline p_2^{\ast} \mathcal O_{X, Y}(f), \label{tens2}
\end{eqnarray}
where $\overline \iota_1: \overline X \rightarrow \overline X \otimes \overline Z$ and $\overline p_1: \overline X \otimes \overline Z \rightarrow \overline X$ are the inclusion of and projection onto the first factor and $\overline \iota_2$ and $\overline p_2$ are defined  analogously. We obtain
\begin{equation}\label{tensobjact}
(X + \alpha) \otimes (Y + \beta) = (X \otimes Y) + \overline \iota_{1\ast} \overline p_1^{\ast} \alpha + \overline \iota_{2\ast} \overline p_2^{\ast} \beta.
\end{equation}

\section{Obstructions to the diagonal}\label{diagobs}
Let $k \geq 2$. A \emph{diagonal} on $X = (C, f_{k+1}, X^k)$ in ${\rm\bf H}_{k+1}^c$ is a morphism, $\Delta: X \rightarrow X \otimes X$, such that, for $i = 1, 2$, the diagram
\begin{equation}
\xymatrix{
X \ar[rr]^-{\Delta} \ar[drr]_{\rm id} && X \otimes X \ar[d]^{p_i} \\
&& X}
\end{equation}
commutes up to homotopy in ${\rm\bf H}_{k+1}^c$. 
Applying the functor $r: {\rm\bf CW}_0 \rightarrow {\rm\bf H}_{k}^c$ to a diagonal $\Delta: X \rightarrow X \times X$ in ${\rm\bf CW}_0$, we obtain the diagonal $r(\Delta): r(X) \rightarrow r(X) \otimes r(X)$
in ${\rm\bf H}_k^c$. 
\begin{lemma}\label{diagreal0}
Let $X$ be an object in ${\rm\bf H}_{k+1}^c$. Then every $\lambda$--realizable diagonal $\overline \Delta: \overline X = \lambda X \rightarrow \overline X \otimes \overline X$ in ${\rm\bf H}_k^c/\simeq$ has a $\lambda$--realization $\Delta: X \rightarrow X \otimes X$ in ${\rm\bf H}_{k+1}^c/\simeq$ which is a diagonal in ${\rm\bf H}_{k+1}^c$.
\end{lemma}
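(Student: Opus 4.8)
The plan is to start with a $\lambda$--realization $\overline{\Delta}_0 \colon \overline X \to \overline X \otimes \overline X$, which exists by hypothesis, and upgrade it to a diagonal. Since $\overline{\Delta}$ is a diagonal in ${\rm\bf H}_k^c$, we have $p_i \overline{\Delta} \simeq {\rm id}_{\overline X}$ for $i = 1,2$. Applying Proposition \ref{realize}, the obstruction $\mathcal O_{X, X \otimes X}(\overline{\Delta}) \in \widehat{\rm H}^{k+1}(\overline X, \varphi^\ast \Gamma_k(\overline X \otimes \overline X))$ vanishes (as $\overline{\Delta}$ is $\lambda$--realizable by assumption), so there is \emph{some} realization $\Delta_0 \colon X \to X \otimes X$ in ${\rm\bf H}_{k+1}^c$ with $\lambda \Delta_0 = \overline{\Delta}$. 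The only thing that may fail is that $p_i \Delta_0 \simeq {\rm id}_X$ need not hold in ${\rm\bf H}_{k+1}^c$, even though it holds after applying $\lambda$. So the task is to modify $\Delta_0$ within its $\lambda$--fibre so that both projection conditions hold on the nose up to homotopy in ${\rm\bf H}_{k+1}^c$.

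The key tool is Proposition \ref{actdist}: the group $\widehat{\rm H}^k(\overline X, \varphi^\ast \Gamma_k(\overline X \otimes \overline X))$ acts transitively on the set of $\lambda$--realizations of $\overline{\Delta}$, so every candidate diagonal over $\overline{\Delta}$ has the form $\Delta_0 + \alpha$ for a unique $\alpha$ in this cohomology group. I would then compute $p_i(\Delta_0 + \alpha)$ using the linear distributivity law \eqref{distributivity}: since $p_i$ is a fixed map, $p_i(\Delta_0 + \alpha) = p_i \Delta_0 + (p_i)_\ast \alpha$, using that $p_i$ is a genuine morphism (no perturbation on the $p_i$ side). Now $p_i \Delta_0$ and ${\rm id}_X$ are two elements of $[X, X]_{\rm id}$ lying over the same class $p_i \overline{\Delta_0} = {\rm id}_{\overline X}$ in ${\rm\bf H}_k^c$, so by Proposition \ref{actdist} there is a class $\beta_i \in \widehat{\rm H}^k(\overline X, \Gamma_k \overline X)$ with $p_i \Delta_0 = {\rm id}_X + \beta_i$. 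I want to choose $\alpha$ so that $(p_i)_\ast \alpha = -\beta_i$ for \emph{both} $i = 1, 2$ simultaneously.

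The reason this can be arranged is the splitting structure of $\Gamma_k(\overline X \otimes \overline X)$. As in the proof of Theorem \ref{pd4real}, $\pi_k(\overline X \otimes \overline X) \cong \pi_k \overline X \oplus \pi_k \overline X$ (via $\overline\iota_{1\ast}, \overline\iota_{2\ast}$), and the coefficient functor decomposes: the projections $\overline p_{i\ast} \colon \Gamma_k(\overline X \otimes \overline X) \to \Gamma_k \overline X$ are split surjections, with $\overline p_{i\ast} \overline\iota_{j\ast} = {\rm id}$ if $i = j$ and the trivial map if $i \ne j$ (these follow from $p_i \iota_j$ being the identity or trivial map, as recorded in Section \ref{homsys}). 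Hence I would set $\alpha = \overline\iota_{1\ast}(-\beta_1) + \overline\iota_{2\ast}(-\beta_2) \in \widehat{\rm H}^k(\overline X, \Gamma_k(\overline X \otimes \overline X))$; then $\overline p_{1\ast}\alpha = -\beta_1$ and $\overline p_{2\ast}\alpha = -\beta_2$. Setting $\Delta = \Delta_0 + \alpha$ gives $p_i \Delta = {\rm id}_X + \beta_i - \beta_i = {\rm id}_X$ in $[X,X]_{\rm id}$, i.e.\ $p_i \Delta \simeq {\rm id}_X$ in ${\rm\bf H}_{k+1}^c$, and by construction $\lambda \Delta = \lambda \Delta_0 + \lambda\alpha = \overline{\Delta}$ (the class $\alpha$ lives in the kernel of $\lambda$ on morphisms). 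So $\Delta$ is the required diagonal realizing $\overline\Delta$.

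The main obstacle is bookkeeping the identification $p_i(\Delta_0 + \alpha) = p_i\Delta_0 + \overline p_{i\ast}\alpha$ correctly: one must check that composing a perturbed realization $\Delta_0 + \alpha$ with the \emph{un}perturbed morphism $p_i$ indeed acts on the cohomology class via $\overline p_{i\ast}$ (the functorial pushforward on $\Gamma_k$), which is the $\alpha \mapsto g^\ast$--free instance of \eqref{distributivity} with $\beta = 0$, $g_0 = p_i$, $f_0 = \Delta_0$; and that the splitting of $\Gamma_k(\overline X\otimes\overline X)$ induced by $\overline\iota_{j\ast}$ is compatible with $\overline p_{i\ast}$ in the stated way. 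Both are formal consequences of the monoidal structure on ${\rm\bf H}_{k+1}^c$ and the relations $p_i\iota_j = {\rm id}$ or trivial recorded in Section \ref{homsys}, together with the additivity of $\Gamma_k$ on the direct sum $\pi_k\overline X \oplus \pi_k\overline X$; no genuinely hard computation is involved.
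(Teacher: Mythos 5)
Your argument is essentially the paper's own proof: take any $\lambda$--realization $\Delta_0$ of $\overline\Delta$, measure the deviation of $p_\ell\Delta_0$ from ${\rm id}_X$ by classes $\beta_\ell \in \widehat{\rm H}^k(\overline X, \Gamma_k\overline X)$, and shift $\Delta_0$ by $\overline\iota_{1\ast}(-\beta_1) + \overline\iota_{2\ast}(-\beta_2)$, verifying $p_\ell\Delta \simeq {\rm id}_X$ via the distributivity law and the relations $\overline p_{\ell\ast}\overline\iota_{j\ast} = {\rm id}$ or $0$ (the paper's $\alpha_\ell$ are your $-\beta_\ell$). One minor terminological slip: $\Gamma_k$ is quadratic rather than additive, so $\Gamma_k(\pi_k\overline X\oplus\pi_k\overline X)$ carries a cross-term $\pi_k\overline X\otimes\pi_k\overline X$, but your argument only invokes functoriality of $\Gamma_k$ applied to $p_\ell\iota_j$, which indeed suffices.
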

\begin{proof}
Suppose $\Delta': X \rightarrow X \otimes X$ is a $\lambda$--realization of $\overline \Delta$ in ${\rm\bf H}_{k+1}^c$. The projection $p_{\ell}: X \rightarrow X \otimes X$ realizes the projection $\overline{p}_{\ell}: \overline X \rightarrow \overline X \otimes \overline X$ and hence $p_{\ell} \Delta'$ realizes $\overline p_{\ell} \overline \Delta$ for $\ell = 1, 2$. Now the identity on $X$ realizes the identity on $\overline X$ and $\overline p_{\ell} \Delta$ is homotopic to the identity on $\overline X$ by assumption. Hence $p_{\ell} \Delta'$ and the identity on $X$ realize the same homotopy class of maps for $\ell = 1, 2$. As the group ${\widehat{\rm H}}^k(\overline{X}, \Gamma_k \overline X)$ acts transitively on the set of realizations of this homotopy class by Proposition \ref{actdist}, there are elements $\alpha_{\ell} \in {\widehat{\rm H}}^k(\overline X, \Gamma_k \overline X)$ such that
\begin{equation*}
\{p_{\ell} \Delta' \} + \alpha_{\ell} = \{ {\rm id}_X \} \quad {\rm{for}} \ {\ell} = 1,2,
\end{equation*}
where $\{f\}$ denotes the homotopy class of the morphism $f$ in ${\rm\bf H}_{k+1}^c$. We put
\begin{equation*}
\{\Delta\} = \{\Delta'\} + \iota_1 \alpha_1 + \iota_2 \alpha_2.
\end{equation*}
By Proposition \ref{actdist},
\begin{eqnarray*}
\{p_{\ell} \Delta \} & = & \{p_{\ell}\}(\{\Delta'\} + \iota_1 \alpha_1 + \iota_2 \alpha_2) \\
& = & \{p_{\ell} \Delta'\} + \overline p_{\ell \ast} \iota_1 \alpha_1 + \overline p_{\ell \ast} \iota_2 \alpha_2 \\
& = & \{p_{\ell} \Delta' \} +  \alpha_{\ell}  
=  \{ {\rm id}_X \}.
\end{eqnarray*}
\end{proof}
\begin{lemma}\label{diagreal1}
For $X$ in ${\rm\bf H}_{k+1}^c$, let $\Delta_{\overline X}: \overline X \rightarrow \overline X \otimes \overline X$ be a diagonal on $\overline X = \lambda X$ in ${\rm\bf H}_k^c$. Then we obtain, in ${\rm H}^{k+1}(\overline X, \Gamma_k(\overline X \otimes \overline X)$,
\begin{enumerate}
\item $O_{X, X \otimes X}(\Delta_{\overline X}) \in {\rm ker} \ \overline p_{i\ast}$ for $i = 1, 2$, 
\item $\mathcal O_{X, X \otimes X}(\Delta_{\overline X}) \in {\rm ker}({\rm id}_{\overline X \ast} - T_{\ast})_{\ast}$ if $\Delta_{\overline X}$ is homotopy commutative and
\item $\mathcal O_{X, X \otimes X}(\Delta_{\overline X}) \in {\rm ker}\big(\overline \iota_{1,2 \ast} - \overline \iota_{2,3 \ast} + (\Delta_{\overline X} \otimes {\rm id}_{\overline X})_{\ast} - ({\rm id}_{\overline X} \otimes \Delta_{\overline X})_{\ast}\big)_{\ast}$ if $\Delta_{\overline X}$ is homotopy associative. \label{homass}
\end{enumerate}
\end{lemma}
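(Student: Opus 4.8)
The plan is to prove all three parts with the same device: the obstruction $\mathcal O$ is a derivation and depends only on homotopy classes (Proposition \ref{realize}), it vanishes on $\lambda$--realizable maps, and the obstruction of a map of the form $f \otimes {\rm id}$ is computed from that of $f$ via formulas (\ref{tens1}) and (\ref{tens2}). Throughout write $\mathcal O = \mathcal O_{X, X \otimes X}(\Delta_{\overline X}) \in \widehat{\rm H}^{k+1}(\overline X, \Gamma_k(\overline X \otimes \overline X))$. The first thing to record is that the projections $p_i \colon X \otimes X \to X$ and the interchange $T \colon X \otimes X \to X \otimes X$ are genuine morphisms in ${\rm\bf H}_{k+1}^c$, hence $\lambda$--realize $\overline p_i$ and $\overline T$; so $\mathcal O_{X \otimes X, X}(\overline p_i) = 0$, $\mathcal O_{X \otimes X, X \otimes X}(\overline T) = 0$, and likewise $\mathcal O_{X, X}({\rm id}_{\overline X}) = 0$ because ${\rm id}_X$ realizes ${\rm id}_{\overline X}$.

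For (1), the diagonal condition gives $\overline p_i \Delta_{\overline X} \simeq {\rm id}_{\overline X}$ in ${\rm\bf H}_k^c$, so $\mathcal O_{X, X}(\overline p_i \Delta_{\overline X}) = \mathcal O_{X, X}({\rm id}_{\overline X}) = 0$; expanding the left-hand side by the derivation property, $0 = \overline p_{i \ast} \mathcal O + \Delta_{\overline X}^{\ast} \mathcal O_{X \otimes X, X}(\overline p_i) = \overline p_{i \ast} \mathcal O$, so $\mathcal O \in \ker \overline p_{i \ast}$. For (2), homotopy commutativity gives $\overline T \Delta_{\overline X} \simeq \Delta_{\overline X}$, whence $\mathcal O = \mathcal O_{X, X \otimes X}(\overline T \Delta_{\overline X}) = \overline T_{\ast} \mathcal O + \Delta_{\overline X}^{\ast} \mathcal O_{X \otimes X, X \otimes X}(\overline T) = \overline T_{\ast} \mathcal O$, that is, $({\rm id}_{\ast} - \overline T_{\ast}) \mathcal O = 0$.

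For (3), put $\Delta_L = (\Delta_{\overline X} \otimes {\rm id}_{\overline X}) \Delta_{\overline X}$ and $\Delta_R = ({\rm id}_{\overline X} \otimes \Delta_{\overline X}) \Delta_{\overline X}$, two maps $\overline X \to \overline X \otimes \overline X \otimes \overline X$ which are homotopic by homotopy associativity, so their obstructions agree. Expanding $\mathcal O_{X, X \otimes X \otimes X}(\Delta_L)$ by the derivation property and applying (\ref{tens1}) (with $f = \Delta_{\overline X}$ and third tensor factor $X$) gives $(\Delta_{\overline X} \otimes {\rm id})_{\ast} \mathcal O + \Delta_{\overline X}^{\ast} \overline\iota_{1,2\ast} \overline p_1^{\ast} \mathcal O$, where $\overline p_1 \colon \overline X \otimes \overline X \to \overline X$ is the first projection and $\overline\iota_{1,2} \colon \overline X \otimes \overline X \to \overline X \otimes \overline X \otimes \overline X$ the inclusion of the first two tensor factors; since the coefficient map $\overline\iota_{1,2\ast}$ commutes with pullback in the space variable and $\overline p_1 \Delta_{\overline X} \simeq {\rm id}_{\overline X}$, the second term reduces to $\overline\iota_{1,2\ast} \mathcal O$, so $\mathcal O_{X, X \otimes X \otimes X}(\Delta_L) = (\Delta_{\overline X} \otimes {\rm id})_{\ast} \mathcal O + \overline\iota_{1,2\ast} \mathcal O$. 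Symmetrically, via (\ref{tens2}) and $\overline p_2 \Delta_{\overline X} \simeq {\rm id}_{\overline X}$, $\mathcal O_{X, X \otimes X \otimes X}(\Delta_R) = ({\rm id} \otimes \Delta_{\overline X})_{\ast} \mathcal O + \overline\iota_{2,3\ast} \mathcal O$. Equating the two and rearranging shows that $\mathcal O$ lies in the kernel of $\overline\iota_{1,2\ast} - \overline\iota_{2,3\ast} + (\Delta_{\overline X} \otimes {\rm id})_{\ast} - ({\rm id} \otimes \Delta_{\overline X})_{\ast}$, which is exactly (3).

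Because the obstruction-theoretic machinery is already available, I do not expect a genuine obstacle; the parts needing care are purely bookkeeping --- keeping straight which morphism acts on the coefficient system $\Gamma_k(-)$ and which pulls back the space variable of $\widehat{\rm H}^{k+1}$, and, in (3), recognising $\Delta_{\overline X} \otimes {\rm id}_{\overline X}$ and ${\rm id}_{\overline X} \otimes \Delta_{\overline X}$ as instances of $f \otimes {\rm id}$ and ${\rm id} \otimes f$ with $f = \Delta_{\overline X}$ so that (\ref{tens1}) and (\ref{tens2}) apply, along with the canonical associativity isomorphism identifying $(\overline X \otimes \overline X) \otimes \overline X$ and $\overline X \otimes (\overline X \otimes \overline X)$ with $\overline X \otimes \overline X \otimes \overline X$, which is $\lambda$--realizable and hence leaves obstructions unchanged.
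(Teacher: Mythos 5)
Your proof is correct and takes essentially the same approach as the paper: use the derivation property of $\mathcal O$ together with the vanishing of $\mathcal O$ on $\lambda$--realizable maps for parts (1) and (2), and for part (3) combine the derivation property with formulas (\ref{tens1}) and (\ref{tens2}), the identity $\overline p_1 \Delta_{\overline X} \simeq {\rm id}_{\overline X}$, and the homotopy $\Delta_L \simeq \Delta_R$.
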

\begin{proof}
By definition, $\overline p_i \Delta_{\overline X} \simeq {\rm id}_{\overline X}$ for $i = 1, 2$. As the identity on $\overline X$ is realized by the identity on $X$ and $\overline p_i: \overline X \otimes \overline X \rightarrow \overline X$ is realized by $p_i: X \otimes X \rightarrow X$, Proposition \ref{realize} implies $\mathcal O_{X, X \otimes X}({\rm id}_{\overline X}) = 0$ and $\mathcal O_{X \otimes X, X}(\overline p_i) = 0$. Since $\mathcal O$ is a derivation, we obtain 
\begin{equation*}
0 = \mathcal O_{X, X}(\overline p_i \Delta_{\overline X}) = \overline p_{i\ast} \mathcal O_{X, X \otimes X}(\Delta_{\overline X}) + \Delta_{\overline X}^{\ast} \mathcal O_{X \otimes X, X}(\overline p_i) = \overline p_{i\ast} \mathcal O_{X, X \otimes X}(\Delta_{\overline X}),
\end{equation*}
and hence $O_{X, X \otimes X}(\Delta_{\overline X}) \in {\rm ker} \ \overline p_{i\ast}$ for $i = 1, 2$. If $\Delta_{\overline X}$ is homotopy commutative, then 
\begin{equation*}
\mathcal O_{X, X \otimes X}(\Delta_{\overline X}) = \mathcal O_{X, X \otimes X}(T \Delta_{\overline X}) = T_{\ast} \mathcal O_{X, X \otimes X}(\Delta_{\overline X}),
\end{equation*}
since $\mathcal O_{X \otimes X, X \otimes X}(T) = 0$, as $T$ is $\lambda$--realizable. Hence $\mathcal O_{X, X \otimes X}(\Delta_{\overline X}) \in {\rm ker}({\rm id}_{\overline X \ast} - T_{\ast})_{\ast}$. For $1 \leq k < \ell, \leq 3$, let $\iota_{k,\ell}: X \otimes X \rightarrow X \otimes X \otimes X$ denote the inclusion of the $k$-th and $\ell$-th factors and suppose $\Delta_{\overline X}$ is a homotopy commutative diagonal in ${\rm\bf H}_k^c$. Then $\mathcal O_{X, X \otimes X \otimes X}((\Delta_{\overline X} \otimes {\rm id}_{\overline X}) \Delta_{\overline X}) = \mathcal O_{X, X \otimes X \otimes X}(({\rm id}_{\overline X} \otimes \Delta_{\overline X}) \Delta_{\overline X})$, as the obstruction depends on the homotopy class of a morphism only, and
\begin{eqnarray*}
\mathcal O_{X, X \otimes X \otimes X}(\Delta_{\overline X} \otimes {\rm id}_{\overline X}) 
& = & \overline \iota_{1,2 \ast} \overline p_1^{\ast} \mathcal O_{X, X \otimes X}(\Delta_{\overline X}) \\
\mathcal O_{X, X \otimes X \otimes X}({\rm id}_{\overline X} \otimes \Delta_{\overline X}) 
& = & \overline \iota_{2,3 \ast} \overline p_2^{\ast} \mathcal O_{X, X \otimes X}(\Delta_{\overline X},
\end{eqnarray*}
by (\ref{tens1}) and (\ref{tens2}). Omitting the objects in the notation for the obstruction, we obtain
\begin{eqnarray*}
\mathcal O((\Delta_{\overline X} \otimes {\rm id}_{\overline X}) \Delta_{\overline X})
& = &  \Delta_{\overline X}^\ast \mathcal O(\Delta_{\overline X} \otimes {\rm id}_{\overline X}) 
+ (\Delta_{\overline X} \otimes {\rm id}_{\overline X})_{\ast} \mathcal O(\Delta_{\overline X}) \\
& = & \Delta_{\overline X}^\ast \overline \iota_{1,2 \ast} \overline p_1^{\ast} \mathcal O(\Delta_{\overline X}) + (\Delta_{\overline X} \otimes {\rm id}_{\overline X})_{\ast} \mathcal O(\Delta_{\overline X}) \\
& = &  \overline \iota_{1,2 \ast} (\overline p_1\Delta_{\overline X})^{\ast} \mathcal O(\Delta_{\overline X}) + (\Delta_{\overline X} \otimes {\rm id}_{\overline X})_{\ast} \mathcal O(\Delta_{\overline X}) \\
& = & \overline \iota_{1,2 \ast} \mathcal O(\Delta_{\overline X})
+ (\Delta_{\overline X} \otimes {\rm id}_{\overline X})_{\ast} \mathcal O(\Delta_{\overline X}).
\end{eqnarray*}
Similarly, we obtain
\begin{equation*}
\mathcal O(({\rm id}_{\overline X} \otimes \Delta_{\overline X}) \Delta_{\overline X})
= \overline \iota_{2,3 \ast} \mathcal O(\Delta_{\overline X})
+ (\Delta_{\overline X} \otimes {\rm id}_{\overline X})_{\ast} \mathcal O(\Delta_{\overline X}),
\end{equation*}
which proves (\ref{homass}).
\end{proof}
\begin{question}
Given a $\lambda$--realizable object $\overline X$ with a diagonal $\Delta_{\overline X}: \overline X \rightarrow \overline X \otimes \overline X$ in ${\rm\bf H}_k^c$, is there an object $X$ with $\lambda X = \overline X$ and a diagonal $\Delta_X: X \rightarrow X \otimes X$ in ${\rm\bf H}_{k+1}^c$ such that $\lambda \Delta_X = \Delta_{\overline X}$?
\end{question}
Let $X$ in ${\rm\bf H}_{k+1}^c$ be a $\lambda$--realization of $\overline X$. By Proposition \ref{objrealize}, any $\lambda$--realization $X'$ of $\overline X$ is of the form $X' = X + \alpha$ for some $\alpha \in \widehat{\rm H}^{k+1}(\overline X, \Gamma_k \overline X)$. By (\ref{tensobjact}), $X' \otimes X' = (X \otimes X) + \overline \iota_{1\ast} \overline p_1^{\ast} \alpha + \overline \iota_{2\ast} \overline p_2^{\ast} \alpha$ and as the obstruction $\mathcal O$ is a derivation, we obtain
\begin{eqnarray*}
\mathcal O_{X', X' \otimes X'}(\Delta_{\overline X})
& = & \mathcal O_{X + \alpha, (X + \alpha) \otimes (X + \alpha)}(\Delta_{\overline X})\\
& = & \mathcal O_{X, X \otimes X}(\Delta_{\overline X}) - \Delta_{\overline X \ast} \alpha + \Delta_{\overline X}^{\ast} (\overline \iota_{1\ast} \overline p_1^{\ast} \alpha + \iota_{2\ast} \overline p_2^{\ast} \alpha)\\
& = & \mathcal O_{X, X \otimes X}(\Delta_{\overline X}) - (\Delta_{\overline X \ast} - \overline \iota_{1 \ast} - \overline \iota_{2 \ast}) \alpha,
\end{eqnarray*}
since $\Delta^{\ast} \overline \iota_{i\ast} \overline p_i^{\ast} = \overline \iota_{i\ast} (\overline p_i \Delta)^{\ast} = \overline \iota_{i\ast}$, for $i = 1, 2$.

\begin{lemma}\label{obsact}
For $X$ in ${\rm\bf H}_{k+1}^c$, let $\Delta_{\overline X}: \overline X \rightarrow \overline X \otimes \overline X$ be a diagonal on $\overline X = \lambda X$ in ${\rm\bf H}_k^c$ and let $X' = X + \alpha$ for some $\alpha \in \widehat{\rm H}^{k+1}(\overline X, \Gamma_k \overline X)$. Then we obtain, in ${\rm H}^{k+1}(\overline X, \Gamma_k(\overline X \otimes \overline X)$,
\begin{equation*}
\mathcal O_{X', X' \otimes X'}(\Delta_{\overline X}) = \mathcal O_{X, X \otimes X}(\Delta_{\overline X}) - (\Delta_{\overline X \ast} - \overline \iota_{1 \ast} - \overline \iota_{2 \ast}) \alpha.
\end{equation*}
\end{lemma}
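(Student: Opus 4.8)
The plan is to reduce the statement to the two facts already recorded in Section~\ref{homsys}: the behaviour of the obstruction under the action of Proposition~\ref{objrealize}, namely $\mathcal O_{X+\alpha, Y+\beta}(f) = \mathcal O_{X,Y}(f) - f_{\ast}\alpha + f^{\ast}\beta$, and the formula (\ref{tensobjact}) for the tensor product of shifted objects. First I would invoke Proposition~\ref{objrealize} to write $X' = X + \alpha$, and then apply (\ref{tensobjact}) with $Y = X$ and $\beta = \alpha$ to obtain $X' \otimes X' = (X \otimes X) + \beta'$ with $\beta' = \overline\iota_{1\ast}\overline p_1^{\ast}\alpha + \overline\iota_{2\ast}\overline p_2^{\ast}\alpha \in \widehat{\rm H}^{k+1}(\overline X \otimes \overline X, \Gamma_k(\overline X \otimes \overline X))$.

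Substituting $f = \Delta_{\overline X}$ into the obstruction-under-action formula then gives
\begin{equation*}
\mathcal O_{X', X' \otimes X'}(\Delta_{\overline X}) = \mathcal O_{X, X \otimes X}(\Delta_{\overline X}) - \Delta_{\overline X \ast}\alpha + \Delta_{\overline X}^{\ast}\big(\overline\iota_{1\ast}\overline p_1^{\ast}\alpha + \overline\iota_{2\ast}\overline p_2^{\ast}\alpha\big).
\end{equation*}
The remaining step is to simplify the last term. By naturality of the coefficient change $\overline\iota_{i\ast} = \Gamma_k(\overline\iota_i)_{\ast}$ with respect to the contravariant (space) variable, $\Delta_{\overline X}^{\ast}$ commutes with $\overline\iota_{i\ast}$, so $\Delta_{\overline X}^{\ast}\overline\iota_{i\ast}\overline p_i^{\ast}\alpha = \overline\iota_{i\ast}(\overline p_i\Delta_{\overline X})^{\ast}\alpha$; since $\overline p_i\Delta_{\overline X} \simeq {\rm id}_{\overline X}$ by the defining property of a diagonal and $\mathcal O$ (being a derivation) depends only on homotopy classes, this equals $\overline\iota_{i\ast}\alpha$. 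Collecting terms yields the claimed identity $\mathcal O_{X', X' \otimes X'}(\Delta_{\overline X}) = \mathcal O_{X, X \otimes X}(\Delta_{\overline X}) - (\Delta_{\overline X \ast} - \overline\iota_{1\ast} - \overline\iota_{2\ast})\alpha$.

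There is essentially no serious obstacle here; the only point requiring care is the bookkeeping of the two ways $\widehat{\rm H}^{k+1}(-,-)$ is functorial — covariantly in the $\Gamma_k$-coefficients (through which $\Delta_{\overline X \ast}$ and $\overline\iota_{i\ast}$ act) and contravariantly in the space (through which $\overline p_i^{\ast}$ and $\Delta_{\overline X}^{\ast}$ act) — together with the freedom to replace $\overline p_i\Delta_{\overline X}$ by ${\rm id}_{\overline X}$ up to homotopy. Both are supplied by Proposition~\ref{realize} and the discussion following Proposition~\ref{objrealize}, so the argument is a direct assembly of already-established formulae.
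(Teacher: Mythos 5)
Your proposal is correct and follows essentially the same route as the paper: it is, almost line for line, the computation the paper records in the passage immediately preceding Lemma~\ref{obsact}, combining Proposition~\ref{objrealize}, the tensor action formula~(\ref{tensobjact}), the shift formula for obstructions, and the identity $\Delta_{\overline X}^{\ast}\,\overline\iota_{i\ast}\,\overline p_i^{\ast} = \overline\iota_{i\ast}(\overline p_i \Delta_{\overline X})^{\ast} = \overline\iota_{i\ast}$.
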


\section{$\rm PD^n$--homotopy systems}\label{pdhomsys}
A \emph{$\rm PD^n$--homotopy system} $X = (X, \omega_X, [X], \Delta_X)$ of order $(k+1)$ consists of an object $X = (C, f_{k+1}, X^k)$ in ${\rm\bf H}_{k+1}^c$, a group homomorphism $\omega_X: \pi_1 X \rightarrow \mathbb Z / 2 \mathbb Z$, a fundamental class $[X] \in {\rm H}_n(C, \mathbb Z^{\omega})$ and a diagonal $\Delta: X \rightarrow X \otimes X$ in ${\rm\bf H}_{k+1}^c$ such that $(C, \omega_X, [X], \Delta_X)$ is a ${\rm PD}^n$--chain complex. A map $f:  (X, \omega_X, [X], \Delta_X) \rightarrow (Y, \omega_Y, [Y], \Delta_Y)$ of $\rm PD^n$--homotopy systems of order $(k+1)$ is a morphism in ${\rm\bf H}_{k+1}^c$ such that $\omega_X = \omega_Y \pi_1(f)$ and $(f \otimes f) \Delta_X \simeq \Delta_Y f$,
and we thus obtain the category ${\rm\bf PD}^n_{[k+1]}$ of $\rm PD^n$--homotopy systems of order $(k+1)$. Homotopies in ${\rm\bf PD}^n_{[k+1]}$ are homotopies in ${\rm\bf H}_{k+1}^c$, and restricting the functors in (\ref{functors}), we obtain, for $k \geq 3$, the functors
\begin{equation}
\xymatrix{
{\rm\bf PD}^n \ar[r]^-r & {\rm\bf PD}^n_{[k+1]} \ar[r]^-{\lambda} & {\rm\bf PD}_{[k]}^n \ar[r]^C & {\rm\bf PD}^n_{\ast}.}
\end{equation}
These functors induce functors between homotopy categories
\begin{equation*}
\xymatrix{
{\rm\bf PD}^n / \simeq \ar[r]^-r & {\rm\bf PD}^n_{[k+1]} / \simeq \ar[r]^-{\lambda} & {\rm\bf PD}^n_{[k]} / \simeq \ar[r]^C & {\rm\bf PD}^n_{\ast}/\simeq.}
\end{equation*}
\begin{theorem}\label{pdnfuncequ}
The functor $C: {\rm\bf PD}_{[3]}^n/\simeq \ \longrightarrow  {\rm\bf PD}^n_{\ast}/\simeq$ is an equivalence of categories for $n \geq 3$.
\end{theorem}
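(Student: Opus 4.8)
The plan is to show that $C: {\rm\bf PD}_{[3]}^n/\simeq \longrightarrow {\rm\bf PD}^n_{\ast}/\simeq$ is full, faithful (on morphisms, up to homotopy) and representative, which together establish that it is an equivalence of categories. The key point is that a ${\rm PD}^n$--homotopy system of order $3$ consists of an object $(C, f_3, X^2)$ in ${\rm\bf H}_3^c$ together with the extra ${\rm PD}^n$--data $(\omega, [C], \Delta)$, and the functor $C$ simply forgets $f_3$ and $X^2$, retaining $(C,\omega,[C],\Delta)$. So the content of the theorem is that the $2$--skeletal data $(f_3, X^2)$ is, up to homotopy, uniquely and freely recoverable from the chain complex $C$ together with its diagonal.

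First I would treat representativity. Given a ${\rm PD}^n$--chain complex $C = ((\pi,C),\omega,[C],\Delta)$, Lemma \ref{2real} lets us replace it up to homotopy in ${\rm\bf PD}^n_\ast$ by a $2$--realizable one, so there is an object $X$ in ${\rm\bf CW}_0$ with $\widehat C(X^2) \cong C_{\leq 2}$; this yields $X^2$ and, via the attaching map of $3$--cells together with the boundary $d_3: C_3 \to C_2 = \widehat C_2(X^2)$, a homomorphism $f_3: C_3 \to \pi_2(X^2)$ satisfying the required commuting square and the cocycle condition $f_3 d(C_4) = 0$ (the latter because $d_3 d_4 = 0$ and $h_2$ is injective on the relevant submodule). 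This produces an object in ${\rm\bf H}_3^c$; to promote it to ${\rm\bf PD}^n_{[3]}$ we must realize the diagonal $\Delta$ on $C$ by a diagonal on $(C,f_3,X^2)$ in ${\rm\bf H}_3^c$. Here I would appeal to the obstruction theory of Section \ref{homsys}: the relevant obstruction lives in $\widehat{\rm H}^{3}(\overline X, \Gamma_2(\overline X \otimes \overline X))$, but since $\Gamma_2$ is built from $\pi_2$ of a $2$--complex and the cohomological degree exceeds the relevant range, this obstruction group vanishes — more precisely, homotopy systems of order $3$ sit at the bottom of the tower, so $\lambda$-realizability questions into ${\rm\bf H}_2^c$ are vacuous and the diagonal, co-associativity data etc. descend automatically. (This is exactly the phenomenon already used implicitly in Section \ref{pd3section}, and it is why $n=3$ and the $2$--type both appear at order $3$.)

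For fullness and faithfulness I would argue analogously with morphisms. A morphism $f: C \to C'$ in ${\rm\bf PD}^n_\ast$ is a $\varphi$-equivariant chain map compatible with $\omega$ and with $(f\otimes f)\Delta \simeq \Delta' f$. Since the target homotopy systems have order $3$, Proposition \ref{realize} says the obstruction to realizing $f_{\leq 2}$-plus-chain-data by a morphism in ${\rm\bf H}_3^c$ lies in $\widehat{\rm H}^{3}(\overline X, \varphi^\ast\Gamma_2 \overline Y)$; realizing the underlying chain map at the $2$-skeleton we can take $\eta: X^2 \to Y^2$ inducing $\varphi$ (possible because $X^2, Y^2$ are $2$-dimensional and $\varphi$ realizes the right map on $\pi_1$ by Lemma \ref{whitehead}-type considerations), and the obstruction to extending compatibly with $f_3$ vanishes for dimension reasons. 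Two realizations differ by an element of $\widehat{\rm H}^{2}(\overline X, \varphi^\ast\Gamma_2\overline Y)$ by Proposition \ref{actdist}; I would check that this acts trivially after passing to homotopy in ${\rm\bf PD}^n_\ast$, giving faithfulness, and that the compatibility $(f\otimes f)\Delta \simeq \Delta' f$ lifts since the diagonal-compatibility is again a condition whose obstruction sits above the range.

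The main obstacle I expect is bookkeeping the $\Gamma$-functor coefficient groups and verifying that every obstruction and every indeterminacy group that arises genuinely vanishes or acts trivially at the order-$3$ / $2$-skeleton level — in particular handling the diagonal and its higher coherences (co-associativity, co-commutativity) consistently, so that the equivalence respects the full ${\rm PD}^n$-structure and not merely the underlying chain complex. This is precisely where the cited results II 3.3 of \cite{Baues1} and the machinery of Section \ref{homsys} do the heavy lifting, and the proof reduces to citing Theorems \ref{pdnfuncequ}'s companion \ref{pdnfuncdet} together with these propositions; I would phrase the final writeup as a short deduction from that obstruction-theoretic package rather than a hands-on construction.
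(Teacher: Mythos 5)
Your proposal and the paper's proof diverge at the crucial methodological step. The paper does \emph{not} establish fullness and faithfulness of $C: {\rm\bf H}_3^c \to {\rm\bf H}_0$ via the obstruction theory of Section \ref{homsys}; it simply cites Theorems III 2.9 and III 2.12 of \cite{Baues1}, which are stand-alone results about the bottom of the tower of homotopy systems. The machinery of Propositions \ref{realize}, \ref{actdist}, \ref{objrealize} is built for the functor $\lambda: {\rm\bf H}_{k+1}^c \to {\rm\bf H}_k^c$, not for $C: {\rm\bf H}_3^c \to {\rm\bf H}_0$, and the relevant obstruction and indeterminacy groups (involving $\Gamma_2$) do \emph{not} vanish ``for dimensional reasons'' at order $3$ — $\Gamma_2(\overline Y)$ is the universal quadratic functor applied to $\pi_2$, which is typically nontrivial, and the cohomological degrees $2$ and $3$ are well within range over a $2$--complex. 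Your assertion that ``$\lambda$-realizability questions into ${\rm\bf H}_2^c$ are vacuous'' is not correct; the passage from order $2$ to order $3$ is exactly where the quadratic construction enters, and Baues's cited theorems are doing real work there, not recording a triviality.

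There is a second, subtler gap: for representativity, once you have $X = (D, f_3, X^2)$ in ${\rm\bf H}_3^c$, you need to lift the given diagonal $\overline\Delta$ on $D$ to a diagonal $\Delta$ on $X$ so as to land in ${\rm\bf PD}^n_{[3]}$. The paper does this with a short purely categorical argument: since $C$ is monoidal, full, and faithful, $\overline\Delta: D \to D \otimes D = C(X \otimes X)$ lifts to a morphism $\Delta: X \to X \otimes X$ in ${\rm\bf H}_3^c$, and then $C(p_i\Delta) = \overline p_i \overline\Delta \simeq {\rm id}_D = C({\rm id}_X)$ together with faithfulness (up to homotopy) forces $p_i \Delta \simeq {\rm id}_X$. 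Your proposal replaces this with an obstruction-vanishing argument that, as noted above, does not go through. So while your overall outline (full, faithful, representative) matches the paper's, the mechanism you use to establish each part is the wrong one and the vanishing claims you invoke are false in general; the correct mechanism is to cite Baues's III 2.9 and III 2.12 and then exploit monoidality.
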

\begin{proof}
The functor $C$ is full and faithful by Theorem III 2.9 and Theorem III 2.12 in \cite{Baues1}. By Lemma \ref{2real}, every ${\rm PD}^n$--chain complex, $\overline X = (D, \omega, [D], \overline\Delta)$, in ${\rm\bf{PD}}^n_{\ast}$ is $2$--realizable, that is, there is an object $X^2$ in ${\rm CW}^2_0$ such that $\widehat C(X^2) = D_{\leq 2}$, and we obtain the object $X = (D, f_3, X^2)$ in ${\rm\bf H}_3^c$. As $C$ is monoidal, full and faithful, the diagonal $\overline\Delta$ on $\overline X$ is realized by a diagonal $\Delta$ on $X$ and hence $(X, \omega, [D], \Delta)$ is an object in ${\rm\bf PD}_{[3]}^n$ with $C(X) = \overline X$.
\end{proof}
\begin{theorem}\label{pdnfuncdet}
For $n \geq 3$, the functor $r: {\rm\bf PD}^n/\simeq \ \longrightarrow {\rm\bf PD}^n_{[n]}/\simeq$ reflects isomorphisms, is representative and full.
\end{theorem}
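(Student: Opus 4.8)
The plan is to establish the three properties separately. The engine is a single consequence of Poincar\'e duality: the underlying chain complex $C$ of a ${\rm PD}^n$--chain complex has $H^r(C,M)=0$ for every $r>n$ and every coefficient module $M$, since by (\ref{capwithfund}) one has $H^r(C,M)\cong H_{n-r}(C,M^\omega)=H_{n-r}(M^\omega\otimes_\Lambda C)$, and this vanishes for $r>n$ because $C$ is a reduced chain complex. Every realization problem that arises below has its obstruction in a group of this shape, hence zero, so the work lies in organizing the lifts.

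That $r$ reflects isomorphisms needs no duality. If $f\colon X\to Y$ in ${\rm\bf PD}^n$ has $r(f)$ an equivalence in ${\rm\bf PD}^n_{[n]}/\!\simeq$, then applying the functor $\pi_1$ and the functor ${\rm\bf PD}^n_{[n]}\to{\rm\bf H}_0$ forgetting to the underlying chain complex shows that $\pi_1(f)$ is an isomorphism and $\widehat C(f)$ is a chain homotopy equivalence. Hence the equivariant map $\widehat f$ on universal covers induces isomorphisms on integral homology, so $\widehat f$ — a map of simply connected CW--complexes — and therefore $f$ is a homotopy equivalence by Whitehead's theorem; a homotopy inverse $g$ satisfies $\omega_Y=\omega_X\pi_1(f)^{-1}=\omega_X\pi_1(g)$, so it lies in ${\rm\bf PD}^n$ and $f$ is an equivalence there.

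For representativity, let $\overline X=\big((C,f_n,X^{n-1}),\omega,[X],\Delta\big)$ be an object of ${\rm\bf PD}^n_{[n]}$. Lifting the homotopy system $(C,f_n,X^{n-1})$ successively through ${\rm\bf H}^c_{n+1},{\rm\bf H}^c_{n+2},\dots$ is governed by the obstructions of Proposition \ref{objrealize}, which lie in groups $H^m(C,-)$ with $m>n$ and so vanish; I would then assemble the resulting coherent tower of lifts into an object $Z$ of ${\rm\bf CW}_0$ with $\widehat C(Z)\cong C$ and $r(Z)\cong(C,f_n,X^{n-1})$, using the structural results of \cite{Baues1} on the passage from towers of homotopy systems to CW--complexes. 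Since $r$ is monoidal, $r(Z\times Z)=r(Z)\otimes r(Z)$, so the given diagonal may be viewed as a morphism $\Delta\colon r(Z)\to r(Z\times Z)$ in ${\rm\bf H}^c_n$; its obstructions to $\lambda$--realization stage by stage again sit in $H^m(C,-)$ with $m>n$, so iterating Lemma \ref{diagreal0} up the tower produces a cellular diagonal $\Delta_Z\colon Z\to Z\times Z$ with $p_i\Delta_Z\simeq{\rm id}$ and $r(\Delta_Z)\simeq\Delta$ in ${\rm\bf H}^c_n$. Then $(Z,\omega,[X],\Delta_Z)$ is a ${\rm PD}^n$--complex because $\widehat C(Z)=C$ carries the ${\rm PD}^n$--chain complex structure of $\overline X$, and the isomorphism $r(Z)\cong(C,f_n,X^{n-1})$ upgrades to an isomorphism $r(Z,\omega,[X],\Delta_Z)\cong\overline X$ in ${\rm\bf PD}^n_{[n]}/\!\simeq$. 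For fullness, given $(\xi,\eta)\colon r(Y)\to r(X)$ in ${\rm\bf PD}^n_{[n]}/\!\simeq$, I would use Remark \ref{Wallres} to replace $X,Y$ by standard (weakly standard if $n=3$) complexes, so that $Y=Y^n$, $X=X^n$ and $\widehat C(Y)$ is concentrated in degrees $\le n$; the obstruction of Proposition \ref{realize} to lifting $(\xi,\eta)$ to a morphism of homotopy systems of order $n+1$ lies in $H^{n+1}(\widehat C(Y),-)=0$, and since $Y$ and $X$ are $n$--dimensional this lift is a cellular map $\overline f\colon Y\to X$, with $\omega_Y=\omega_X\pi_1(\overline f)$, hence a morphism in ${\rm\bf PD}^n$ realizing $(\xi,\eta)$ (the orientation, fundamental class and diagonal conditions being inherited up to homotopy).

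The hard part is not the vanishing of obstructions but the bookkeeping in the representativity step: one must make precise, with the machinery of \cite{Baues1}, that a coherent system of lifts of a homotopy system — and of a morphism into its self--tensor — through all orders $m\ge n$ assembles to a genuine object and morphism of ${\rm\bf CW}_0$ inducing the prescribed data, and that in the diagonal case the two identities $p_i\Delta_Z\simeq{\rm id}$ can be arranged simultaneously and compatibly up the tower, the single--step version of this being exactly Lemma \ref{diagreal0} together with the transitive action of Proposition \ref{actdist}. A shorter route, avoiding the explicit convergence argument, would be to reduce $\overline X$ at the outset to one whose chain complex is concentrated in degrees $\le n$ using Wall's results \cite{Wall1}, \cite{Wall2}; this, however, demands an analogue for ${\rm\bf PD}^n_{[n]}$ of the standard--form statement of Remark \ref{Wallres}, which is itself of comparable difficulty.
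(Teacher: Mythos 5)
Your proof is correct and follows essentially the same route as the paper's: reflection of isomorphisms via Whitehead's theorem, representativity by repeated application of Propositions \ref{objrealize} and \ref{realize} together with Lemma \ref{diagreal0}, with all obstruction groups vanishing because Poincar\'e duality forces $\widehat{\rm H}^k(X,B)=0$ for $k>n$, and fullness by the same obstruction computation. Your reduction to Wall standard form in the fullness step is a minor packaging choice replacing the paper's implicit passage to the limit through ${\rm\bf H}^c_{k}$ for $k>n+1$; both are fine.
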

\begin{proof}
That $r$ reflects isomorphisms follows from Whitehead's Theorem. 

Poincar{\'e} duality implies $\widehat {\rm H}^{n+1}(Y, \Gamma_n Y) = \widehat {\rm H}^{n+2}(Y, \Gamma_n Y) = 0$, for every object $Y = (Y, \omega_Y, [Y], \Delta_Y)$ in ${\rm\bf PD}^n_{[n]}$. Hence, by Proposition \ref{objrealize}, $Y = \lambda(X)$ for some object $X$ in ${\rm\bf H}_{n+1}^c$, and, by Proposition \ref{realize}, the diagonal $\Delta_Y$ is $\lambda$--realizable. Thus Lemma \ref{diagreal0} guarantees the existence of a diagonal $\Delta_X: X \rightarrow X \otimes X$ in ${\rm\bf H}_{n+1}^c$ with $\lambda \Delta_X = \Delta_Y$. The homomorphism $\omega_Y$ and the fundamental class $[Y]$ determine a homomorphism $\omega_X: \pi_1 X \rightarrow \mathbb Z / 2 \mathbb Z$ and a fundamental class $[X] \in {\rm H}_n(C, \mathbb Z^{\omega})$, such that $X = (X, \omega_X, [X], \Delta_X)$ is an object in ${\rm\bf PD}^n_{[n+1]}$. Inductively, we obtain an object $(X_k, \omega_{X_k}, [X_k], \Delta_{X_k})$ realizing $(Y, \omega_Y, [Y], \Delta_Y)$ in ${\rm\bf PD}^n_{[k]}$ for $k > n$, and in the limit an object $X = (X, \omega_X, [X], \Delta_X)$ in ${\rm\bf PD}^n$ with $r(x) = Y$. 

Proposition \ref{realize} together with the fact that, by Poincar{\'e} duality, $\widehat {\rm H}^k(X, B) = 0$ for $k > n$ and every $\Lambda$--module $B$, implies that $r$ is full.
\end{proof}

Max--Planck--Institut f{\"u}r Mathematik 

Vivatsgasse 7

D--53111 Bonn, Germany

\email{baues@mpim-bonn.mpg.de} 

\email{bleile@mpim-bonn.mpg.de}

\medskip

The second author's home institution is

School of Science and Technology

University of New England 

NSW 2351, Australia

\email{bbleile@une.edu.au}


\begin{thebibliography}{99}

\bibitem{Baues1}
H.J. Baues, \emph{Combinatorial Homotopy and $4$--Dimensional Complexes}, De Gruyter Expositions in Mathematics, (1991).

\bibitem{BauesConduche}
H.J. Baues, D. Conduch{\'e}, \emph{The central series for Peiffer commutators in groups with operators}, J. of Algebra {\bf 133} (1990), 1--34..

\bibitem{Browder1}
W. Browder, \emph{Poincar{\'e} Spaces, Their Normal Fibrations and Surgery}, Inventiones math. {\bf 17} (1972), 191--202.

\bibitem{CavHeg}
A. Cavicchioli, F. Hegenbarth, \emph{On $4$--manifolds with free fundamental group}, Forum Math. {\bf 6} (1994), 415--429.

\bibitem{CavSpag}
A. Cavicchioli, F. Spaggiari, \emph{On the homotopy type of Poincar{\'e} Spaces}, Annali di Math. {\bf 180} (2001), 331-358.

\bibitem{HamblKreck}
I. Hambleton and M. Kreck, \emph{On the Classification of Topological $4$--Manifolds with Finite Fundamental Group}, Mathematische Annalen, {\bf{280}} (1988), 85--104.

\bibitem{HamblKreckTeich}
I. Hambleton, M. Kreck, P. Teichner \emph{Topological $4$--Manifolds with Geometrically $2$--dimensional Fundamental Groups},  arXiv:0802.0995 (2008). 

\bibitem{HegPic}
F. Hegenbarth and S. Piccarreta, \emph{On Poincar{\'e} four--complexes with free fundamental groups}, Hiroshima Math. J. {\bf{32}} (2002), 145--154.

\bibitem{Hendriks}
H. Hendriks, \emph{Obstruction Theory in $3$--Dimensional Topology: An Extension Theorem}, Journal of the London Mathematical Society (2) {\bf{16}} (1977), 160--164.

\bibitem{Hillman2}
J.A. Hillman, \emph{${\rm PD}_4$--complexes with fundamental group a ${\rm PD}_2$--group}, Topology and its Applications {\bf{142}} (2004), 49--60.

\bibitem{Hillman3}
J.A. Hillman, \emph{${\rm PD}_4$--complexes with free fundamental group}, Hiroshima Math. J. {\bf{34}} (2004), 295--306.

\bibitem{Hillman4}
J.A. Hillman, \emph{${\rm PD}_4$--complexes with strongly minimal models}, Topology and its Applications {\bf{153}} (2006), 2413--2424.

\bibitem{Hillman1}
J.A. Hillman, \emph{Strongly minimal ${\rm PD}^4$--complexes}, preprint (2008).

\bibitem{Ranicki1}
A. Ranicki, \emph{The algebraic theory of surgery},  Proc. Lond. Math. Soc. {\bf{40 (3)}} (1980), I. 87--192, II. 193--287.

\bibitem{Ranicki2}
A. Ranicki, \emph{Algebraic Poincar{\'e} cobordism},  Contemp. Math. {\bf{279}} (2001), 213--255.

\bibitem{Spaggiari}
F. Spaggiari, \emph{Four--manifolds with $\pi_1$--free second homotopy},  manuscripta math. {\bf{111}} (2003), 303--320.

\bibitem{Swarup}
G.A. Swarup, \emph{On a Theorem of C.B. Thomas},  Journal of the London Mathematical Society (2) {\bf{8}} (1974), 13--21.

\bibitem{Thomas}
C.B. Thomas, \emph{The Oriented Homotopy Type of Compact $3$--Manifolds}, Proceedings of the London Mathematical Society (3) {\bf{19}} (1969), 31--44.

\bibitem{Teichner}
P. Teichner, \emph{Topological Four--Manifolds with Finite Fundamental Group}, Dissertation, Johannes Gutenberg Uniersit{\"a}t Mainz (1992).

\bibitem{Turaev}
V.G. Turaev, \emph{Three--Dimensional Poincar{\'e} Complexes: Homotopy Classification and Splitting}, Russ. Acad. Sci., Sb., Math. 67 (1990), 261--282.

\bibitem{Wall1}
C.T.C. Wall, \emph{Poincar{\'e} Complexes: I}, Annals of Mathematics 2nd Ser. {\bf{86}} (1967), 213--245.

\bibitem{Wall2}
C.T.C. Wall, \emph{Finiteness Conditions for CW--Complexes}, Annals of Mathematics
2nd Series {\bf{81}} (1965), 56--69.

\bibitem{Wall3}
C.T.C. Wall, \emph{Poincar\'e Duality in Dimension 3}, Proceedings of the Casson Fest, Geometry and Topology Monographs, Vol. 7 (2004), 1--26.

\bibitem{Whitehead1}
J.H.C. Whitehead, \emph{Combinatorial Homotopy II}, ???.

\bibitem{Whitehead2}
J.H.C. Whitehead, \emph{A Certain Exact Sequence}, Annals of Mathematics 2nd Ser., {\bf 52} (1950), 51--110.

\end{thebibliography}
\end{document}